\documentclass[11pt,reqno]{amsart}

\usepackage[a4paper, left=30mm,right=30mm,top=30mm,bottom=30mm,marginpar=25mm]{geometry}
\usepackage[english]{babel}
\usepackage{setspace}
\usepackage{amsmath}
\usepackage{mathtools}
\usepackage[normalem]{ulem}
\usepackage{amsfonts}
\usepackage{amssymb}
\usepackage{fancyhdr}
\usepackage{textcomp}
\usepackage{graphicx} 
\usepackage{float}
\usepackage{listings}
\usepackage{amssymb}
\usepackage{color}

\makeatletter
\def\tagform@#1{\maketag@@@{\normalsize(#1)\@@italiccorr}}
\makeatother

\newtheorem{theorem}{Theorem}
\newtheorem{lemma}[theorem]{Lemma}
\newtheorem{remark}[theorem]{Remark}
\newtheorem{corollary}[theorem]{Corollary}
\newcommand{\ds}{\displaystyle}
\newcommand{\ra}{\rangle}
\newcommand{\la}{\langle}
\newcommand{\lf}{\lambda_{F}}
\newcommand{\lz}{\lambda_{\zeta}}
\newcommand{\lw}{\lambda_{w}}
\newcommand{\lx}{\lambda_{\xi}}
\newcommand{\lv}{\lambda_{v}}
\newcommand{\lvi}{\lambda_{v_i}}
\newcommand{\li}{\lambda_{\eta}}
\newcommand{\fe}{F^\varepsilon}
\newcommand{\ve}{v^\varepsilon}
\newcommand{\ze}{\zeta^\varepsilon}
\newcommand{\we}{w^\varepsilon}
\newcommand{\xe}{\xi^\varepsilon}

\newcommand{\ee}{\eta^\varepsilon}

\newcommand{\bn}{\boldsymbol{\nu}}

\newcommand{\cO}{\mathcal{O}}

\newcommand{\bg}{\boldsymbol{\gamma}}

\newcommand{\ph}{\frac{\partial\Phi^B}{\partial F_{i\alpha}}(F)}
\newcommand{\phl}{\frac{\partial\Phi^B}{\partial F_{i\alpha}}(\lambda_F)}
\newcommand{\phb}{\frac{\partial\Phi^B}{\partial F_{i\alpha}}(\bar{F})}
\newcommand{\phv}{\frac{\partial\Phi^B}{\partial F_{i\alpha}}(F)v_i}

\newcommand{\phbv}{\frac{\partial\Phi^B}{\partial F_{i\alpha}}(\bar{F})\bar{v}_i}

\newcommand{\cof}{\hbox{cof}\,}
\newcommand{\dt}{\hbox{det}\,}
\newcommand{\phz}{\frac{\partial\Phi^B}{\partial F_{i\alpha}}(F^0)}
\newcommand{\pge}{\frac{\partial \hat{e}}{\partial \xi^B}(\xi,\eta)}
\newcommand{\phzj}{\frac{\partial\Phi^B}{\partial F_{i\alpha}}(F^{j-1})}
\newcommand{\pgej}{\frac{\partial \hat{e}}{\partial \xi^B}(\xi^j,\eta^j)}

\newcommand{\tcr}{}
\newcommand{\tcb}{}
\definecolor{listinggray}{gray}{0.9}
\definecolor{lbcolor}{rgb}{0.9,0.9,0.9}

\def\del{\partial}
\allowdisplaybreaks
\raggedbottom
\begin{document}
	
	\numberwithin{equation}{section}
	
	\title[A discrete variational scheme in polyconvex thermoelasticity]
	{A discrete variational scheme for isentropic processes in polyconvex thermoelasticity}
	
	\author[C. Christoforou]{Cleopatra Christoforou}
	\address[Cleopatra Christoforou]{Department of Mathematics and Statistics,
		University of Cyprus, Nicosia 1678, Cyprus.}
	\email{christoforou.cleopatra@ucy.ac.cy} 
	\author[M. Galanopoulou]{Myrto Galanopoulou}
	\address[Myrto Galanopoulou]{Computer, Electrical, Mathematical Sciences \& Engineering Division, King Abdullah University of Science and Technology (KAUST), Thuwal, Saudi Arabia.}
	\email{myrtomaria.galanopoulou@kaust.edu.sa}
	\author[A. E. Tzavaras]{Athanasios E. Tzavaras}
	\address[Athanasios E. Tzavaras]{Computer, Electrical, Mathematical Sciences \& Engineering Division, King Abdullah University of Science and Technology (KAUST), Thuwal, Saudi Arabia.}
	\email{athanasios.tzavaras@kaust.edu.sa}

	\begin{abstract}
		We propose a variational scheme for the construction of isentropic processes of the equations of
		adiabatic thermoelasticity with polyconvex internal energy. The scheme hinges on the embedding of the equations of adiabatic polyconvex 
		thermoelasticity  into a symmetrizable hyperbolic system. We establish existence of minimizers for an associated  minimization Theorem
		and construct measure-valued solutions that dissipate the total energy. We prove that the scheme converges when the limiting solution is smooth.
	\end{abstract}

	\maketitle
	
		\section{Introduction}\label{3:SEC1}
We consider the system of adiabatic thermoelasticity,
\begin{align}
\begin{split}
\label{e:adiabatic_thermoelasticity}
\partial_t F_{i\alpha}-\partial_{\alpha}v_i&=0 \\ 
\partial_t v_i-\partial_{\alpha}\Sigma_{i\alpha}&=0\\
\partial_t \eta&=\frac{r}{\theta}\\
\partial_t\left(\frac{1}{2}|v|^2+e\right)-\partial_{\alpha}(\Sigma_{i\alpha}v_i)&=r ,
\end{split}
\end{align}
describing the evolution of a thermomechanical process $\big ( y(x,t) , \eta(x,t) \big) \in \mathbb{R}^3\times\mathbb{R}^+$ \tcb{in Lagrangian coordinates}
with the spatial variable $x\in\mathbb{R}^3$ and time $t\in\mathbb{R}^+$. A solution to (\ref{e:adiabatic_thermoelasticity}) consists of the deformation gradient
$F = \nabla y  \in \mathbb{M}^{3\times3}$, the velocity $v = \partial_t y \in\mathbb{R}^3$ and the specific entropy
$\eta$. The first equation is a compatibility relation, the
second describes the balance of linear momentum, while the fourth stands for the balance of energy.
One appends to \eqref{e:adiabatic_thermoelasticity} the constraint 
\begin{equation}
\label{constraint}
\partial_{\alpha}F_{i\beta}=\partial_{\beta}F_{i\alpha}, \qquad i,\alpha,\beta=1,2,3 \, ,
\end{equation}
which guarantees that $F$ is indeed a gradient. Note that \eqref{constraint} is an involution, that is it is  propagated from the initial data to the solution via \eqref{e:adiabatic_thermoelasticity}$_1$.

The remaining variables in (\ref{e:adiabatic_thermoelasticity}) are the Piola-Kirchhoff stress  $\Sigma_{i\alpha }$, the internal energy $e$, 
and the radiative heat supply $r$.  
\tcb{ In the theory of adiabatic thermoelasticity, 
	the referential heat flux $Q_\alpha = 0$ and it does not appear in the equations \eqref{e:adiabatic_thermoelasticity}$_{3,4}$.}
For simplicity we have normalized the reference density $\rho_0 =1$.
The balance of entropy \eqref{e:adiabatic_thermoelasticity}$_3$ holds identically as an equality for strong solutions; by contrast, for weak solutions it 
is replaced by the Clausius-Duhem inequality \cite{CN1963,TN1992,MR3468916} and serves as an admissibility criterion.
The system is closed through constitutive 
relations which, for smooth processes, are consistent with the Clausius-Duhem inequality and describe the material response. 
For thermoelastic materials under adiabatic conditions,  the constitutive theory  is determined from the thermodynamic potential of the internal energy $e(F,\eta)$, via 
the relations
\begin{align}
\label{e:constitutive_functions}
e=e(F,\eta),\quad
\Sigma=\frac{\partial e}{\partial F},\quad
\theta=\frac{\partial e}{\partial\eta}
\end{align}
for the stress $\Sigma$ and the temperature $\theta$, respectively.  We refer the reader to \cite{CN1963,TN1992} for a detailed derivation of the theory of adiabatic thermoelasticity
and its relation with more elaborate constitutive theories.

Our objective with this work is to construct a variational approximation scheme in the spirit of \cite{MR1831179}, extending that analysis to
the full system of adiabatic thermoelasticity \eqref{e:adiabatic_thermoelasticity} under a general class of constitutive laws. We adopt the hypothesis of polyconvexity 
which for thermoelasticity asserts that the free energy $e(F,\eta)$ factorizes as	
\begin{equation*}
e(F,\eta)=\hat{e}(\Phi(F),\eta),\quad \text{where}\quad \Phi(F)=(F,\mathrm{cof}F,\det F)\in\mathbb{R}^{19}
\end{equation*}
and $\hat e : \mathbb{R}^{19} \times \mathbb{R^+} \to \mathbb{R}$ is a strictly convex function
\begin{equation*}
\nabla^2_{(\xi,\eta)} \hat e > 0.
\end{equation*}
Comparing this definition with the one for the isothermal problem, it is evident that
the properties of weak continuity or regularity of cofactors and determinants
and their derivatives naturally follow from the isothermal regime. 
The polyconvexity assumption leads to reformulate system \eqref{e:adiabatic_thermoelasticity},~\eqref{constraint}
and write it instead in the variables $v\in\mathbb{R}^3,$ $\xi\in\mathbb{R}^{19}$ and $\eta\in\mathbb{R}$ 
regarding the vector $\Phi(F)=(F,\mathrm{cof}F,\det F)$ as a new independent variable. We perform this calculation
in detail in Section \ref{2:SEC_PRELIMINARIES}, and this is a simple variant of the extension in the variables
$(v,\xi,\theta),$ which can be found in \cite{CGT2017}, as $\theta$ and $\eta$ are connected via a Legendre transform.
For the latter, we refer the reader to \cite[Appendix B]{CGT2018}. The resulting augmented system is
\begin{align}
\label{intro:augmented_system}
\partial_t \xi^B-\partial_{\alpha}\left(\phv \right)&=0 \\ 
\partial_t v_i-\partial_{\alpha}\left(\frac{\partial \hat{e}}{\partial\xi^B}(\xi,\eta)\ph\right)&=0\\
\partial_t \eta &=\frac{r}{\hat{\theta}(\xi,\eta)}\\
\partial_t \left(\frac{1}{2}|v|^2+\hat{e}(\xi,\eta)\right)-\partial_{\alpha}\left(\frac{\partial\hat{e}}{\partial\xi^B}(\xi,\eta)\ph v_i\right)&= r
\qquad\qquad\qquad (B=1,\dots,19)
\end{align}
where $\hat{\theta} = \partial\hat{e}(\xi,\eta)/\partial\eta.$
System \eqref{intro:augmented_system} is symmetrizable and hyperbolic, and possesses a convex entropy.
In addition, \eqref{intro:augmented_system} preserves the null-Lagrangian structure and 
	the differential constraints \eqref{constraint} (see Section \ref{subsection 2}).

We work in the periodic domain $\mathbb{T}^3$ (in space) and construct a discrete in time, variational approximation scheme.
The scheme is implicit-explicit and decreases the energy. It depends on solving the following minimization problem: 
Given $h>0$ and initial data $U^0=(v^0,\xi^0,\eta^0)\in (L^2\times(L^p\times L^q\times L^{\rho})\times L^{\ell})(\mathbb{R}^{23}),$
there exists a unique $U=(v,\xi,\eta)\in (L^2\times(L^p\times L^q\times L^{\rho})\times L^{\ell})(\mathbb{R}^{23})$ 
that minimizes the functional
\begin{align}
\label{e:Functional}
J(v,\xi,\eta)=\int_{\mathbb{T}^3} \frac{1}{2}|v-v^0|^2+\hat{e}(\xi,\eta)\:dx
\end{align}
over the weakly closed affine subspace of $(v,\xi,\eta)\in L^2\times(L^p\times L^q\times L^{\rho})\times L^{\ell}$ such that
\begin{align}
\label{e:DISCRET_C}
\begin{split}
\frac{(\xi-\xi^0)^B}{h}&=\partial_{\alpha}\left(\frac{\partial\Phi^B}{\partial F_{i\alpha}}(F^0)v_i\right) \\ 
\frac{\eta-\eta^0}{h}&=\frac{r}{\hat{\theta}(\xi^0,\eta^0)}
\end{split}
\end{align}
hold in the sense of distributions. The minimizer satisfies the Euler-Lagrange equations
\begin{align}
\label{e:Euler-Lagrange}
\frac{(v-v^0)_i}{h}=\partial_{\alpha}\left(\frac{\partial \hat{e}}{\partial\xi^B}(\xi,\eta)\frac{\partial\Phi^B}{\partial F_{i\alpha}}(F^0)\right)
\end{align}
and the solution operator $S_h:U^0\mapsto U$ defined by the equations \eqref{e:DISCRET_C},~\eqref{e:Euler-Lagrange} preserves 
the constraints \eqref{constraint} which guarantee that at each time-step $j=0,1,2\dots$ the scheme produces iterations $(v^j,\xi^j,\eta^j)$ where $F^j$ 
is a deformation gradient. 
Moreover, these iterations satisfy the balance of entropy as an \textit{identity} while the energy equation
holds as an \textit{inequality}; those are relations \eqref{e:DISCRET_C} and \eqref{dicrete_energy_ineq} respectively. 

As a result, we solve a variant of the isentropic problem and the emerging solution dissipates the total energy.
On a mathematical side, we prove that the resulting variational approximation scheme gives
rise to measure-valued solutions that dissipate (instead of conserving) the energy identity, see Theorem \ref{mainconv}.
Our conclusion is similar in spirit with the findings of  \cite{Cavalletti} concerning the compressible Euler equations in Eulerian
coordinates. It is conceivable that using a different thermodynamic potential might lead to 
a variational scheme that balances the energy and increases the entropy, but this is at present an open problem.

We note that considering an energy inequality
\begin{align}
\label{e:adiabatic_thermoelasticity_energy}
\partial_t\left(\frac{1}{2}|v|^2+e\right)-\partial_{\alpha}(\Sigma_{i\alpha}v_i)\leq r
\end{align}
in lieu of an energy balance equation, it has attracted some attention in the mechanics literature. The balance of energy holds 
for thermodynamic processes that obey the first law of thermodynamics in a strong sense: For cyclic processes,
the work is universally proportional to heat; this is Joule's relation. As an alternative, a weaker form of Joule's law holding as an
inequality leads to an energy inequality rather than the balanced equation \cite{MR848765_Ser,MR589945,MR848765_Sil}. A side note on
the work of Fosdick and Serrin \cite{Fosdick_Serrin} yields that the assumption of a strong first law is not necessary for validating 
the classical statements of the second law of thermodynamics such as the Clausius-Duhem inequality and a weaker formulation, namely 
an energy inequality, suffices. We also refer the reader to  Serrin \cite{MR1420151}, where the author regards continuous
media as thermodynamical systems and derives the energy equation and the Clausius-Duhem inequality based solely on suitable
constitutive assumptions and the basic cyclic laws of thermodynamics; then assuming a weak first law, he proves the existence of 
the entropy and energy functions, validates the Clausius-Duhem inequality and derives an energy inequality.

We organize this paper as follows: In Section \ref{2:SEC_PRELIMINARIES}, we extend system \eqref{e:adiabatic_thermoelasticity}, \eqref{constraint} into a
symmetrizable, hyperbolic system, exploiting the polyconvex structure of the problem. In Section \ref{3:SEC3}, we give an outline
of the variational scheme and its main properties (for analogous studies of the isothermal problem see for instance 
\cite{MR1831179,MR3148623}, while other related work includes \cite{Cavalletti,Westdickenberg2017}).
We state and prove the minimization Theorem \ref{minimization} in Section \ref{3:SEC4}  and as a consequence of that, in Lemma \ref{dissipation}
we show that the scheme dissipates the energy which in turn, leads to the stability estimate \eqref{uniform_energy_estimate6}.
In Section \ref{3:SEC5}, we prove our main result, that the variational scheme generates a dissipative measure-valued solution
for isentropic processes in polyconvex thermoelasticity. Finally, in Section \ref{3:SEC6}, we show convergence of the solution generated by the
scheme, as the time step tends to zero, provided that the limit solution is smooth.

%


%
%

\section{Preliminaries}\label{2:SEC_PRELIMINARIES}

\subsection{Symmetrizable systems of conservation laws}
Systems of conservation laws describing the evolution of  a function $U : \mathbb{R}^d \times \mathbb{R}^+ \to \mathbb{R}^n$ have the form
\begin{align}
\label{syscl}
\partial_t A(U) +\partial_{\alpha}f_{\alpha}(U)=0
\end{align}
where $A, f_\alpha : \cO \subset  \mathbb{R}^n \to \mathbb{R}^n$, $\alpha =1, \dots, d$,  are smooth functions describing fluxes, 
$A(U)$ is globally  invertible (on the domain of definition $\cO \ni U$) and $\nabla A(U)$ is nonsingular.
The system \eqref{syscl} is endowed with an entropy - entropy flux pair $\eta, q_\alpha :   \mathbb{R}^n \to \mathbb{R}$ if any smooth solution
$U(x,t) \in C^1$ of \eqref{syscl}
satisfies the additional conservation law
\begin{align}
\label{entrpair}
\partial_t \eta(U) +\partial_{\alpha}q_{\alpha}(U) = 0 \, .
\end{align}

Existence of an entropy pair $\eta - q$ implies that there is a multiplier $G : \mathbb{R}^n \to \mathbb{R}^n$ such that $G = G(U)$ satisfies
\begin{equation}
\label{entropyG}
\begin{aligned}
G \cdot \nabla A &= \nabla \eta 
\\
G \cdot \nabla f_\alpha  &= \nabla q_\alpha .
\end{aligned}
\end{equation}
In turn, these are respectively equivalent to
\begin{equation}
\label{entropystruc}
\begin{aligned}
\nabla G^T  \nabla A  &= \nabla A^T \nabla G
\\
\nabla G^T  \nabla f_\alpha  &= {\nabla f_\alpha}^T \nabla G\,.
\end{aligned}
\end{equation}

Suppose now that \eqref{syscl} is endowed with a smooth entropy pair $\eta - q$, that is for some multiplier $G(U)$ 
relations \eqref{entropystruc} are satisfied. We rewrite \eqref{syscl} for smooth solutions, in the form of an equivalent system with
symmetric coefficients:
\begin{equation}
(\nabla G^T  \nabla A ) \del_t U + ( \nabla G^T  \nabla f_\alpha ) \del_\alpha U = 0.
\end{equation}
The hypothesis
$$
\nabla G^T  \nabla A  > 0
$$
guarantees that the system \eqref{syscl} is symmetrizable, it has real eigenvalues and is hyperbolic. Moreover, it induces a relative entropy identity
and a notion of stability for the system \cite{MR0285799,christoforou2016relative}. 
Using \eqref{entrpair},  it can be equivalently expressed in the form
\begin{equation}
\label{symm}
\nabla^2 \eta - \sum_{k=1} G^k  \nabla^2 A^k   > 0 \, .
\end{equation}
For weak solutions the entropy pair $\eta - q$ induces a notion of admissibility. The function $U \in L^1_{loc}$ is an entropy weak solution if
it satisfies \eqref{syscl}
and the inequality
\begin{align}
\label{entrsoln}
\partial_t \eta(U) +\partial_{\alpha}q_{\alpha}(U) \le 0
\end{align}
both in the sense of distributions. 

Weaker notions for solutions have been employed in the literature associated with
averaged forms of the system and the study of oscillations and concentrations. The notion of dissipative measure-valued 
solution is intended to describe the equations satisfied by weak limits emerging out of suitable $L^p$ bounds ($1<p<\infty$):
A dissipative measure-valued solution $(U,\bn,\bg)$ with concentration consists of a function
$U\in L^{\infty}([0,T];L^p(\mathbb{T}^d)),$ a Young measure $\bn=\bn_{(x,t)\in \mathbb{T}^d\times[0,T]}$ and a non-negative
Radon measure $\bg\in\mathbb{M}^+(\mathbb{T}^d\times[0,T])$ such that $U(x,t)=\left\la\bn,\lambda\right\ra$ while
$(\bn,\bg)$ satisfy
\begin{equation}
\label{weakform1}
\begin{aligned}
\int_0^T\int\left\la\bn,A_i(\lambda)\right\ra\partial_t\phi_i dx\:dt
+\int_0^T\int\left\la\bn, f_{\alpha i}(\lambda)\right\ra\partial_{\alpha}\phi_i dx\:dt
+\int\left\la\bn_0,A_i(\lambda)\right\ra \phi_i dx=0,
\end{aligned}
\end{equation}
for $\phi(x,t)\in C^1_c([0,T);C^1(\mathbb{T}^d;\mathbb{R}^n))$, and 
\begin{align}
\label{general_cl_dissipative_entropy}
\int_0^T\int\varphi^{\prime}\left(\left\la\bn,\eta(\lambda)\right\ra dx\:dt +\bg(dx\:dt) \right)
+\int\varphi(0)\left(\left\la\bn_0,\eta(\lambda)\right\ra dx +\bg_0(dx\:dt) \right)\geq 0
\end{align}
for $\varphi(t)\in C^1_c([0,T))$ with $\varphi\geq 0$. 
\tcr{Note that  in \eqref{weakform1} $f_{\alpha i}$ stands for the $i$-th coordinate of the vector function $f_{\alpha}$.}

The solution that we will construct for the system of  adiabatic thermoelasticity is analogous to the one just described, but an integrated averaged 
energy inequality with concentration  will substitute \eqref{general_cl_dissipative_entropy} while the averaged law of entropy will hold as an identity. 
This is due to structural properties of the problem and its relation to the  minimization scheme. Finally, we note that uniqueness of smooth solutions
in this class of dissipative measure-valued solutions for adiabatic polyconvex thermoelasticity was proven in \cite{CGT2018}.

\subsection{The augmented system of adiabatic polyconvex thermoelasticity}\label{subsection 2}

The system of adiabatic thermoelasticity consists of the equations \eqref{e:adiabatic_thermoelasticity}, \eqref{constraint} with the
constitutive relations \eqref{e:constitutive_functions}. Various thermodynamic potentials can be employed to determine thermodynamic theories (see \cite{Callen}); 
here, we focus on a theory with prime variables $(F, \eta)$ determined by the internal energy $e (F, \eta)$.

The system \eqref{e:adiabatic_thermoelasticity} fits into the general theory of conservation laws in two ways: 

One perspective proceeds following the continuum mechanics derivation of the theory of thermoelasticity. For smooth processes, $(y, \eta)(x,t) $, with 
$v= y_t$, $F = \nabla y$,  is viewed 
as a solution of the equations \eqref{e:adiabatic_thermoelasticity}$_{1,2,4}$, \eqref{constraint},  depicting conservation of momentum and energy. 
Then \eqref{e:adiabatic_thermoelasticity}$_{3}$ stands for the conservation of entropy. It is an additional conservation law, and a consequence of
the requirement of consistency of thermoelasticity with the Clausius-Duhem inequality \cite{CN1963}. Accordingly, when dealing with non-smooth processes 
$(y, \eta)(x,t) $, equation \eqref{e:adiabatic_thermoelasticity}$_{3}$ is replaced by an inequality 
\begin{equation}
\label{entropyeqn1}
\partial_t \eta  \ge  \frac{r}{\theta} 
\end{equation}
intended as an admissibility criterion for weak solutions which  motivates the concept of entropy
solution widespread in the theory of conservation laws.

To fit this perspective into the general form of system \eqref{syscl}, we set
$$
U = (F, v, \eta) \quad A(U) = \big ( F, v, \tfrac{1}{2} |v|^2 + e (F, \eta) \big )
$$
and note that the condition $\theta = \frac{\del e}{\del \eta} > 0$ guarantees that $A(U)$ is invertible and $\nabla A(U)$ is nonsingular.
By construction of the theory, there is a multiplier $G(U)$ that leads to the entropy pair $\check \eta (U)$-$\check q_\alpha (U)$ with
$$
\begin{aligned}
\check \eta (U) :=   - \eta,  \quad  \check q_\alpha (U) := 0, \quad
G(U) = \tfrac{1}{\frac{\del e}{\del \eta}(F,\eta)} \Big ( \frac{\del e}{\del F}(F,\eta)  , v , -1 \Big ) \, .
\end{aligned}
$$
A computation shows
\begin{equation}
\label{hyperbolic(F,eta)}
\nabla^2 \check \eta (U)  - \sum_{k=1} G^k(U)  \nabla^2 A^k(U) 
=
\frac{1}{\frac{\del e}{\del \eta} }
\begin{pmatrix}
e_{FF} & 0 & e_{F \eta}
\\
0 &  1 & 0
\\
e_{F \eta} & 0 & e_{\eta \eta}
\end{pmatrix}
\end{equation}
and thus the condition of symmetrizability \eqref{symm} amounts to $e(F, \eta)$ strictly convex and $\frac{\del e}{\del \eta} > 0$.
The requirement of convexity is too stringent to encompass a large class of materials and relaxing it, it is discussed below. Nevertheless,
it should be noted that convexity of $e(F,\eta)$ would suffice to apply the standard theory of conservation laws to \eqref{e:adiabatic_thermoelasticity}.
In that case the entropy admissibility inequality \eqref{entrsoln} would amount to the growth of the physical entropy \eqref{entropyeqn1}.

A second way to fit \eqref{e:adiabatic_thermoelasticity} to the general theory of conservation laws amounts
to view the process $(y, \eta)(x,t) $, through $v= y_t$, $F = \nabla y$, as solving \eqref{e:adiabatic_thermoelasticity}$_{1,2,3}$
and  \eqref{e:adiabatic_thermoelasticity}$_{4}$
as an additional conservation law. This is achieved by setting $A(U) = U = (F, v, \eta)$ and noting that the entropy - entropy flux pair
$$
\check \eta (U) = \tfrac{1}{2} |v|^2 + e(F,\eta) \quad \check q_\alpha (U) = - \Sigma_{i\alpha} v_i
$$
satisfies \eqref{e:adiabatic_thermoelasticity}$_{4}$.  Again convexity of $e(F,\eta)$ suffices to guarantee \eqref{symm} and apply the standard theory of conservation laws 
to \eqref{e:adiabatic_thermoelasticity}. In this case, the entropy inequality \eqref{entrsoln} would imply that a weak solution satisfies
the energy inequality
\begin{equation}
\label{entropyeqn2}
\partial_t\left(\frac{1}{2}|v|^2+e (F, \eta) \right)-\partial_{\alpha}(\Sigma_{i\alpha}v_i) \le r.
\end{equation}
An energy inequality contravenes the traditional view of the first law of thermodynamics, However,  in the mechanics literature \cite{Fosdick_Serrin,MR1420151,MR589945,MR848765_Ser,MR848765_Sil} the role of an energy inequality in the derivation of thermodynamics has been extensively studied, and it was notably established
by  Serrin \cite{MR1420151} that, for a wide class of constitutive relations,  postulating a weak form of the first law, it
still leads to the existence of an energy and entropy function and is consistent with the second law in the form
of the Clausius-Duhem inequality.

Next, we turn to the assumptions on the internal energy $e(F, \eta)$.
The convexity of internal energy in $\eta$ is intrinsic in the derivation of thermodynamics, see \cite{CN1959}, \cite[Ch I]{Evans}.
If we impose that $e(F,\eta)$ is convex and coercive for $\eta>0:$
\begin{equation*}
\frac{\del^2 e}{\del \eta^2}  (F, \eta) > 0 ,  \qquad \lim_{\eta \to \infty}  \frac{ e(F, \eta)}{\eta} = \infty
\end{equation*}
and that the temperature is zero when the entropy is zero, namely $\theta(F, 0)=0$, then we have the implication
\begin{equation*}
\theta(F,\eta)=\frac{\partial e(F,\eta)}{\partial\eta}>0.
\end{equation*}

By contrast,  for thermoelastic materials convexity of the stored energy with respect to $F$ 
together with the requirement of frame indifference are in general incompatible with the hypothesis that the stored energy becomes infinite in the limit as $\det F \to 0$,
which is in turn necessary to avoid interpenetration of matter \cite{MR3468916,TN1992,CN1963}. 
To relax the requirement of convexity, the hypotheses of  polyconvexity, quasiconvexity or rank-1 convexity are often employed; 
here, we take up the hypothesis of polyconvexity introduced by Ball \cite{MR0475169} in the theory of elasticity and connected to the notion of null-Lagrangians \cite{BCO81}.

The hypothesis of polyconvexity is quite useful in dynamic elasticity  \cite{MR1651340,MR1831179,MR3468916,Wagner2009,LT2006,MR1831175,MR3232713,Duan2017}),
as it leads to embedding the (isothermal) elasticity system to an augmented symmetric, hyperbolic system. It was recently realized that the system of adiabatic
thermoelasticity can be extended into an augmented symmetric hyperbolic system \cite{CGT2017}, and this property is employed here
to construct a variational approximation scheme for \eqref{e:adiabatic_thermoelasticity}.

To properly formulate the problem, we introduce a variant of polyconvexity, along the lines of  \cite{CGT2017,CGT2018}, 
according to which the free energy $e(F,\eta)$ factorizes 	
\begin{equation}
\label{e:polyconvex_fe}
e (F, \eta) =  \hat{e}(\Phi(F),\eta) \, ,
\end{equation}
where 
$$
\Phi(F)=(F,\mathrm{cof}F,\det F)  \; \in \, \mathbb{M}^{3\times3}\times\mathbb{M}^{3\times3}\times\mathbb{R}(\simeq\mathbb{R}^{19})
$$
is the vector of null-Lagrangians. We require $\hat{e}$ to be strictly convex with respect to the variables $(\xi, \eta) \in\mathbb{R}^{19}\times\mathbb{R}$,
that is 
\begin{equation}
\label{e:conditions_polyconvex}
\nabla^2_{(\xi,\eta)} \hat e > 0.
\end{equation}
We call the assumptions \eqref{e:polyconvex_fe}-\eqref{e:conditions_polyconvex} polyconvexity in the non-isothermal context.

Taking advantage of the null-Lagrangian structure and  the polyconvexity condition, system \eqref{e:adiabatic_thermoelasticity} is embedded
into an augmented symmetrizable system, see \cite{CGT2017}, as follows:
For $d=3,$  the cofactor matrix $\mathrm{cof}F\in\mathbb{M}^{3\times3}$ and 
determinant $\det F\in\mathbb{R}$ of $F$ are defined as
\begin{align}
\label{cofactor_d=3}
(\mathrm{cof}F)_{i\alpha} &=
\frac{1}{2}\epsilon_{ijk}\epsilon_{\alpha\beta\gamma}
F_{j\beta}F_{k\gamma},\\
\label{det_d=3}
\dt F &=\frac{1}{6}\epsilon_{ijk}\epsilon_{\alpha\beta\gamma}
F_{i\alpha}F_{j\beta}F_{k\gamma} = \frac{1}{3}(\mathrm{cof}F)_{i \alpha}F_{i \alpha}.
\end{align}
The vector of null-Lagrangians $\Phi^B(F)$, $B = 1,\dots,19$, satisfies the Euler-Lagrange equations
\begin{equation}
\label{Euler-Lagrange}
\partial_{\alpha}\left(\frac{\del \Phi^B}{\del F_{i\alpha}} (\nabla y ) \right)=0. 
\end{equation}
Combining \eqref{Euler-Lagrange} with \eqref{e:adiabatic_thermoelasticity}$_1$ allows us to write the equation
\begin{align}
\label{dtPhitotal}
\partial_t \Phi^B(F)=\partial_{\alpha}\left(\phv\right),
\end{align}
which induces two additional conservation laws (alongside with \eqref{e:adiabatic_thermoelasticity}$_1$) (cf. \cite{MR1651340})
\begin{align}
\label{transportQIN}
\begin{split}
\partial_t\det F&=\partial_{\alpha}\bigl((\mathrm{cof} F)_{i\alpha}v_i\bigr)\\
\partial_t(\mathrm{cof} F)_{k\gamma}&=\partial_{\alpha}(\epsilon_{ijk}\epsilon_{\alpha\beta\gamma}F_{j\beta}v_i).
\end{split}
\end{align}
Observe that because of \eqref{e:polyconvex_fe}, the stress tensor $\Sigma$, given by \eqref{e:constitutive_functions}$_2,$
can be written with respect to $\Phi(F):$
\begin{align}
\label{sigma-Phi}
\Sigma_{i\alpha}=\frac{\partial e}{\partial F_{i\alpha}}(F,\eta)=\frac{\partial }{\partial F_{i\alpha}}\left(\hat{e}(\Phi(F),\eta)\right)
=\frac{\partial \hat{e}}{\partial\xi^B}(\Phi(F),\eta)\ph.
\end{align}
The same holds for the temperature function $\theta$ since it can also be expressed as function of $\hat{e}$ 
because the first nine components of $\Phi(F)$ are the components of the matrix $F.$  We then have
\begin{align}
\label{theta-Phi}
\theta(F,\eta)=\frac{\partial e}{\partial\eta}(F,\eta)=\frac{\partial\hat{e}}{\partial\eta}(\Phi(F),\eta)=\hat{\theta}(\Phi(F),\eta),
\quad\text{where}\quad\hat{\theta}(\xi,\eta):=\frac{\partial\hat{e}}{\partial\eta}(\xi,\eta).
\end{align}

By virtue of the identities  \eqref{sigma-Phi} and  \eqref{theta-Phi}, the system of adiabatic thermoelasticity \eqref{e:adiabatic_thermoelasticity}-\eqref{e:constitutive_functions}
can be expressed for smooth solutions in the form
\begin{align}
\label{e:adiabatic_thermoelasticity_Phi}
\begin{split}
\partial_t \Phi^B(F)-\partial_{\alpha}\left(\phv \right)&=0 \\ 
\partial_t v_i-\partial_{\alpha}\left(\frac{\partial \hat{e}}{\partial\xi^B}(\Phi(F),\eta)\ph\right)&=0\\
\partial_t \eta &=\frac{r}{\hat{\theta}(\Phi(F),\eta)}\\
\partial_t \left(\frac{1}{2}|v|^2+\hat{e}(\Phi(F),\eta)\right)-\partial_{\alpha}\left(\frac{\partial\hat{e}}{\partial\xi^B}(\Phi(F),\eta)\ph v_i\right)&= r 
\end{split}
\end{align}
subject to the constraint \eqref{constraint}.

The system  \eqref{e:adiabatic_thermoelasticity_Phi} may be written in terms of the extended variable
$\xi:=(F,\zeta,w)\in\mathbb{M}^{3\times3}\times\mathbb{M}^{3\times3}\times\mathbb{R}(\simeq\mathbb{R}^{19})$
in the following manner
\begin{align}
\label{e:adiabatic_thermoelasticity_augmented}
\begin{split}
\partial_t \xi^B-\partial_{\alpha}\left(\phv \right)&=0 \\ 
\partial_t v_i-\partial_{\alpha}\left(\frac{\partial \hat{e}}{\partial\xi^B}(\xi,\eta)\ph\right)&=0\\
\partial_t \eta &=\frac{r}{\hat{\theta}(\xi,\eta)}\\
\partial_t \left(\frac{1}{2}|v|^2+\hat{e}(\xi,\eta)\right)-\partial_{\alpha}\left(\frac{\partial\hat{e}}{\partial\xi^B}(\xi,\eta)\ph v_i\right)&= r \, 
\end{split}
\end{align}
subject again to the constraint \eqref{constraint}.
The functions $\hat{e}$, $\hat{\theta}$ and $\hat{\Sigma}$ are connected through the formulas
\begin{equation}
\label{ext-constitutive}
\hat{\Sigma}^B (\xi, \eta) := \frac{\partial \hat{e}}{\partial\xi^B} (\xi, \eta) \, , \quad  \hat{\theta}(\xi,\eta):=\frac{\partial\hat{e}}{\partial\eta}(\xi,\eta).
\end{equation}
A computation as in \eqref{hyperbolic(F,eta)} shows that if $\hat{e}(\xi,\eta)$ is strictly convex, 
the system of conservation laws \eqref{e:adiabatic_thermoelasticity_augmented}$_{1,2,4}$ is symmetrizable
and hyperbolic. This enlarged system has the following properties \cite{CGT2017}:
\begin{itemize}
	\item[(i)]  The extension preserves the constraint \eqref{constraint}: if $F(\cdot,t=0)$ is a deformation gradient, then $F(\cdot,t)$ remains a gradient for all times. 
	\item[(ii)] In addition,
	$$
	\xi(\cdot,t=0)=\Phi(F(\cdot,t=0)) \quad \mbox{implies} \quad \xi(\cdot,t)=\Phi(F(\cdot,t)) \; \; \forall t.
	$$
\end{itemize}
In other words, we can regard \eqref{e:adiabatic_thermoelasticity} as a constrained evolution of the 
augmented problem \eqref{e:adiabatic_thermoelasticity_augmented}. 
Furthermore, system \eqref{e:adiabatic_thermoelasticity_augmented}$_{1,2,4}$ is endowed with the additional conservation law
of entropy  \eqref{e:adiabatic_thermoelasticity_augmented}$_{3}$ for smooth solutions. To see this, we
multiply \eqref{e:adiabatic_thermoelasticity_augmented}$_{1,2,4}$ by
$$ G(U) = \tfrac{1}{\hat{\theta}(\xi,\eta)}\Big( \hat{\Sigma}^B (\xi, \eta), v, -1\Big )$$
and use \eqref{ext-constitutive} and \eqref{Euler-Lagrange} to obtain \eqref{e:adiabatic_thermoelasticity_augmented}$_{3}$.

\section{The discrete scheme}\label{3:SEC3}

We consider a time-discretized variant of the system \eqref{e:adiabatic_thermoelasticity_augmented}:
Given $(v^0,\xi^0,\eta^0)$ and $h>0,$ consider the discrete equations
\begin{align}
\label{discrete_thermoelasticity}
\begin{split}
\frac{(\xi-\xi^0)^B}{h}&=\partial_{\alpha}\left(\phz v_i\right)\\
\frac{v_i-v_i^0}{h}&=\partial_{\alpha}\left(\pge\phz\right)\\
\frac{\eta-\eta^0}{h}&=\frac{r}{\hat{\theta}(\xi^0,\eta^0)}.
\end{split}
\end{align}

We next give an outline of the main properties of the scheme, avoiding technical details that will be addressed
in the following Section. The iterates $(v, \xi, \eta)$ are constructed as the unique solutions
of the minimization problem
\begin{align}
\label{minproblem}
\underset{ \scriptsize
	\begin{aligned}
	\frac{(\xi-\xi^0)^B}{h} &= \partial_{\alpha}\left(\phz v_i\right)  
	\\
	\frac{\eta-\eta^0}{h} &= \frac{r}{\hat{\theta}(\xi^0,\eta^0)} 
	\end{aligned} 
}{{\rm min} } 
\Big(  \int \frac{1}{2}|v-v^0|^2+\hat{e}(\xi,\eta)\:dx \Big ).
\end{align}
Note that the constraints are affine and are understood in the sense of distributions, while the functional is convex due to  \eqref{e:conditions_polyconvex}.
	The equations \eqref{discrete_thermoelasticity}$_2$ will turn out to be the Euler-Lagrange equations for the minimization problem 
	\eqref{minproblem}.

The second property is that the iterates satisfy a discrete energy inequality
\begin{equation}
\label{dicrete_energy_ineq}
\tfrac{1}{h}\left(\frac{1}{2}|v|^2+\hat{e}(\xi,\eta)-\frac{1}{2}|v^0|^2-\hat{e}(\xi^0,\eta^0)\right)
\leq \partial_{\alpha}\left(\pge\phz v_i\right)+\frac{\hat{\theta}(\xi,\eta)}{\hat{\theta}(\xi^0,\eta^0)} \, r .
\end{equation}
To deduce this property we use \eqref{e:conditions_polyconvex}.
We give a formal derivation of \eqref{dicrete_energy_ineq} here and defer the proof  to Lemma \ref{lemmaenergydiss}. Set
\begin{align}
\label{I(U)}
I(v,\xi,\eta):=\frac{1}{2}|v|^2+\hat{e}(\xi,\eta)
\end{align}
and define the relative (total) energy to be the quadratic part of the Taylor series expansion
\begin{align}
\label{relative_I(U)}
\begin{split}
I(v^0, &\xi^0,\eta^0|v,\xi,\eta)
:=I(v^0,\xi^0,\eta^0)-I(v,\xi,\eta)-I_v(v,\xi,\eta)\cdot(v^0-v)\\
&\qquad\qquad\qquad\qquad\quad-I_{\xi}(v,\xi,\eta)\cdot(\xi^0-\xi)-I_{\eta}(v,\xi,\eta)(\eta^0-\eta)\\
&=\frac{1}{2}|v^0 - v|^2+\hat{e}(\xi^0,\eta^0) -\hat{e}(\xi,\eta) -\pge(\xi^0-\xi)^B-\frac{\partial\hat{e}}{\partial\eta}(\xi,\eta)(\eta^0-\eta) 
\\
&= \frac{1}{2}|v^0 - v|^2 + \hat{e}(\xi^0,\eta^0 |  \xi, \eta ) 
\end{split}
\end{align} 
Next, write
\begin{align*}
\frac{1}{2}|v|^2+\hat{e}(\xi&,\eta)-\frac{1}{2}|v^0|^2-\hat{e}(\xi^0,\eta^0)=\\
&=-I(v^0,\xi^0,\eta^0|v,\xi,\eta)+v_i(v_i-v_i^0)+\pge(\xi-\xi^0)^B
+\frac{\partial\hat{e}}{\partial\eta}(\xi,\eta)(\eta-\eta^0) \, ,
\end{align*} 
and using  \eqref{discrete_thermoelasticity}, \eqref{ext-constitutive} 
and the null-Lagrangian identity \eqref{Euler-Lagrange} the latter implies
\begin{align}
\label{relative_energy_identity}
\begin{split}
\frac{1}{h}\left(\frac{1}{2}|v|^2+\hat{e}(\xi,\eta)
-\frac{1}{2}|v^0|^2\right.&\left.\vphantom{\frac{1}{2}}-\hat{e}(\xi^0,\eta^0)\right)+\frac{1}{h}I(v^0,\xi^0,\eta^0|v,\xi,\eta)=\\
&=\partial_{\alpha}\left(\pge\phz v_i\right)
+\hat{\theta}(\xi,\eta)\frac{r}{\hat{\theta}(\xi^0,\eta^0)}.
\end{split}
\end{align} 
Under the convexity assumption on $\hat{e}(\xi,\eta),$ we infer that $I(v^0,\xi^0,\eta^0|v,\xi,\eta)\geq 0,$
and \eqref{relative_energy_identity} yields \eqref{dicrete_energy_ineq}.

Given a time step of size $h>0$, we split the interval $[0,T]$, $T = N h$, into subintervals of length $h$ and construct the iterates at
the time nodes $j h$, $j= 0, 1, ..., N$, by solving a time-discretized version of the system \eqref{e:adiabatic_thermoelasticity_augmented}
in the extended variables. For $U^0=(v^0,\xi^0,\eta^0)\in (L^2\times(L^p\times L^q\times L^{\rho})\times L^{\ell})(\mathbb{R}^{23})$
we solve the minimization problem \eqref{minproblem}. The procedure is carried out in Theorem \ref{minimization};  the minimizer solves
equations \eqref{discrete_thermoelasticity} in the sense of distributions. The iterates satisfy \eqref{dicrete_energy_ineq} again in the sense of distributions, 
see corollary \ref{minimizer_energy}. 

Given initial data 
$(F^0,v^0,\eta^0)^T=(F(x,0),v(x,0),\eta(x,0))^T,$ this procedure defines a solution operator $S_h:\mathbb{R}^{23}\to\mathbb{R}^{23}$ 
determined by equations \eqref{discrete_thermoelasticity}
$$
S_h:(v^{j-1},F^{j-1},\zeta^{j-1},w^{j-1},\eta^{j-1})\mapsto(v^j,F^j,\zeta^j,w^j,\eta^j) \, .
$$
A notable feature of the scheme is that it decreases the energy, this is Lemma \ref{dissipation},
and that in turn provides a uniform bound on the iterates thus rendering the scheme stable. We construct those
iterates as follows: At the $j$-th step, the iterate is
\begin{align*}
(v^j,\xi^j,\eta^j)=(v^j,F^j,\zeta^j,w^j,\eta^j)&=S_h(v^{j-1},F^{j-1},\zeta^{j-1},w^{j-1},\eta^{j-1})=S_h(v^{j-1},\xi^{j-1},\eta^{j-1})\\
&=S_h^j(v^0,F^0,\zeta^0,w^0,\eta^0)=S_h^j(v^0,\xi^0,\eta^0)
\end{align*}
so that $U^j=S_h^j(U^0).$ Here, we denote by $S_h^j$ the composition of $S_h$ by itself $j$-times. Given the $(j-1)$-th iterative solution $(v^{j-1},\xi^{j-1},\eta^{j-1})$ 
the solution at the next step is determined by
\begin{align}
\label{discrete_thermoelasticity_j}
\begin{split}
\frac{(\xi^j-\xi^{j-1})^B}{h}&=\partial_{\alpha}\left(\phzj v_i^j\right)\\
\frac{v_i^j-v_i^{j-1}}{h}&=\partial_{\alpha}\left(\pgej\phzj\right)\\
\frac{\eta^j-\eta^{j-1}}{h}&=\frac{r}{\hat{\theta}(\xi^{j-1},\eta^{j-1})}.
\end{split}
\end{align}

Next we list the technical hypotheses required for the analysis. 
Throughout the text, we assume $\hat{e}\in C^3(\mathbb{R}^{19}\times[0,\infty))$ \tcb{is strictly convex} and impose the growth conditions 
\begin{equation}
\label{growth.con.1}
c(|F|^p+|\zeta|^q+|w|^{\rho}+|\eta|^{\ell})-c\leq \hat{e}(F,\zeta,w,\eta)\leq c(|F|^p+|\zeta|^q+|w|^{\rho}+|\eta|^{\ell})+c\;,
\end{equation}
\begin{equation}
\label{growth.con.2}
\lim_{|\xi|_{p,q,\rho}+|\eta|^{\ell}\to\infty}\frac{\partial_{\eta}\hat{e}(\xi,\eta)}{|\xi|_{p,q,\rho}+|\eta|^{\ell}}=
\lim_{|\xi|_{p,q,\rho}+|\eta|^{\ell}\to\infty}\frac{\hat{\theta}(\xi,\eta)}{|\xi|_{p,q,\rho}+|\eta|^{\ell}}=0\;,
\end{equation}
and
\begin{equation}
\label{growth.con.3}
|\partial_F\hat{e}|^{\frac{p}{p-1}}+|\partial_{\zeta}\hat{e}|^{\frac{p}{p-2}}+|\partial_w\hat{e}|^{\frac{p}{p-3}}\leq c(|F|^p+|\zeta|^q+|w|^{\rho}+|\eta|^{\ell})+c,
\end{equation}
for some constant $c>0$ and for $p\geq 4,$ $q \ge 2$, $\rho > 1$ and $\ell>1,$ where we used the notation:
\begin{equation*}
|\xi|_{p,q,\rho}:=|F|^p+|\zeta|^q+|w|^{\rho}.
\end{equation*}
\tcb{
	An example of a strictly convex function satisfying the above growth conditions is
	$$
	\hat{e}(\xi,\eta) = |F|^4+|F|^2+ g (\zeta) + h (w) + k(\eta)
	$$
	where $g(\zeta )$, $h(w)$ and $k(\eta)$ are smooth uniformly convex functions satisfying,  for $q \ge 2$ and $\rho, \ell > 1$,
	$$
	\begin{aligned}
	g(\zeta) &= |\zeta|^q  + o (|\zeta|^q)  \quad \mbox{as $|\zeta| \to \infty$}\, , 
	\\
	\quad h(w) &= |w|^\rho  + o (|w|^\rho)  \quad \mbox{as $|w| \to \infty$}\, , 
	\\
	\quad k(\eta)  &= |\eta|^\ell  + o (|\eta|^\ell)  \quad \mbox{as $|\eta| \to \infty$}\, .
	\end{aligned}
	$$
}

\section{The minimization problem}\label{3:SEC4}
In this Section, we prove that the discretization scheme \eqref{discrete_thermoelasticity}-\eqref{dicrete_energy_ineq} can be solved for all $h>0$
by a constrained minimization method which decreases the energy.
\begin{theorem}
	\label{minimization}
	Suppose the initial data
	\begin{align}
	\begin{split}
	\label{initial data}
	y^0\in W^{1,p}(\mathbb{T}^3), \quad \partial_ty^0=v^0\in L^2(\mathbb{T}^3), \quad \eta^0\in L^{\ell}(\mathbb{T}^3), \quad \text{such that if}\\
	F^0=\nabla y^0 \in L^p(\mathbb{T}^3) \quad \text{then} \quad (F^0, \cof F^0, \det F^0)\in (L^p\times L^q\times L^{\rho})(\mathbb{T}^3)
	\end{split}
	\end{align}
	and assume the growth condition \eqref{growth.con.1} on $\hat{e}(\xi,\eta).$ If $\hat{e}(\xi,\eta)$ is 
	strictly convex, there exists a unique
	$$(v,\xi,\eta)\in(L^2\times L_{p,q,\rho}\times L^{\ell})(\mathbb{T}^3)$$
	which minimizes the functional
	\begin{align}
	\label{functional}
	J(v,\xi,\eta)=\int\frac{1}{2}|v-v^0|^2+\hat{e}(\xi,\eta)\:dx
	\end{align}
	over the weakly closed affine subspace $\mathcal{C}$ of $(L^2\times L_{p,q,\rho}\times L^{\ell})\ni(v,\xi,\eta)$ defined as
	\begin{equation}
	\label{lemma_weak_formulation}
	\begin{aligned}
	\int\phi\frac{1}{h}(\xi-\xi^0)^B\:dx&=-\int\phz v_i\partial_{\alpha}\phi\:dx,\\
	\int\phi\frac{1}{h}(\eta-\eta^0)\:dx&=\int\frac{r}{\hat{\theta}(\xi^0,\eta^0)}\phi\:dx,
	\end{aligned}
	\qquad \phi\in C^{\infty}(\mathbb{T}^3).
	\end{equation}
	Assuming further \eqref{growth.con.3} the minimizer satisfies the Euler-Lagrange
	equation
	\begin{equation}
	\label{E-L_min}
	\int\phi\frac{1}{h}(v_i-v_i^0)\:dx=-\int\pge\phz\partial_{\alpha}\phi\:dx.
	\end{equation}
	Moreover, the condition \eqref{constraint} is preserved by the solution operator $S_h,$ which means that whenever $F^0$ is a gradient, then $F$ is also a gradient.
	Therefore there exists a function 
	\begin{align*}
	y:\mathbb{T}^3\to\mathbb{R}^3 \:\:\:\text{in}\:\:W^{1,p}(\mathbb{T}^3)
	\end{align*}
	such that $\partial_{\alpha}y_i=F_{i\alpha}.$
\end{theorem}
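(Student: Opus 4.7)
The proof proceeds by the direct method of the calculus of variations. First I would verify that $\mathcal{C}$ is non-empty: the second constraint determines $\eta=\eta^0+hr/\hat{\theta}(\xi^0,\eta^0)$ pointwise, which lies in $L^\ell$ under the standing hypotheses; for the first constraint, a mollification of $(v^0,F^0)$ produces an admissible $\xi$ in $L_{p,q,\rho}$. Coercivity of $J$ follows immediately from the lower bound in \eqref{growth.con.1}, giving that any minimizing sequence is bounded in $L^2\times L_{p,q,\rho}\times L^{\ell}$ and therefore admits a weakly convergent subsequence. The constraint set $\mathcal{C}$ is weakly closed, since its defining equations are affine and linearly tested against smooth functions, and $J$ is weakly lower semicontinuous because the integrand $\tfrac{1}{2}|v-v^0|^2+\hat e(\xi,\eta)$ is convex in $(v,\xi,\eta)$. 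A minimizer therefore exists, and the strict convexity of $\hat e$ combined with the convex constraint set delivers uniqueness.

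For the Euler-Lagrange equation I would use a variational argument. Fix a smooth direction $\tilde v\in C^\infty(\mathbb{T}^3;\mathbb{R}^3)$ and let $\tilde\xi$ be the formal variation of $\xi$ defined distributionally by $\tilde\xi^B/h=\partial_\alpha(\phz\,\tilde v_i)$; note that $\eta$ admits no variation since it is determined by $(\xi^0,\eta^0)$ through the second constraint. The first-order optimality condition reads
\[
\int(v-v^0)\cdot \tilde v\,dx+\int\pge\,\tilde\xi^B\,dx=0,
\]
and substituting the distributional definition of $\tilde\xi$ yields
\[
\int(v_i-v_i^0)\tilde v_i\,dx+h\int\pge\,\partial_\alpha\bigl(\phz\,\tilde v_i\bigr)\,dx=0.
\]
Expanding the derivative on the right, the null-Lagrangian identity \eqref{Euler-Lagrange} applied to $F^0=\nabla y^0$ forces $\partial_\alpha(\phz)=0$ distributionally, so only the term $\phz\,\partial_\alpha\tilde v_i$ survives and produces \eqref{E-L_min} after taking $\tilde v_i=\phi$ in a single component. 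The main technical obstacle is justifying admissibility of the perturbed competitor, since $\tilde\xi$ generally lies only in a negative Sobolev space rather than in $L_{p,q,\rho}$; I would handle this by smoothing $F^0$, deriving the identity at the regularized level, and passing to the limit using the dual growth bound in \eqref{growth.con.3}, which places $\pge$ in the appropriate $L^s$-space so that all pairings are finite.

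Finally, preservation of the gradient constraint follows directly from the structure of the first nine components of \eqref{e:DISCRET_C}: for $\Phi^B(F)=F_{i\alpha}$ one has $\partial\Phi^B/\partial F_{j\beta}=\delta_{ij}\delta_{\alpha\beta}$, and the first equation of \eqref{e:DISCRET_C} collapses to $F_{i\alpha}-F^0_{i\alpha}=h\,\partial_\alpha v_i$ in the sense of distributions. Since $F^0=\nabla y^0$ is curl-free and the distributional curl of $\partial v$ vanishes by equality of mixed partials, $F\in L^p(\mathbb{T}^3)$ is curl-free with zero mean on $\mathbb{T}^3$, so the Poincaré lemma on the torus produces $y\in W^{1,p}(\mathbb{T}^3)$ with $\nabla y=F$, as required.
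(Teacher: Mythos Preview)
Your overall strategy matches the paper's: direct method for existence, strict convexity for uniqueness, a one-parameter variation for the Euler--Lagrange equation, and the curl-free computation for the gradient constraint. The difference lies in how you organize the Euler--Lagrange step. You define $\tilde\xi$ distributionally from the constraint and then invoke the null-Lagrangian identity \emph{inside} the first-order condition; this forces you to pair $\pge$ with an object that is a priori only a distribution, which is precisely the ``obstacle'' you flag and propose to cure by smoothing $F^0$. The paper avoids this detour by writing the variation from the outset as the pointwise function
\[
(\delta V_i,\delta\Xi^B,\delta H)=\Bigl(\phi_i,\ h\,\phz\,\partial_\alpha\phi_i,\ 0\Bigr),
\]
so that $(v+\epsilon\phi,\xi+\epsilon\delta\Xi,\eta)$ is a manifestly admissible competitor in $L^2\times L_{p,q,\rho}\times L^\ell$; the null-Lagrangian identity is then used only to check that this perturbation respects the weak constraint \eqref{lemma_weak_formulation}, and that check is against smooth test functions, so no distributional product rule is needed. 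With the variation fixed this way, the technical work concentrates where it actually belongs: justifying the limit $\epsilon\to0$ of the difference quotient of $J$ by dominated convergence, using \eqref{growth.con.3} to bound $\bigl|\pge\phz\bigr|^{p'}$ uniformly in $\epsilon$. You allude to \eqref{growth.con.3} but for the purpose of making pairings finite after smoothing; the paper's point is that it controls the integrand in the DCT argument. Your smoothing route would also succeed, but it reproduces the content of the paper's Lemma~\ref{Lemma_null_lagrangian_WEAK} and is heavier than necessary for this step.
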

\begin{proof}
	\textbf{Step 1:} (Existence of a minimizer). Let $\inf\limits_{\mathcal{C}}J(v,\xi,\eta)=:m$ and observe that by virtue of \eqref{growth.con.1} there holds
	\begin{align*}
	J(v^0,\xi^0,\eta^0)=\int\hat{e}(\xi^0,\eta^0)\:dx
	\leq c\left(\|F^0\|^p_{L^p}+\|\zeta^0\|^q_{L^q}+\|w^0\|^{\rho}_{L^{\rho}}+\|\eta^0\|^{\ell}_{L^{\ell}}+|\mathbb{T}^3|\right)<\infty,
	\end{align*}
	so that $m<+\infty.$ Also $m>-\infty$ because similarly the lower bound in \eqref{growth.con.1} implies that
	\begin{align*}
	J(v,\xi,\eta)\geq\frac{1}{2}\|v-v^0\|^2_{L^2}+c\left(\|F\|^p_{L^p}+\|\zeta\|^q_{L^q}+\|w\|^{\rho}_{L^{\rho}}+\|\eta\|^{\ell}_{L^{\ell}}-|\mathbb{T}^3|\right)>-\infty.
	\end{align*}
	Let $(\ve,\xe,\ee)$ be a minimizing sequence: $J(\ve,\xe,\ee)\to m$ then indeed for $\varepsilon$ large enough
	\begin{align*}
	m+1\geq J(\ve,\xe,\ee)\geq c\left(\|\ve-v^0\|^2_{L^2}+\|\fe\|^p_{L^p}+\|\ze\|^q_{L^q}+\|\we\|^{\rho}_{L^{\rho}}+\|\ee\|^{\ell}_{L^{\ell}}\right)-c|\mathbb{T}^3|,
	\end{align*}
	therefore up to a subsequence 
	\begin{align*}
	\ve\rightharpoonup v\:\:\:\text{in}\:\:L^2(\mathbb{T}^3),\qquad
	\xe\rightharpoonup \xi\:\:\:\text{in}\:\:L_{p,q,\rho}(\mathbb{T}^3),\qquad
	\ee\rightharpoonup \eta\:\:\:\text{in}\:\:L^{\ell}(\mathbb{T}^3) \, .
	\end{align*}
	\tcb{
		Since $|\ve-\cdot|^2$ and $\hat{e}(\xe,\ee)$ are convex,
		\begin{align*}
		\liminf_{\varepsilon}J(\ve,\xe,\ee) \geq\int\frac{1}{2}|v-v^0|^2+\hat{e}(\xi,\bar{\eta})\:dx = J(v,\xi,\eta) \, .
		\end{align*}
		Finally, as the constraints are affine, the weak limit $(v,\xi,\eta) \in \mathcal{C}$ and minimizes $J(v,\xi,\eta)$ over $\mathcal{C}.$
	}
	
	\medskip
	\textbf{Step 2:} (Uniqueness). Let $\bar{U}=(\bar{v},\bar{\xi},\bar{\eta})$ and 
	$\tilde{U}=(\tilde{v},\tilde{\xi},\tilde{\eta})\:\in\mathcal{C}$ such that $J(\bar{U})=J(\tilde{U})=m$ and set
	\begin{align*}
	U=\frac{\bar{U}+\tilde{U}}{2}=\left(\frac{\bar{v}+\tilde{v}}{2},\frac{\bar{\xi}+\tilde{\xi}}{2},\frac{\bar{\eta}+\tilde{\eta}}{2}\right)
	=\left(\frac{\bar{v}+\tilde{v}}{2},\frac{\bar{\xi}+\tilde{\xi}}{2},\eta^0+h\frac{r}{\hat{\theta}(\xi^0,\eta^0)}\right)\in\mathcal{C}.
	\end{align*}
	Then $U$ is also a minimizer:
	\begin{align*}
	&m \leq J\left(\frac{\bar{v}+\tilde{v}}{2},\frac{\bar{\xi}+\tilde{\xi}}{2},\frac{\bar{\eta}+\tilde{\eta}}{2}\right)
	=\int\left|\frac{\bar{v}+\tilde{v}}{2}-v^0\right|^2+\hat{e}\left(\frac{\bar{\xi}+\tilde{\xi}}{2},\eta^0+h\frac{r}{\hat{\theta}(\xi^0,\eta^0)}\right)\:dx\\
	&\leq\frac{1}{2}\int|\bar{v}-v^0|^2+\hat{e}\left(\bar{\xi},\eta^0+h\frac{r}{\hat{\theta}(\xi^0,\eta^0)}\right)\:dx
	+\frac{1}{2}\int|\tilde{v}-v^0|^2+\hat{e}\left(\bar{\xi},\eta^0+h\frac{r}{\hat{\theta}(\xi^0,\eta^0)}\right)\:dx\\
	&=\frac{1}{2}J(\bar{U})+\frac{1}{2}J(\tilde{U})=m,
	\end{align*}
	since $\xi\mapsto\hat{e}(\xi,\cdot)$ is convex. This implies
	\begin{align*}
	\int\left[\frac{1}{2}|\bar{v}-v^0|^2+\hat{e}\left(\bar{\xi},\eta^0+h\frac{r}{\hat{\theta}(\xi^0,\eta^0)}\right)
	+\frac{1}{2}|\tilde{v}-v^0|^2+\hat{e}\left(\bar{\xi},\eta^0+h\frac{r}{\hat{\theta}(\xi^0,\eta^0)}\right)\right.\\
	\left.-\left|\frac{\bar{v}+\tilde{v}}{2}-v^0\right|^2+\hat{e}\left(\frac{\bar{\xi}-\tilde{\xi}}{2},\eta^0+h\frac{r}{\hat{\theta}(\xi^0,\eta^0)}\right)\right]\:dx=0
	\end{align*}
	but the integrand is nonnegative since $(v,\xi)\mapsto|v-v^0|^2+\hat{e}(\xi,\cdot)$ is convex, thus it has to be identically zero. Since
	$\hat{e}$ is strictly convex we get $\bar{v}=\tilde{v}$ and
	$\bar{\xi}=\tilde{\xi}$ \textit{a.e.} so that $\bar{U}=\tilde{U}$ \textit{a.e.}.
	
	\medskip
	\textbf{Step 3:} (The Euler-Lagrange equation). Employing the notation $\delta g$ to indicate the variation of the function $g$,
	we consider a smooth variation generated by the functions $\phi_i:\mathbb{T}^3\to\mathbb{R}^3$ $(i=1,2,3):$
	\begin{align}
	\label{variation}
	(\epsilon\delta V_i,\epsilon\delta \Xi^B,\epsilon\delta H)=\left(\epsilon\phi_i,\epsilon h\phz\partial_{\alpha}\phi_i,0\right),\qquad(\epsilon>0)
	\end{align}
	by virtue of \eqref{lemma_weak_formulation}. Since $(v,\xi,\eta)$ is a minimizer
	\begin{align*}
	J(v,\xi,\eta)\leq J(v+\epsilon\delta V,\xi+\epsilon\delta\Xi,\eta+\epsilon\delta H)
	=J\left(v+\epsilon\phi,\epsilon h\frac{\partial\Phi}{\partial F_{i\alpha}}(F^0)\partial_{\alpha}\phi,\eta\right).
	\end{align*}
	Let us define
	\begin{align*}
	\int J^{\epsilon}(v,\xi,\eta)\:dx
	&:=\int\frac{1}{\epsilon}\left(J(v+\epsilon\delta V,\xi+\epsilon\delta\Xi,\eta+\epsilon\delta H)-J(v,\xi,\eta)\right)\:dx\\
	&=
	\int\frac{1}{\epsilon}\int_{0}^{1}\frac{d}{ds}
	J(v+s\epsilon\delta V,\xi+s\epsilon\delta\Xi,\eta+s\epsilon\delta H)\:dsdx
	\end{align*}
	Then using \eqref{variation}, the integrand becomes
		\begin{align*}
		J^{\epsilon} &(v,\xi,\eta) =
		\frac{1}{\epsilon}\int_{0}^{1}\frac{d}{ds}J(v+s\epsilon\delta V,\xi+s\epsilon\delta\Xi,\eta+s\epsilon\delta H)\:ds\\
		&=
		\frac{1}{\epsilon}\int_{0}^{1}\left[(v+s\epsilon\delta V-v^0)_i \epsilon\delta V_i
		+\frac{\partial \hat{e}}{\partial \xi^B}(\xi+s\epsilon\delta\Xi,\eta+s\epsilon\delta H)\left(\epsilon h\phz\partial_{\alpha}\delta V_i\right)\right.\\
		&\left.\qquad\qquad\qquad\qquad\qquad\qquad\qquad\qquad\qquad
		+\frac{\partial \hat{e}}{\partial \eta}(\xi+s\epsilon\delta\Xi,\eta+s\epsilon\delta H)\epsilon\delta H\right]\:ds\\
		&=
		\int_{0}^{1}\left[(v+s\epsilon\delta V-v^0)_i \delta V_i
		+\frac{\partial \hat{e}}{\partial \xi^B}(\xi+s\epsilon\delta\Xi,\eta+s\epsilon\delta H)\left(h\phz\partial_{\alpha}\delta V_i\right)\right]\:ds\\
		&=\int_{0}^{1}\left[(v+s\epsilon\delta V-v^0)_i\phi_i+\frac{\partial \hat{e}}{\partial \xi^B}(\xi+s\epsilon\delta\Xi,\eta+s\epsilon\delta H)
		\left(h\phz\partial_{\alpha}\phi_i\right)\right]\:ds.
		\end{align*}
	
	To pass to the limit $$\lim\limits_{\epsilon\to 0}\int J^{\epsilon}(v,\xi,\eta)\:dx,$$ we use the Dominated Convergence Theorem, for which we need to bound 
	$|J^{\epsilon}(v,\xi,\eta)|$ by an integrable function independent of $\epsilon$. Let  $\epsilon\in[0,\epsilon^*]$, for some value $\epsilon^*$ fixed ,
	and $\ds p'=\frac{p}{p-1}$ be the dual exponent of $p$, then
	\begin{align*}
	\left| \pge  \phz \right|^{p'}
	&= \left|\frac{\partial\hat{e}}{\partial F_{i\alpha}}(\xi,\eta)
	+\frac{\partial\hat{e}}{\partial\zeta_{k\gamma}}(\xi,\eta)\frac{\partial(\mathrm{cof}F^0)^{k\gamma}}{\partial F_{i\alpha}}
	+\frac{\partial\hat{e}}{\partial w}(\xi,\eta)\frac{\partial(\mathrm{det}F^0)}{\partial F_{i\alpha}}\right|^{\frac{p}{p-1}}
	\\[4pt]
	&\leq C_1 \left(|\partial_F\hat{e}|^{\frac{p}{p-1}}+|\partial_{\zeta}\hat{e}|^{\frac{p}{p-1}}|F^0|^{\frac{p}{p-1}}+|\partial_w\hat{e}|^{\frac{p}{p-1}}|F^0|^{\frac{2p}{p-1}}\right).
	\end{align*}
	Using Young's inequality we can bound the terms
	\begin{align*}
	|\partial_{\zeta}\hat{e}|^{\frac{p}{p-1}}|F^0|^{\frac{p}{p-1}}\leq\frac{(|\partial_{\zeta}\hat{e}|^{\frac{p}{p-1}})^l}{l}+\frac{(|F^0|^{\frac{p}{p-1}})^{l'}}{l'}
	=\frac{|\partial_{\zeta}\hat{e}|^{\frac{p}{p-2}}}{l}+\frac{|F^0|^p}{l'}
	\end{align*}
	and
	\begin{align*}
	|\partial_w\hat{e}|^{\frac{p}{p-1}}|F^0|^{\frac{2p}{p-1}}\leq\frac{(|\partial_w\hat{e}|^{\frac{p}{p-1}})^l}{l}+\frac{(|F^0|^{\frac{2p}{p-1}})^{l'}}{l'}
	=\frac{|\partial_w\hat{e}|^{\frac{p}{p-3}}}{l}+\frac{|F^0|^p}{l'}
	\end{align*}
	for $l=\frac{p-1}{p-2}$ and $l'=p-1,$ so that the growth condition \eqref{growth.con.3} implies
	\begin{align}
	\label{bound_p_prime}
	\begin{split}
	\left|\pge\phz\right|^{p'}
	&\leq C_2 \left(|\partial_F\hat{e}|^{\frac{p}{p-1}}+|\partial_{\zeta}\hat{e}|^{\frac{p}{p-2}}+|\partial_w\hat{e}|^{\frac{p}{p-3}}+|F^0|^p\right)\\
	&\leq C_3 \left(|F|^p+|\zeta|^q+|w|^{\rho}+|\eta|^{\ell}+|F^0|^p +1\right).
	\end{split}
	\end{align}
	Therefore indeed $$\lim\limits_{\epsilon\to 0}\int J^{\epsilon}(v,\xi,\eta)\:dx=\int J(v,\xi,\eta)\:dx$$ and as a result
	$$\int(v_i-v_i^0)\phi_i+h\pge\phz\partial_{\alpha}\phi_i\:dx=0,$$ which yields the Euler-Lagrange equation \eqref{E-L_min}.
	
	\medskip
	\textbf{Step 4:} (On condition \eqref{constraint}). Consider the solution operator $S_h:(v^0,F^0,\zeta^0,w^0,\eta^0)\mapsto(v,F,\zeta,w,\eta)$ defined by equations 
	\eqref{discrete_thermoelasticity}. We want to validate \eqref{constraint} given that the minimizer $F=\nabla y$ and vice versa. Indeed, \eqref{lemma_weak_formulation}$_1$
	implies that $$\int F_{i\alpha}\phi\:dx=\int F^0_{i\alpha}\phi\:dx-\int hv_i\partial_{\alpha}\phi\:dx,$$ hence
	\begin{align*}
	\int F_{i\alpha}\partial_{\beta}\phi\:dx
	&=\int \partial_{\alpha}y^0_i\partial_{\beta}\phi\:dx-\int hv_i\partial_{\alpha}\partial_{\beta}\phi\:dx\\
	&=\int \partial_{\beta}y^0_i\partial_{\alpha}\phi\:dx-\int hv_i\partial_{\beta}\partial_{\alpha}\phi\:dx
	=\int F_{i\beta}\partial_{\alpha}\phi\:dx,
	\end{align*}
	for all $\phi\in C^{\infty}(\mathbb{T}^3).$
	Conversely, condition \eqref{constraint} implies that $F$ is conservative \textit{i.e.} $\hbox{curl}\,F=0$ therefore there exists $y\in\mathbb{R}^3$ such that $F=\nabla y$
	and since $F\in L^p(\mathbb{T}^3),$ $y\in W^{1,p}(\mathbb{T}^3).$
\end{proof}

\begin{lemma}
	\label{Lemma_null_lagrangian_WEAK}
	Let $p \ge 4$,
	\begin{align}
	\label{regularity_lemma_null_lagrangian_WEAK}
	y\in W^{1,\infty}(L^2(\mathbb{T}^3)) \cap L^\infty (W^{1,p} (\mathbb{T}^3))\quad\text{so that}\quad
	F_{i\alpha}\in L^{\infty}(L^p(\mathbb{T}^3))
	\end{align}
	and assume the field $g(x,t):\mathbb{R}^3\times\mathbb{R}^+\to\mathbb{R}^3$ such that
	\begin{align*}
	\quad g_i\in L^{\infty}(L^2(\mathbb{T}^3)),\quad \partial_{\alpha}g_i\in L^{\infty}(L^p(\mathbb{T}^3))\quad (i=1,2,3).
	\end{align*}
	Then the identities
	\begin{equation}
	\label{identities_lemma_null_lagrangian_WEAK}
	\partial_{\alpha}\left(\ph g_i\right)=\ph \partial_{\alpha}g_i
	\end{equation}
	hold in the sense of distributions.
\end{lemma}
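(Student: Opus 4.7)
The plan is to reduce the assertion to the classical pointwise null-Lagrangian identity
\begin{equation*}
\partial_{\alpha}\left(\frac{\partial\Phi^B}{\partial F_{i\alpha}}(\nabla y)\right)=0,
\end{equation*}
which holds for every smooth $y$, and then to pass to the distributional limit using the given Lebesgue exponents. Testing \eqref{identities_lemma_null_lagrangian_WEAK} against $\phi\in C^{\infty}(\mathbb{T}^3)$, the claim is equivalent to
\begin{equation*}
\int_{\mathbb{T}^3}\frac{\partial\Phi^B}{\partial F_{i\alpha}}(F)\,g_i\,\partial_\alpha\phi\,dx
+\int_{\mathbb{T}^3}\frac{\partial\Phi^B}{\partial F_{i\alpha}}(F)\,(\partial_\alpha g_i)\,\phi\,dx=0,
\end{equation*}
so I will verify this identity for each of the three blocks $\Phi^B=F_{j\beta}$, $\Phi^B=(\cof F)_{k\gamma}$, $\Phi^B=\det F$ separately. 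The first block is trivial since $\partial\Phi^B/\partial F_{i\alpha}$ is constant and the statement reduces to integration by parts for $g\in W^{1,2}$.

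For the remaining two blocks I would regularize by mollification in the spatial variable: fix a standard smoothing kernel $\rho_\varepsilon$ and set $y^\varepsilon:=y\ast\rho_\varepsilon$, $F^\varepsilon:=\nabla y^\varepsilon=F\ast\rho_\varepsilon$, and $g^\varepsilon_i:=g_i\ast\rho_\varepsilon$. Because $F^\varepsilon=\nabla y^\varepsilon$ is smooth, the classical null-Lagrangian identity and the product rule give the pointwise equality
\begin{equation*}
\partial_\alpha\!\left(\frac{\partial\Phi^B}{\partial F_{i\alpha}}(F^\varepsilon)\,g^\varepsilon_i\right)
=\frac{\partial\Phi^B}{\partial F_{i\alpha}}(F^\varepsilon)\,\partial_\alpha g^\varepsilon_i,
\end{equation*}
which after pairing with $\phi$ becomes
\begin{equation*}
\int \frac{\partial\Phi^B}{\partial F_{i\alpha}}(F^\varepsilon)\,g^\varepsilon_i\,\partial_\alpha\phi\,dx
=-\int \frac{\partial\Phi^B}{\partial F_{i\alpha}}(F^\varepsilon)\,(\partial_\alpha g^\varepsilon_i)\,\phi\,dx.
\end{equation*}
The whole proof then consists in passing to the limit $\varepsilon\to 0$ in both sides.

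The main obstacle, and the reason why the exponent threshold $p\ge 4$ appears, is the determinant block $\Phi^B=\det F$, for which $\partial\Phi^B/\partial F_{i\alpha}=(\cof F)_{i\alpha}$ is quadratic in $F$. Standard properties of mollification and the dominated convergence for polynomial Nemytskii operators yield $\cof F^\varepsilon\to\cof F$ in $L^{p/2}(\mathbb{T}^3)$ and, for the cofactor block $\Phi^B=(\cof F)_{k\gamma}$, the linear Nemytskii operator $F\mapsto\partial\Phi^B/\partial F$ gives convergence in $L^{p}(\mathbb{T}^3)$. Using $g^\varepsilon\to g$ in $L^2$ and $\partial g^\varepsilon\to\partial g$ in $L^p$, together with H\"older's inequality, both sides converge provided
\begin{equation*}
\frac{1}{p/2}+\frac{1}{2}\le 1 \quad\text{and}\quad \frac{1}{p/2}+\frac{1}{p}\le 1,
\end{equation*}
i.e.\ precisely when $p\ge 4$. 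The cofactor block is handled analogously and is less restrictive. Passing to the limit in the two integrals therefore establishes
\begin{equation*}
\int \frac{\partial\Phi^B}{\partial F_{i\alpha}}(F)\,g_i\,\partial_\alpha\phi\,dx
=-\int \frac{\partial\Phi^B}{\partial F_{i\alpha}}(F)\,(\partial_\alpha g_i)\,\phi\,dx
\qquad\forall\,\phi\in C^\infty(\mathbb{T}^3),
\end{equation*}
which is exactly the distributional identity \eqref{identities_lemma_null_lagrangian_WEAK}. The delicate point to be checked is thus the $L^{p/2}$-continuity of the cofactor on mollified gradients, which I expect to dispatch by writing $\cof F^\varepsilon-\cof F$ as a sum of products one factor of which is $F^\varepsilon-F\to 0$ in $L^p$ while the other is uniformly bounded in $L^p$.
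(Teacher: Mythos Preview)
Your proposal is correct and follows essentially the same route as the paper: mollify $y$ and $g$ in space, invoke the pointwise null-Lagrangian identity $\partial_\alpha\big(\partial\Phi^B/\partial F_{i\alpha}(\nabla y^\varepsilon)\big)=0$ for the smooth approximants, and pass to the limit block by block using H\"older. The paper carries out exactly the bilinear/trilinear estimate you anticipate for $\cof F^\varepsilon\to\cof F$ in $L^{p/2}$ (writing the difference as products with one factor $F^\varepsilon-F\to 0$ in $L^p$), and identifies the same threshold $p\ge 4$ from the pairing $(p/2)'=p/(p-2)\le 2$ needed to control $(\cof F)\,g$.
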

\begin{proof}
	Fix time, let $y$ as in \eqref{regularity_lemma_null_lagrangian_WEAK}, and consider the smooth convolution (in space)
	$y^{\epsilon}:=y\star \rho_{\epsilon}$,
	\tcb{ 
		where $\rho_{\epsilon}=\frac{1}{\epsilon^3} \rho(\frac{x}{\epsilon}),$  $0<\rho\in C^{\infty}_c (\mathbb{R}^3),$
		$\int\rho(x)dx =1$,
	}
	then  $y^{\epsilon}\in C^{\infty}(\mathbb{T}^3)$,
	$F_{i\alpha}^{\epsilon}=\partial_{\alpha}y^{\epsilon}_i\in C^{\infty}(\mathbb{T}^3)$ and 
	\begin{align*}
	\|y^{\epsilon}-y\|_{W^{1,p}}\to 0 \, .
	\end{align*}
	Similarly, define the convolution of $g$ with
	a smooth kernel: $g^{\epsilon}:=g\star\psi_{\epsilon}$ so that 
	$g^{\epsilon},\partial_{\alpha}g^{\epsilon}\in C^{\infty}(\mathbb{T}^3).$ Then
	\begin{align}
	\label{convergence_g}
	\|g^{\epsilon}-g\|_{L^2}\to 0,\quad \|\partial_{\alpha}g^{\epsilon}-\partial_{\alpha}g\|_{L^p}\to 0.
	\end{align}
	As the cofactor function \eqref{cofactor_d=3} is bilinear
	in $F$ and the determinant \eqref{det_d=3} trilinear ($d=3$), we can obtain the bounds 
	\begin{align*}
	\left\|\frac{\partial\cof F^{\epsilon}}{\partial F_{i\alpha}}-\frac{\partial\cof F}{\partial F_{i\alpha}}\right\|_{L^p}
	\leq\left\|F^{\epsilon}-F\right\|_{L^p}
	\end{align*}
	and
	\begin{align*}
	\left\|\frac{\partial\det F^{\epsilon}}{\partial F_{i\alpha}}-\frac{\partial\det F}{\partial F_{i\alpha}}\right\|_{L^{p/2}}
	&=\left\|\frac{1}{3}(\cof F^{\epsilon})_{i\alpha}-\frac{1}{3}(\cof F)_{i\alpha}\right\|_{L^{p/2}}\\
	&=\left\|\frac{1}{3}\epsilon_{ijk}\epsilon_{\alpha\beta\gamma}(F^{\epsilon}_{j\beta}F^{\epsilon}_{k\gamma}-F_{j\beta}F_{k\gamma})\right\|_{L^{p/2}}\\
	&\leq c\left\|F^{\epsilon}-F\right\|_{L^p}\left\||F^{\epsilon}|+|F|\right\|_{L^p}
	\end{align*}
	which in turn imply
	\begin{align}
	\label{convergence_cof}
	\frac{\partial\cof F^{\epsilon}}{\partial F_{i\alpha}}&\to \frac{\partial\cof F}{\partial F_{i\alpha}} \quad\text{in}\:\: L^p(\mathbb{T}^3)\\
	\label{convergence_det}
	\frac{\partial\det F^{\epsilon}}{\partial F_{i\alpha}}&\to \frac{\partial\det F}{\partial F_{i\alpha}} \quad\text{in}\:\: L^{p/2}(\mathbb{T}^3).
	\end{align}
	By \eqref{Euler-Lagrange}, in order to show \eqref{identities_lemma_null_lagrangian_WEAK}, it suffices to pass to the limit in the identities 
	\begin{align}
	\label{epsilon_eqn_cof}
	\int\epsilon_{ijk}\epsilon_{\alpha\beta\gamma}F_{j\beta}^{\epsilon}g_i^{\epsilon}\partial_{\alpha}\phi_i\:dx&=
	-\int\epsilon_{ijk}\epsilon_{\alpha\beta\gamma}F_{j\beta}^{\epsilon}\partial_{\alpha}g_i^{\epsilon}\phi_i\:dx\\
	\label{epsilon_eqn_det}
	\int\epsilon_{ijk}\epsilon_{\alpha\beta\gamma}F_{j\beta}^{\epsilon}F_{k\gamma}^{\epsilon}g_i^{\epsilon}\partial_{\alpha}\phi_i\:dx&=
	-\int\epsilon_{ijk}\epsilon_{\alpha\beta\gamma}F_{j\beta}^{\epsilon}F_{k\gamma}^{\epsilon}\partial_{\alpha}g_i^{\epsilon}\phi_i\:dx
	\end{align}
	For \eqref{epsilon_eqn_cof}, observe that $p'=\frac{p}{p-1}\leq 2$ when $p>4.$ Therefore,
	$g_i^{\epsilon}\to g_i $ in $L^{p'}(\mathbb{T}^3)$ and by \eqref{convergence_cof} the product $F_{j\beta}g_i\in L^1$.
	For the right hand side, due to
	$F^{\epsilon}\to F$ in $L^p(\mathbb{T}^3)$ and $\partial_{\alpha}g^{\epsilon}\to \partial_{\alpha}g$ in $L^p(\mathbb{T}^3),$ their product converges
	in $L^{p/2}.$ For \eqref{epsilon_eqn_det}, we have that $F_{j\beta}^{\epsilon}F_{k\gamma}^{\epsilon}\to F_{j\beta}F_{k\gamma}$
	in $L^{p/2}$ this is exactly \eqref{convergence_det}. For $p\ge 4$, the dual exponent of $\frac{p}{2}$ namely
	$\left ( \tfrac{p}{2} \right )' = \frac{p}{p-2}\leq 2$ 
	so that $g^{\epsilon}\to g$ in $L^{\frac{p}{p-2}}(\mathbb{T}^3).$ As a result the product $F_{j\beta}F_{k\gamma}g_i\in L^1.$
	Similarly, using \eqref{convergence_g}$_2$ and because $p\geq \frac{p}{2}$ we deduce that $\partial_{\alpha}g^{\epsilon}\to \partial_{\alpha}g$
	in $L^{p/2}(\mathbb{T}^3).$ Then given \eqref{convergence_det}, in the limit $F_{j\beta}F_{k\gamma}\partial_{\alpha}g_i\in L^{p/4}.$
	This shows that we can pass to the limit as $\epsilon\to 0$ and completes the proof.
\end{proof}

The next step is to prove that the scheme dissipates the energy. For this we need a uniform convexity hypothesis for $\hat{e}(\xi,\eta)$,
namely there exists a constant $c > 0$ such that 
	\begin{equation}
	\label{Uniform_Convexity}
	\nabla_{(\xi,\eta)}^2\hat{e}\geq\ c \; \mathbb{I} \, .
	\end{equation}
\begin{lemma}\label{lemmaenergydiss}
	\label{dissipation}	
	Let $U^0=(v^0,\xi^0,\eta^0),$ $U=(v,\xi,\eta)$ $\in(L^2\times L_{p,q,\rho}\times L^{\ell})(\mathbb{T}^3)$ and set
	$I(U)=I(v,\xi,\eta)$ given by \eqref{I(U)}.
	If the mapping $(\xi,\eta)\mapsto\hat{e}(\xi,\eta)$ is uniformly convex 
	it holds
	\begin{align}
	\label{relative_energy_estimate}
	\int\left(I(U)+c|U-U^0|^2+h\hat{\theta}(\xi,\eta)\frac{r}{\hat{\theta}(\xi^0,\eta^0)}\right)\:dx\leq\int I(U^0)\:dx,
	\end{align}
	for some numerical constant $c.$ As a result, there exist constants $C=C(U^0,r)$ and $E=E(U^0)$ such that
	\begin{align}
	\begin{split}
	\label{uniform_energy_estimate6}
	\sup\limits_j&\left(\|v^j\|^2_{L^2}+\int\hat{e}(\xi^j,\eta^j)\:dx+C\int h\hat{\theta}(\xi^j,\eta^j)\:dx\right)\\
	&+\sum_{j=1}^{\infty}\left(\|v^j-v^{j-1}\|^2_{L^2}+\|\xi^j-\xi^{j-1}\|^2_{L^2}+\|\eta^j-\eta^{j-1}\|^2_{L^2}\right)\leq\:E.
	\end{split}
	\end{align}
\end{lemma}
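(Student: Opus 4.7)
The plan is to make rigorous the formal computation already displayed in \eqref{relative_energy_identity}, then integrate over $\mathbb{T}^3$ and iterate in $j$. Starting from the exact Taylor expansion
$$
I(U) - I(U^0) = v_i(v-v^0)_i + \pge(\xi-\xi^0)^B + \frac{\partial\hat e}{\partial\eta}(\xi,\eta)(\eta-\eta^0) - I(U^0|U),
$$
I would insert the discrete equations satisfied by the minimizer: the constraints \eqref{lemma_weak_formulation} for the $\xi$ and $\eta$ differences, and the Euler--Lagrange equation \eqref{E-L_min} for $v-v^0$. Upon integration over $\mathbb{T}^3$, the spatial terms combine into the pure divergence $\partial_\alpha(\pge\,\phz\,v_i)$ by virtue of the null-Lagrangian identity $\partial_\alpha(\partial\Phi^B/\partial F_{i\alpha}(F^0))=0$, which holds in the distributional sense via Lemma \ref{Lemma_null_lagrangian_WEAK}. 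Periodicity annihilates this divergence, and the uniform convexity hypothesis \eqref{Uniform_Convexity} then bounds the relative quantity pointwise by $I(U^0|U)\geq c|U-U^0|^2$, yielding \eqref{relative_energy_estimate}.

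For the second assertion I would apply the single-step estimate between $U^{j-1}$ and $U^j$ and sum for $j=1,\dots,N$. The integrals $\int I(U^j)\,dx$ telescope, and after rearrangement one obtains
$$
\int I(U^N)\,dx + c\sum_{j=1}^{N}\|U^j-U^{j-1}\|_{L^2}^2 + h\sum_{j=1}^{N}\!\int \hat\theta(\xi^j,\eta^j)\frac{r}{\hat\theta(\xi^{j-1},\eta^{j-1})}\,dx \leq \int I(U^0)\,dx.
$$
Assuming $r\geq 0$ (so the radiative contribution remains on the left) and using positivity of $\hat\theta$ on bounded states, one extracts a lower bound of the form $C\,h\int \hat\theta(\xi^j,\eta^j)\,dx$ with a constant $C$ depending on $U^0$ and $r$. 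Combined with the coercive lower bound in \eqref{growth.con.1} and the elementary induction $\|v^j\|_{L^2}^2\leq 2\|v^{j-1}\|_{L^2}^2+2\|v^j-v^{j-1}\|_{L^2}^2$, this produces the uniform estimate \eqref{uniform_energy_estimate6} with $E=E(U^0)$.

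The principal obstacle is rigour in the first step: the weak formulations \eqref{lemma_weak_formulation} and \eqref{E-L_min} admit only test functions $\phi\in C^\infty(\mathbb{T}^3)$, whereas the Taylor computation demands testing against the iterates themselves, which have merely $L^p$ or $L^2$ regularity. I would therefore mollify $v$, $\xi$, $\eta$ and $F^0$ in space, perform the algebraic manipulation at the smooth level (where the pointwise null-Lagrangian identity is immediate), and pass to the limit using the integrability bound \eqref{bound_p_prime} established in Theorem \ref{minimization}: it delivers $\pge\,\phz\in L^{p'}$, and since $p'\leq 2\leq p$ for $p\geq 4$ every product pairs into $L^1$. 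The coupling of a mollified null-Lagrangian derivative with the velocity $v$ in the divergence term mirrors the convergences \eqref{convergence_cof}--\eqref{convergence_det} already exploited in Lemma \ref{Lemma_null_lagrangian_WEAK}, which is precisely the mechanism by which the formal identity \eqref{relative_energy_identity} becomes rigorous.
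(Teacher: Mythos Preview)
Your proposal is correct and follows essentially the same approach as the paper: Taylor expand $I$ about $U$, insert the discrete equations \eqref{discrete_thermoelasticity}, use the null-Lagrangian identity (via Lemma \ref{Lemma_null_lagrangian_WEAK}) to produce a pure divergence, integrate, and then exploit uniform convexity.

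There is one point worth noting on the rigour step. You propose to mollify everything and pass to the limit. The paper takes a more direct route: it observes that the discrete equations themselves furnish weak differentiability, namely $\partial_\alpha v_i = (F_{i\alpha}-F^0_{i\alpha})/h \in L^p$ from \eqref{discrete_thermoelasticity}$_1$ and $\partial_\alpha\big(\pge\,\phz\big)\in L^2$ from \eqref{discrete_thermoelasticity}$_2$. Combined with the $L^{p'}$ bound \eqref{bound_p_prime} and $v\in W^{1,p}$ by Poincar\'e, this places every term of the Leibniz expansion in $L^1$ directly, so no approximation layer is needed. Your mollification argument is also valid, just slightly less economical.

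For the iterated estimate \eqref{uniform_energy_estimate6}, the paper simply writes \eqref{relative_energy_estimate} for $(U^{j-1},U^j)$, rearranges, and sums in $j$; the telescoping of $\int I(U^j)\,dx$ immediately controls $\sup_j\big(\tfrac{1}{2}\|v^j\|_{L^2}^2+\int\hat e(\xi^j,\eta^j)\,dx\big)$. Your ``elementary induction'' $\|v^j\|_{L^2}^2\le 2\|v^{j-1}\|_{L^2}^2+2\|v^j-v^{j-1}\|_{L^2}^2$ is unnecessary (and if actually iterated would produce an exponential factor in $j$); the sup bound on $\|v^j\|_{L^2}^2$ comes for free from the telescoped energy.
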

\begin{proof}
	We calculate
	\begin{align}
	\label{lemma_2_calculation}
	\begin{split}
	I(U)-&I(U^0)-DI(U)\cdot(U-U^0)\\
	&=\frac{|v|^2}{2}-\frac{|v^0|^2}{2}+\hat{e}(\xi,\eta)-\hat{e}(\xi^0,\eta^0)-(v,\hat{e}_{\xi}(\xi,\eta),\hat{e}_{\eta}(\xi,\eta))\cdot(v-v^0,\xi-\xi^0,\eta-\eta^0)
	\end{split}
	\end{align}
	then using \eqref{discrete_thermoelasticity}, \eqref{e:constitutive_functions} and the null-Lagrangian property \eqref{Euler-Lagrange} we further compute
	\begin{align}
	\label{lemma2_calculation}
	\begin{split}
	\frac{1}{h}DI(U)&\cdot(U-U^0)=\frac{1}{h}\left(v_i(v_i-v_i^0)+\pge(\xi-\xi^0)^B+\frac{\partial\hat{e}}{\partial\eta}(\xi,\eta)(\eta-\eta^0)\right)\\
	&=v_i\partial_{\alpha}\left(\pge\phz\right)+\pge\partial_{\alpha}\left(\phz v_i\right)+\hat{\theta}(\xi,\eta)\frac{r}{\hat{\theta}(\xi^0,\eta^0)}\\
	&=v_i\partial_{\alpha}\left(\pge\phz\right)+\pge\phz \partial_{\alpha}v_i+\hat{\theta}(\xi,\eta)\frac{r}{\hat{\theta}(\xi^0,\eta^0)}\\
	&=\partial_{\alpha}\left(\pge\phz v_i\right)+\hat{\theta}(\xi,\eta)\frac{r}{\hat{\theta}(\xi^0,\eta^0)}.
	\end{split}
	\end{align}
	Observe that the third equality holds as a result of Lemma \ref{Lemma_null_lagrangian_WEAK}.
	To validate the last equality in the regularity class $(L^2\times L_{p,q,\rho}\times L^{\ell})(\mathbb{T}^3),$ we infer that the functions 
	$\ds\pge\phz$ and $v_i$ are weakly differentiable with 
	\begin{align}
	\label{lem32p}
	\partial_{\alpha}\left(\pge\phz\right)\in L^2(\mathbb{T}^3) \quad\text{and}\quad \partial_{\alpha}v_i\in L^p(\mathbb{T}^3)\,.
	\end{align}
	The first statement is due to the fact $v_i-v_i^0\in L^2(\mathbb{T}^3)$ and equation \eqref{discrete_thermoelasticity}$_2$ while the second comes from
	equation \eqref{discrete_thermoelasticity}$_1$ and in particular $$\frac{F_{i\alpha}-F_{i\alpha}^0}{h}=\partial_{\alpha}v_i$$ given that
	$F_{i\alpha}-F_{i\alpha}^0\in L^p(\mathbb{T}^3).$ Bound \eqref{bound_p_prime}
	implies 
	\begin{align*}
	\left|\pge\phz\right|^{p'}
	&\leq C \left(|F^0|^p+|F|^p+|\zeta|^q+|w|^{\rho}+|\eta|^{\ell}+1\right),
	\end{align*}
	therefore 
	\begin{align*}
	\pge\phz\in L^{p'}(\mathbb{T}^3).
	\end{align*}
	This along with \eqref{lem32p}$_2$ asserts that the product $$\ds \pge\phz \partial_{\alpha}v_i\in L^1(\mathbb{T}^3)$$ and similarly, because of \eqref{lem32p}$_1$,
	$$\ds\partial_{\alpha}v_i\left(\pge\phz\right)\in L^1(\mathbb{T}^3).$$ Also, due to \eqref{lem32p}$_2$ by Poincar\'{e}'s inequality we get that $v_i\in W^{1,p}(\mathbb{T}^3)$
	so that $v_i\in L^2(\mathbb{T}^3)\cap W^{1,p}(\mathbb{T}^3).$ This ensures the term $$\pge\phz v_i\in L^1(\mathbb{T}^3)$$
	and validates computation \eqref{lemma2_calculation}.

	Now observe that $I$ is uniformly convex and as a result 
	\begin{align*}
	\int I(U)&-I(U^0)-DI(U)\cdot(U-U^0)\:dx\\
	&=-\int\int_0^1\int_0^1 (1-s) D^2 I(U-\tau(1-s)(U-U^0))\cdot(U-U^0,U-U^0)\:ds\:d\tau\:dx\\
	&\leq-c\int|U-U^0|^2\:dx\,,
	\end{align*}
	where $\ds c=\min_{\mathcal{C}}\{D^2I(U)\}$. Therefore \eqref{lemma_2_calculation} and \eqref{lemma2_calculation} yield the bound
	\begin{small}
		\begin{align*}
		\int\left(\frac{|v|^2}{2}-\frac{|v^0|^2}{2}+\hat{e}(\xi,\eta)-\hat{e}(\xi^0,\eta^0)
		-h\partial_{\alpha}\left(\pge\phz v_i\right)-h\hat{\theta}(\xi,\eta)\frac{r}{\hat{\theta}(\xi^0,\eta^0)}\right)\:dx\\
		\leq -c\int|U-U^0|^2\:dx
		\end{align*}
	\end{small}
	\newline
	and since the divergence term integrates to zero, we obtain \eqref{relative_energy_estimate}.
	Finally, uniform bound \eqref{uniform_energy_estimate6} follows by simply rearranging the terms in \eqref{relative_energy_estimate} written for $(v^j,\xi^j,\eta^j)$
	and $(v^{j-1},\xi^{j-1},\eta^{j-1}),$ that is
	\begin{align*}
	\int\left(\frac{|v^j|^2}{2}+\hat{e}(\xi^j,\eta^j)+c|U^j-U^{j-1}|^2+h\hat{\theta}(\xi^j,\eta^j)\frac{r}{\hat{\theta}(\xi^{j-1},\eta^{j-1})}\right)\:dx\\
	\leq\int\left(\frac{|v^{j-1}|^2}{2}+\hat{e}(\xi^{j-1},\eta^{j-1})\right)\:dx
	\end{align*}
	and summing in $j.$
\end{proof}

As a corollary to Theorem \ref{minimization} and Lemma \ref{dissipation} we can show that the minimizer also satisfies the energy inequality
\eqref{dicrete_energy_ineq} in the sense of distributions.
\begin{corollary}
	\label{minimizer_energy}
	The minimizer constructed in Theorem \ref{minimization} satisfies the energy inequality
	\begin{align}
	\label{weak_enery_inequality}
	\begin{split}
	\int\phi\frac{1}{h}\left(\frac{1}{2}|v|^2+\right.&\left.\hat{e}(\xi,\eta)-\frac{1}{2}|v^0|^2-\hat{e}(\xi^0,\eta^0)\right)\:dx\leq\\
	&\leq-\int\left(\pge\phz v_i\right)\partial_{\alpha}\phi\:dx
	+\int\phi\:\hat{\theta}(\xi,\eta)\frac{r}{\hat{\theta}(\xi^0,\eta^0)}\:dx,
	\end{split}
	\end{align}
	where $0 \le \phi\in C^{\infty}(\mathbb{T}^3).$
\end{corollary}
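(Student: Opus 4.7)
The strategy is to combine the pointwise convexity of $I$ with the distributional identity established during the proof of Lemma \ref{dissipation}, tested against a nonnegative smooth function $\phi$ rather than the constant $1$ that produced \eqref{relative_energy_estimate}. Since $I(v,\xi,\eta)=\tfrac{1}{2}|v|^2+\hat{e}(\xi,\eta)$ is convex, one has pointwise a.e.\
\[
I(U)-I(U^0)\leq DI(U)\cdot(U-U^0),
\]
so multiplying by $\phi/h$ with $\phi\geq 0$ and integrating yields
\[
\int\frac{\phi}{h}\bigl(I(U)-I(U^0)\bigr)\,dx \;\leq\; \int\frac{\phi}{h}\,DI(U)\cdot(U-U^0)\,dx,
\]
whose left-hand side is precisely the left-hand side of \eqref{weak_enery_inequality}.

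For the right-hand side, the key ingredient is the chain of equalities \eqref{lemma2_calculation} in the proof of Lemma \ref{dissipation}, which established the distributional identity
\[
\frac{1}{h}DI(U)\cdot(U-U^0)=\partial_{\alpha}\bigl(\pge\phz v_i\bigr)+\hat{\theta}(\xi,\eta)\frac{r}{\hat{\theta}(\xi^0,\eta^0)}.
\]
The regularity verifications carried out at the end of that proof, namely $\pge\phz\in L^{p'}(\mathbb{T}^3)$, $v_i\in W^{1,p}(\mathbb{T}^3)$ and $\partial_\alpha v_i\in L^p(\mathbb{T}^3)$, together with the weak null-Lagrangian Lemma \ref{Lemma_null_lagrangian_WEAK}, ensure that this identity is meaningful in $\mathcal{D}'(\mathbb{T}^3)$. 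Testing it against $\phi\in C^{\infty}(\mathbb{T}^3)$ and integrating by parts the divergence term produces
\[
\int\frac{\phi}{h}\,DI(U)\cdot(U-U^0)\,dx = -\int\pge\phz v_i\,\partial_{\alpha}\phi\,dx + \int\phi\,\hat{\theta}(\xi,\eta)\frac{r}{\hat{\theta}(\xi^0,\eta^0)}\,dx,
\]
which is exactly the right-hand side of \eqref{weak_enery_inequality}. Chaining the two displays delivers the claim.

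The only technical point is the passage from the pointwise identity \eqref{lemma2_calculation} to its validity against a general smooth $\phi$. An alternative, fully self-contained route proceeds directly through the weak forms: inserting $\psi=\phi v_i$ in the Euler--Lagrange relation \eqref{E-L_min}, $\psi=\phi\,\pge$ in \eqref{lemma_weak_formulation}$_1$, and $\psi=\phi\,\hat{\theta}(\xi,\eta)$ in \eqref{lemma_weak_formulation}$_2$, summing the three resulting identities, and using Leibniz' rule together with Lemma \ref{Lemma_null_lagrangian_WEAK} to reassemble the divergence term $\partial_{\alpha}(\pge\phz v_i)$. Either route is essentially bookkeeping once the regularity already established in Lemma \ref{dissipation} is in hand; in particular, the uniform convexity hypothesis used in Lemma \ref{dissipation} is not needed here, as the nonnegative quadratic dissipation term appearing in \eqref{relative_energy_estimate} is simply discarded when one converts the equality from convexity into an inequality.
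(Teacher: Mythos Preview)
Your proposal is correct and follows essentially the same approach as the paper: both use the convexity inequality $I(U)-I(U^0)\leq DI(U)\cdot(U-U^0)$ (equivalently $I(U^0|U)\ge 0$) and then invoke the computation \eqref{lemma2_calculation} from Lemma~\ref{dissipation} to convert $DI(U)\cdot(U-U^0)$ into the divergence-plus-source term, tested against a nonnegative $\phi$. Your remark that only convexity (not uniform convexity) is needed here is also in line with the paper's short argument.
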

\begin{proof}
	Notice that, using the notation of Lemma \ref{dissipation}
	\begin{equation}
	\label{corollary_EQN1}
	DI(U)\cdot(U-U^0)=I(U)-I(U^0)+I(U^0|U),
	\end{equation}
	this is \eqref{relative_I(U)}.	The last term in \eqref{corollary_EQN1}	is the quadratic part of the Taylor expansion. Inferring again
	that $I(U^0|U)\geq 0$ because $I(U)$ is convex, we get the inequality
	$$I(U)-I(U^0)\leq DI(U)\cdot(U-U^0)$$
	holding in the sense of distributions. Substituting from \eqref{I(U)} and \eqref{lemma2_calculation}, we immediately obtain
	\eqref{weak_enery_inequality}.
\end{proof}

%
%

\section{A dissipative measure-valued solution for polyconvex thermoelasticity}\label{3:SEC5}
Let $(v^j,\xi^j,\eta^j)$ on $\mathbb{T}^3$ be the iterates constructed by solving the minimization problem, given $(v^0,\xi^0,\eta^0),$ $j=0,1,2,\dots$ according to Theorem 
\ref{minimization}. Then $F^j$ are gradients if $F^0$ is a gradient and at each time step we construct a function $y^j:\mathbb{T}^3\to\mathbb{R}^3$ such that 
$\partial_{\alpha}y_i^j=F_{i\alpha}^j.$ Choosing $y^{-1}$ by extrapolation, the iterates satisfy $$\frac{1}{h}(y^j-y^{j-1})=v^j.$$ 

In what follows we fix the notation $Q_{T}=\mathbb{T}^3\times[0,T]$
and state our main result. The rest of this Section will serve to prove this Theorem.
\begin{theorem}
	\label{mainconv}
	Assume \eqref{Uniform_Convexity}, that for some $\delta > 0$
	\begin{equation}
	\label{lowerb}
	\hat \theta (\xi, \eta) = \frac{\del \hat e}{\del \eta}(\xi, \eta) \ge \delta >0 \,  ,
	\end{equation}
	and that the growth conditions  \eqref{growth.con.1}, \eqref{growth.con.2} and \eqref{growth.con.3}  are satisfied
	with exponents
	\begin{align}
	\label{exponents}
	\qquad\qquad p\geq 4 \, , \quad q\geq 2 \, , \quad \rho> 1 \, , \quad  \ell>1 \, . 
	\end{align}
	In the limit $h\to 0$, 	the variational scheme \eqref{discrete_thermoelasticity_j}, solved by the minimization method of Theorem \ref{minimization},
	generates a dissipative measure-valued solution for the adiabatic polyconvex thermoelasticity system
	(\ref{e:adiabatic_thermoelasticity_Phi}) consisting of a thermomechanical process
	$(y(t,x),\eta(t,x)):[0, T]\times\mathbb{T}^3\to\mathbb{R}^3\times\mathbb{R}$ for any $T > 0$,
	\begin{align}
	\label{e:regulatity.y} 
	y \in W^{1,\infty}(L^2(\mathbb{T}^3)) \cap L^\infty (W^{1,p} (\mathbb{T}^3)) \, , \quad \eta\in L^{\infty}(L^{\ell} (\mathbb{T}^3)   ) \,  ,
	\end{align}
	a parametrized family of probability measures $\bn=\bn_{(x,t)\in Q_{T}}$ and a nonnegative Radon measure $\bg\in\mathcal{M}^+(Q_{T}).$
	If $(v,F,\eta)$ denote the averages
		\begin{align*}
		F=\left\la\bn,\lambda_F \right\ra, 
		\quad v=\left\la\bn,\lv\right\ra, \quad \eta=\left\la\bn,\li\right\ra \, , 
		\end{align*} 
	then $\bn$ and $\bg$ satisfy
	\begin{align}
	\label{e:regulatity.xi-v-eta}
	F=\nabla y\in L^{\infty}(L^p), \quad  v=\partial_t y\in L^{\infty}(L^2) \, , 
	\end{align}
	$$\Phi(F)=(F, \mathrm{cof} F, \det F)\in L^{\infty}(L^p)\times L^{\infty}(L^q)\times L^{\infty}(L^{\rho}) \, , $$
	and the averaged equations
	\begin{align}
	\begin{split}
	\label{e:mv.sol.xi-v-eta}
	\partial_t \Phi^B(F)&=\partial_{\alpha}\left(\phv \right) \\ 
	\partial_t \left\la\bn,\li\right\ra &= \left\la\bn,\frac{r}{\hat{\theta}(\Phi(\lambda_F),\li)}\right\ra\\
	\partial_t \left\la\bn,\lvi\right\ra
	&=\partial_{\alpha}\left\la\bn,\frac{\partial \hat{e}}{\partial\xi^B}(\Phi(\lambda_F),\lambda_{\eta})\frac{\partial\Phi^B}{\partial F_{i\alpha}}(\lf)\right\ra
	\end{split}
	\end{align}
	in the sense of  distributions. Moreover, they satisfy the integrated form of the  averaged energy inequality,
	\begin{small}
		\begin{align}
		\begin{split}
		\label{e:mv.sol.Fvt.energy}
		-\int& \varphi(0)\left\la\bn,\frac{1}{2}|\lv|^2+\hat{e}(\Phi(\lambda_F),\li)\right\ra (x,0)\:dx \\ 
		&-\int_0^T
		\int  \varphi^{\prime}(t) \left(\left\la\bn,\frac{1}{2}|\lv|^2+\hat{e}(\Phi(\lambda_F),\li) \right\ra(x,t)  \, dx\:dt +\bg(dx\:dt) \right)  \\
		&\leq\int_0^T\int \left\la\bn,r\right\ra\varphi(t)\:dx\:dt,
		\end{split}
		\end{align}
	\end{small}
	holding for all $\varphi \in C^{\infty}_c([0,T))$, $\varphi\ge 0$.
\end{theorem}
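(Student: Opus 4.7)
\medskip

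\textbf{Plan for the proof.} My plan is to construct piecewise constant interpolants
\[
U^h(x,t) = U^j(x) \quad \text{for } t \in [jh,(j+1)h), \qquad \tau_h U^h(x,t) = U^{j-1}(x),
\]
rewrite the discrete system \eqref{discrete_thermoelasticity_j} and the discrete energy inequality \eqref{weak_enery_inequality} in weak form on $Q_T$, and pass to the limit $h\to 0$. The stability bound \eqref{uniform_energy_estimate6} together with the growth condition \eqref{growth.con.1} gives uniform control of $\|v^h\|_{L^\infty L^2}$, $\|F^h\|_{L^\infty L^p}$, $\|\zeta^h\|_{L^\infty L^q}$, $\|w^h\|_{L^\infty L^\rho}$, $\|\eta^h\|_{L^\infty L^\ell}$; the sum $\sum_j\|U^j-U^{j-1}\|_{L^2}^2 \le E$ yields the quantitative closeness
\[
\|U^h-\tau_h U^h\|_{L^2(0,T;L^2)}^2 \;=\; h\sum_j \|U^j-U^{j-1}\|_{L^2}^2 \;\le\; Eh \;\to\; 0,
\]
so the shifted and non-shifted sequences share the same weak-$\star$ limits and the same Young measure.

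Next, I would apply Ball's fundamental theorem on Young measures to extract a subsequence and a parametrized probability measure $\bn = \bn_{(x,t)}$ on the product space carrying $(v,F,\zeta,w,\eta)$ such that, for every Carath\'eodory integrand compatible with the growth \eqref{growth.con.1}, \eqref{growth.con.3}, nonlinear weak-$\star$ limits are represented as $\langle\bn,\cdot\rangle$. The discrete compatibility equation \eqref{discrete_thermoelasticity_j}$_1$ rewritten for the $F$-component reads $\frac{1}{h}(F^j-F^{j-1})=\nabla v^j$, which identifies $v=\partial_t y$ and $F=\nabla y$ in the limit, giving \eqref{e:regulatity.y}--\eqref{e:regulatity.xi-v-eta}. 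The null-Lagrangian structure \eqref{transportQIN} and Lemma \ref{Lemma_null_lagrangian_WEAK} imply that, if $\xi^0=\Phi(F^0)$, then $\xi^j=\Phi(F^j)$ for every $j$, so the Young measure is supported on the graph of $\lambda_\xi = \Phi(\lambda_F)$ and the \emph{first} of the averaged equations \eqref{e:mv.sol.xi-v-eta} follows from weak convergence of $v^h$ in $L^2$ combined with strong $L^p$-convergence of the cofactor/determinant factors $\frac{\partial\Phi^B}{\partial F_{i\alpha}}(\tau_h F^h)$ modulo the shift estimate above.

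For the momentum equation \eqref{e:Euler-Lagrange} one must pass to the limit in the product
\[
\frac{\partial\hat e}{\partial\xi^B}(\xi^j,\eta^j)\,\frac{\partial\Phi^B}{\partial F_{i\alpha}}(F^{j-1}),
\]
which mixes the time indices $j$ and $j-1$. Using the shift estimate to identify $\tau_h F^h$ and $F^h$ in the limit, and the growth condition \eqref{growth.con.3} to ensure equi-integrability of the product, the limit is represented by $\bigl\langle\bn,\frac{\partial\hat e}{\partial\xi^B}(\Phi(\lambda_F),\lambda_\eta)\frac{\partial\Phi^B}{\partial F_{i\alpha}}(\lambda_F)\bigr\rangle$, yielding \eqref{e:mv.sol.xi-v-eta}$_3$. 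The entropy equation \eqref{e:mv.sol.xi-v-eta}$_2$ follows from direct weak convergence together with the lower bound \eqref{lowerb} on $\hat\theta$, which guarantees that $r/\hat\theta(\xi^{j-1},\eta^{j-1})$ is uniformly bounded and that its weak limit can be represented against $\bn$ via the growth assumption \eqref{growth.con.2}.

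Finally, the integrated energy inequality \eqref{e:mv.sol.Fvt.energy} is obtained by testing \eqref{weak_enery_inequality} against $\varphi(t)\ge 0$, summing in $j$, and observing that the nonnegative convex integrand $\frac12|v^h|^2+\hat e(\xi^h,\eta^h)$ decomposes, in the weak-$\star$ sense of measures, as
\[
\Big\langle\bn,\tfrac12|\lv|^2+\hat e(\Phi(\lf),\li)\Big\rangle\,dx\,dt \;+\; \bg(dx\,dt),
\]
where $\bg\in\mathcal{M}^+(Q_T)$ captures the concentration defect, and the flux term at each time slice integrates to zero after spatial integration against $\varphi(t)$. I expect the \textbf{main obstacle} to be the rigorous handling of the cross-time nonlinear product in the momentum flux: one needs to combine the shift estimate $\|U^h-\tau_h U^h\|_{L^2L^2}^2\le Eh$ with the $L^{p'}$ bound \eqref{bound_p_prime} on $\frac{\partial\hat e}{\partial\xi^B}\frac{\partial\Phi^B}{\partial F_{i\alpha}}$ in a way that is compatible with the Young-measure representation, and to verify that the growth gap imposed by the exponents \eqref{exponents} is sufficient for equi-integrability of every nonlinear term that appears.
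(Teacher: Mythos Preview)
Your overall architecture is close to the paper's, but there is one genuine error that would break the argument as written. You claim that ``if $\xi^0=\Phi(F^0)$, then $\xi^j=\Phi(F^j)$ for every $j$, so the Young measure is supported on the graph of $\lambda_\xi=\Phi(\lambda_F)$.'' This is false at the discrete level. The scheme \eqref{discrete_thermoelasticity_j}$_1$ evaluates $\partial\Phi^B/\partial F_{i\alpha}$ at the \emph{previous} iterate $F^{j-1}$, so for the cofactor component one has
\[
\zeta^j_{k\gamma}=\zeta^{j-1}_{k\gamma}+h\,\partial_\alpha\bigl(\epsilon_{ijk}\epsilon_{\alpha\beta\gamma}F^{j-1}_{j\beta}v_i^j\bigr),
\]
whereas $\mathrm{cof}\,F^j$ is quadratic in $F^j=F^{j-1}+h\nabla v^j$ and therefore differs from $\zeta^j$ by an $O(h)$ term that does not vanish pointwise. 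Neither \eqref{transportQIN} nor Lemma~\ref{Lemma_null_lagrangian_WEAK} repairs this; those are continuous-time identities. The paper handles this by a separate mechanism: it introduces piecewise \emph{linear} interpolants $(V^h,\Xi^h,H^h)$ alongside the piecewise constant ones, and invokes Lemma~\ref{lemma.weak.minors_TIME_DERIVATIVES} (from \cite{MR1831179}) to show $\partial_t(Z^h-\mathrm{cof}\,F^h)\rightharpoonup 0$ and $\partial_t(W^h-\det F^h)\rightharpoonup 0$ in $\mathcal D'$, so that the constraint $\xi=\Phi(F)$ is recovered only \emph{in the limit}, not step by step.

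A related issue is your derivation of \eqref{e:mv.sol.xi-v-eta}$_1$. You invoke ``strong $L^p$-convergence of the cofactor/determinant factors $\frac{\partial\Phi^B}{\partial F_{i\alpha}}(\tau_hF^h)$,'' but no such strong convergence is available; you only have weak-$\ast$ bounds. The paper instead obtains \eqref{e:mv.sol.xi-v-eta}$_1$ as a purely kinematic statement for the limiting deformation $y$: once $Y^h\to y$ strongly in $L^2$ and $F^h\rightharpoonup\nabla y$, the weak continuity of null-Lagrangians (Ball \cite{MR0475169} and \cite[Lemmas 3--5]{MR1831179}) gives $(\mathrm{cof}\,F^h,\det F^h)\rightharpoonup(\mathrm{cof}\,F,\det F)$ and the transport identities \eqref{transiden} directly, with no Young-measure averaging and no strong convergence needed. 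Your treatment of the momentum, entropy, and energy limits is otherwise in line with the paper's, including the use of the shift estimate to identify the Young measures of $U^h$ and $\tau_hU^h$ and the introduction of the concentration measure $\bg$ for the energy.
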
 

\smallskip
\tcb{ The notation $\big\langle \bn , \psi (\lambda_v ,  \lambda_\xi , \lambda_\eta ) \big \rangle$ is used to denote the action of the Young measure
	$\bn$ on the function $\psi (v, \xi, \eta)$, with $\xi = (F, \zeta, w)$ the vector of extended variables. The definition of $\nu$ is recalled in the text following 
	Lemma \ref{lemma.weak.minors_TIME_DERIVATIVES}.}

We define $(V^h,\Xi^h,H^h)$ to be the approximate solution constructed via piecewise linear interpolation of the iterates
$(v^j(x),\xi^j(x),\eta^j(x)),$ $j=1,\dots,N:$
\begin{align}
\label{pws_linear}
\begin{split}
V^h(t)=\sum_{j=1}^{\infty}\chi^j(t)\left(v^{j-1}+\frac{t-h(j-1)}{h}(v^j-v^{j-1})\right)\\
\Xi^h(t)=(F^h,Z^h,W^h)(t)=\sum_{j=1}^{\infty}\chi^j(t)\left(\xi^{j-1}+\frac{t-h(j-1)}{h}(\xi^j-\xi^{j-1})\right)\\
H^h(t)=\sum_{j=1}^{\infty}\chi^j(t)\left(\eta^{j-1}+\frac{t-h(j-1)}{h}(\eta^j-\eta^{j-1})\right).
\end{split}
\end{align}
We also define $(v^h,\xi^h,\eta^h)$ to be the piecewise constant interpolation
\begin{align}
\label{pws_constant}
\begin{split}
v^h(t)=\sum_{j=1}^{\infty}\chi^j(t)v^j\\
\xi^h(t)=(f^h,\zeta^h,w^h)(t)=\sum_{j=1}^{\infty}\chi^j(t)\xi^j\\
\eta^h(t)=\sum_{j=1}^{\infty}\chi^j(t)\eta^j
\end{split}
\end{align}
where in both cases $\chi^j(t):=\chi_{[(j-1)h,jh]}$ stands for the characteristic function over the time interval $[(j-1)h,jh]$. For convenience, let us also denote 
\begin{align*}
v^h(t-h)=\sum_{j=1}^{\infty}\chi^j(t)v^{j-1}\\
\xi^h(t-h)=(f^h,\zeta^h,w^h)(t-h)=\sum_{j=1}^{\infty}\chi^j(t)\xi^{j-1}\\
\eta^h(t-h)=\sum_{j=1}^{\infty}\chi^j(t)\eta^{j-1}.
\end{align*}
Subsequently, one can also write a piecewise linear approximation of the motion as follows
\begin{align}
\label{pws_linear_Y}
Y^h(t)=\sum_{j=1}^{\infty}\chi^j(t)\left(y^{j-1}+\frac{t-h(j-1)}{h}(y^j-y^{j-1})\right).
\end{align}
Note that a direct computation gives
\begin{align}
\label{derivatives_Y^h}
\partial_t Y_i^h=v_i^h \quad\text{and}\quad \partial_{\alpha} Y_i^h=F_{i\alpha}^h.
\end{align}
Viewing $\{(V^h,\Xi^h,H^h)\}$ and $\{(v^h,\xi^h,\eta^h)\}$ as sequences in $h>0$, our goal is to prove that in the limit as the 
time step $h\to 0,$ we can obtain a dissipative measure valued solution to adiabatic thermoelasticity.
Lemma \ref{dissipation} and in particular bound \eqref{uniform_energy_estimate6} along with 
\eqref{growth.con.1} imply the uniform bound on the iterates
$(v^j,\xi^j,\eta^j):$
\begin{align*}
\sup\limits_j\int|v^j|^2+\left(|F^j|^p+|\zeta^j|^q+|w^j|^{\rho}+|\eta^j|^{\ell}-1\right)dx
\leq \sup\limits_j\left(\|v^j\|^2_{L^2}+\int\hat{e}(\xi^j,\eta^j)dx\right)\leq\:E.
\end{align*}
By this estimate and by convexity of the $L^p$ norms involved, one arrives to the conclusion that the sequences
\begin{align}
\label{h-sequenceses-boundness}
\{(V^h,\Xi^h,H^h)\}\;\;\text{and}\;\;\{(v^h,\xi^h,\eta^h)\}\;\;\text{are uniformly bounded in}\;\;
L^{\infty}(L^2\times L_{p,q,\rho}\times L^{\ell}(\mathbb{T}^3)).
\end{align} 
Therefore, up to a subsequence, they converge weakly-$\ast$ 
in $L_{loc}^{\infty}((L^2\times L_{p,q,\rho}\times L^{\ell})(\mathbb{T}^3)).$
Additionally \eqref{h-sequenceses-boundness} also implies that the sequence $\{Y^h\}$ is 
bounded in $W^{1,\infty}(L^2(\mathbb{T}^3)) \cap L^\infty (W^{1,p} (\mathbb{T}^3))$
which in turn suggests that $Y^h$ converges weakly in $H_{loc}^1([0,\infty)\times\mathbb{T}^3)$
along subsequences and by Rellich's Theorem converges strongly in $L^2.$

In \eqref{e:mv.sol.xi-v-eta}, the first equation holds in the classical weak sense. This 
is an immediate implication of the weak continuity of null-Lagrangians \cite[Lemma 6.1]{MR0475169}
and \cite[Lemmas 4,5]{MR1831179} according to which in the regularity class \eqref{e:regulatity.y} and for exponents
as in \eqref{exponents}, $\mathrm{cof}\nabla y$ and $\mathrm{det}\nabla y$ are weakly continuous: 
Written explicitly as in \eqref{cofactor_d=3},~\eqref{det_d=3} (for $d=3$), we have that if a sequence 
\begin{align*}
y_n \; \mbox{is bounded in } \; W^{1,\infty}(L^2(\mathbb{T}^3)) \cap L^\infty (W^{1,p} (\mathbb{T}^3)) \, ,
\end{align*}
then along a subsequence
\begin{align*}
(F_n,\mathrm{cof}F_n,\det F_n)\rightharpoonup(F,\mathrm{cof}F,\det F), 
\:\:\text{weak-$\ast$ in }\:\: L^{\infty}(L^p)\times L^{\infty}(L^q)\times L^{\infty}(L^{\rho})
\end{align*}
for $p\ge 4,\:q\ge\frac{p}{p-1},\:q\ge\frac{4}{3}$, $\rho>1$ \cite[Lemma 3]{MR1831179}.
This result is valid under the regularity conditions \eqref{e:regulatity.y} and for exponents: $p\geq 4,$ $q\geq2$, $\rho > 1$,
and it allows to take the limit
\begin{align}
\label{h_limit_F-cofF-detF}
\begin{split}
(F^h,\mathrm{cof}F^h,\det F^h)&\rightharpoonup(F, \mathrm{cof}F,\det F)  ,\:\text{weak-$\ast$ in\:} 
L^{\infty}(L_{p,q,\rho})(\mathbb{T}^3)\\
(f^h,\mathrm{cof}f^h,\det f^h)&\rightharpoonup(F, \mathrm{cof}F,\det F)  ,\:\text{weak-$\ast$ in\:} 
L^{\infty}(L_{p,q,\rho})(\mathbb{T}^3).
\end{split}
\end{align}
In addition, the geometric constraints \eqref{transportQIN} are stable under weak convergence 
\begin{align*}
Y^h\to y&\quad\text{strongly in\:}L^2_{loc}(\mathbb{T}^3),\:\text{a.e in\:}x\\
f_{i\alpha}^h\rightharpoonup F_{i\alpha},\:\:F_{i\alpha}^h\rightharpoonup F_{i\alpha}
&\quad\text{weak-$\ast$ in\:}L^{\infty}_{loc}(L^p(\mathbb{T}^3)),\\
v_i^h\rightharpoonup v
&\quad\text{weak-$\ast$ in\:}L^{\infty}_{loc}(L^2(\mathbb{T}^3))
\end{align*}
in the so-called regularity framework \cite[Lemma 4]{MR1831179}. In turn, this allows us to pass to the limit in the 
transport identities
\begin{align*}
\partial_t F^h=&\partial_\alpha v_i^h\\
\partial_t(\mathrm{cof}F^h)_{k \gamma}
&=\partial_t\partial_{\alpha}\left(\frac{1}{2}\epsilon_{ijk}\epsilon_{\alpha\beta\gamma}Y^h_{i}F^h_{j\beta}\right)
= \partial_{\alpha}(\epsilon_{ijk}\epsilon_{\alpha\beta\gamma}F^h_{j\beta}v^h_{i}),\\
\partial_t\det F^h
&=\partial_t\partial_\alpha\left(\frac{1}{3}Y^h_{i}(\mathrm{cof}F^h)_{i \alpha}\right)
=\partial_\alpha \big(\mathrm{cof}F^h_{i \alpha}v_i^h \big)
\end{align*}
and obtain 
\begin{equation}
\label{transiden}
\begin{aligned}
\partial_t F_{i\alpha}&=\partial_{\alpha}v_i \\ 
\partial_t\det F&=\partial_{\alpha}\bigl((\mathrm{cof} F)_{i\alpha}v_i\bigr)\\
\partial_t(\mathrm{cof} F)_{k\gamma}&=\partial_{\alpha}(\epsilon_{ijk}\epsilon_{\alpha\beta\gamma}F_{j\beta}v_i).
\end{aligned}
\end{equation}

Moreover, if $F^h,\:Z^h,\:W^h$ are as in \eqref{pws_linear} and produced by the minimization scheme then 
we have the following convergence result on their time derivatives:  
\begin{lemma} \label{lemma.weak.minors_TIME_DERIVATIVES}\cite[Lemma 5]{MR1831179}
	There holds:
	\begin{equation}
	\label{dt_Z,W_convergence}
	\begin{aligned}
	\partial_t\left(Z^h-\mathrm{cof} F^h\right)\rightharpoonup 0\\
	\partial_t\left(W^h-\det F^h\right)\rightharpoonup 0
	\end{aligned}
	\qquad \text{in the sense of distibutions on } Q_{T}.
	\end{equation}
\end{lemma}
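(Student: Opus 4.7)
\medskip

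\noindent\textbf{Proof plan.} The plan is to compute $\partial_t Z^h$ and $\partial_t \cof F^h$ pointwise on each subinterval $[(j-1)h,jh]$, exploit the null-Lagrangian structure to rewrite their difference in divergence form, and then integrate by parts in $x$ against a test function so that the summability provided by Lemma~\ref{dissipation} yields an $\sqrt{h}$-smallness factor.

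First, I would carry out the explicit computation. On $[(j-1)h,jh]$, set $s=(t-(j-1)h)/h$. Since $Z^h$ is piecewise linear and $\zeta$ corresponds to the cofactor block of $\xi$, the scheme \eqref{discrete_thermoelasticity_j}$_1$ and the Piola identity $\partial_\alpha\bigl(\epsilon_{mip}\epsilon_{\gamma\alpha\sigma}F^{j-1}_{p\sigma}\bigr)=0$ (valid because Step 4 of Theorem~\ref{minimization} preserves the gradient structure of $F^{j-1}$) give
\begin{equation*}
\partial_t Z^h_{m\gamma}=\epsilon_{mip}\epsilon_{\gamma\alpha\sigma}\,F^{j-1}_{p\sigma}\,\partial_\alpha v^j_i.
\end{equation*}
On the other hand, $\cof F^h$ is quadratic in $F^h$ and $\partial_t F^h=\partial_\alpha v^j$, so by the symmetry of the $\epsilon\epsilon$ contraction,
\begin{equation*}
\partial_t \cof F^h_{m\gamma}(t)=\epsilon_{mip}\epsilon_{\gamma\alpha\sigma}\,F^h_{p\sigma}(t)\,\partial_\alpha v^j_i.
\end{equation*}
Subtracting and using $F^{j-1}-F^h(t)=-s\,(F^j-F^{j-1})=-sh\,\partial v^j$, together with the identity $\epsilon_{\gamma\alpha\sigma}\partial_\alpha v_i\partial_\sigma v_p=\partial_\alpha(\epsilon_{\gamma\alpha\sigma}v_i\partial_\sigma v_p)$ (the second-derivative term drops by antisymmetry), I would obtain the key divergence representation
\begin{equation*}
\partial_t(Z^h-\cof F^h)_{m\gamma}(t)=-sh\,\partial_\alpha\!\bigl(\epsilon_{mip}\epsilon_{\gamma\alpha\sigma}\,v^j_i\,\partial_\sigma v^j_p\bigr).
\end{equation*}

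Testing against $\phi\in C^\infty_c(Q_T)$, integrating by parts in $x$, and substituting $\partial_\sigma v^j_p=(F^j-F^{j-1})_{p\sigma}/h$ to cancel one power of $h$, yields
\begin{equation*}
\Bigl|\int_0^T\!\!\int \phi\,\partial_t(Z^h-\cof F^h)_{m\gamma}\,dx\,dt\Bigr|\le C\,\|\nabla\phi\|_\infty\sum_j h\,\|v^j\|_{L^2}\,\|F^j-F^{j-1}\|_{L^2}.
\end{equation*}
Cauchy--Schwarz then gives $\sum_j h\|v^j\|_{L^2}\|F^j-F^{j-1}\|_{L^2}\le\sqrt{T\sup_j\|v^j\|_{L^2}^2}\cdot\sqrt{h\sum_j\|F^j-F^{j-1}\|_{L^2}^2}\le C\sqrt{h\,E}\to 0$, where the two uniform bounds come from \eqref{uniform_energy_estimate6}. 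This proves the first claim.

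For the determinant the computation is analogous but with $\det F^h$ cubic in $F^h$. Using $\partial_t\det F^h=(\cof F^h(t))_{i\alpha}\partial_\alpha v^j_i$ from the scheme identity, and $\partial_t W^h=(\cof F^{j-1})_{i\alpha}\partial_\alpha v^j_i$, the polarization of the cofactor yields
\begin{equation*}
\partial_t(W^h-\det F^h)=-sh\,\partial_\alpha\!\Bigl(\epsilon_{ikl}\epsilon_{\alpha\beta\gamma}v^j_i\partial_\beta v^j_k\Bigr)\cdot\tfrac{1}{2}(F^{j-1}+F^h)_{l\gamma},
\end{equation*}
after a second application of the divergence trick. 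Integrating by parts in $x$, the term in which $\partial_\alpha$ falls on $(F^{j-1}+F^h)_{l\gamma}$ vanishes thanks to $\epsilon_{\alpha\beta\gamma}\partial_\alpha F_{l\gamma}=0$ (the involution \eqref{constraint}), leaving only the term where $\partial_\alpha$ falls on $\phi$. One then bounds the resulting integral by H\"older with exponents $(p,2,r)$, $1/r=1/2-1/p$, and interpolates $\|F^j-F^{j-1}\|_{L^r}$ between $L^2$ (summable) and $L^p$ (uniform), and applies Cauchy--Schwarz as before; the energy estimate \eqref{uniform_energy_estimate6} closes the argument. The hardest part is this H\"older/interpolation step in the cubic case, which is why the hypothesis $p\ge 4$ enters here and why the null-Lagrangian trick that eliminates the $\partial_\alpha F^h$ contribution after the spatial integration by parts is essential.
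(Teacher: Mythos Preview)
The paper does not give its own proof of this lemma; it simply cites \cite[Lemma~5]{MR1831179}. Your strategy---computing $\partial_t Z^h$ and $\partial_t(\cof F^h)$ explicitly on each subinterval, using the null-Lagrangian structure to cast their difference as a spatial divergence, integrating by parts onto the test function, and closing via the $\ell^2$-summability of $\|F^j-F^{j-1}\|_{L^2}$ from \eqref{uniform_energy_estimate6}---is exactly the approach of the cited reference, and your cofactor argument is correct as written.

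Two points on the determinant step deserve attention. First, the claim that ``the term in which $\partial_\alpha$ falls on $(F^{j-1}+F^h)_{l\gamma}$ vanishes'' is only formal: $\partial_\alpha F$ is merely a distribution, so multiplying it by $v^j_i\,\partial_\beta v^j_k$ is not a priori meaningful. The clean fix is to observe that $\epsilon_{ikl}\epsilon_{\alpha\beta\gamma}\,\partial_\alpha v^j_i\,\partial_\beta v^j_k\,(F^{j-1}+F^h)_{l\gamma}$ is itself a null-Lagrangian (a mixed Jacobian of the three gradients $\nabla v^j,\nabla v^j,\nabla(y^{j-1}+Y^h)$) and hence a full divergence in the weak sense, exactly as in Lemma~\ref{Lemma_null_lagrangian_WEAK}; this bypasses any differentiation of $F$.

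Second, your interpolation step does not close at the endpoint $p=4$ that the paper allows. With $1/r=1/2-1/p$ one gets $r=2p/(p-2)$, and interpolating $\|F^j-F^{j-1}\|_{L^r}$ between $L^2$ and $L^p$ gives exponent $\theta=(p-4)/(p-2)$. At $p=4$ this is $\theta=0$, so no $L^2$-summability of the increments is extracted and the remaining sum $\sum_j h\,\|v^j\|_{L^2}\|F^j-F^{j-1}\|_{L^4}$ is merely bounded, not $o(1)$. You should either verify that \cite{MR1831179} actually needs $p>4$ at this point, or supply a sharper estimate for the borderline case.
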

The sequences $\{(V^h,\Xi^h,H^h)\},$ $\{(v^h,\xi^h,\eta^h)\}$ generate a parametrized family of probability measures
$$\bn_{(x,t)\in Q_{T}}\in\mathcal{P}(\mathbb{R}^3\times\mathbb{M}^{3\times 3}\times\mathbb{M}^{3\times 3}\times\mathbb{R}\times\mathbb{R})$$
given by the mapping $\bn:Q_{T}\ni(x,t)\mapsto\bn_{(x,t)}.$ Both sequences generate the same $\bn$ (see below) which
is a weakly$-\ast$ measurable, essentially bounded
map - we will be denoting this in short as $L^{\infty}_{weak}$- representing weak limits of the form
\begin{align}
\label{psi_weaklim_definition}
\text{wk-$\ast$-}\lim_{h\to 0}\psi(v_h,\xi_h,\eta_h)
=\text{wk-$\ast$-}\lim_{h\to 0}\psi(v_h,F_h,\zeta_h,w_h,\eta_h)=\la\bn,\psi(\lv, \lambda_\xi,\li)\ra,
\end{align}
for all continuous functions $\psi=\psi(\lv,\lf,\lz,\lw,\li)$ with polynomial growth
\begin{align}
\label{psi_weaklim_growth}
\lim_{|\lv|^2+|\lf|^p+|\lz|^q+|\lw|^{\rho}+|\li|^{\ell}\to\infty}
\frac{|\psi(\lv,\lf,\lz,\lw,\li)|}{|\lv|^2+|\lf|^p+|\lz|^q+|\lw|^{\rho}+|\li|^{\ell}}=0,
\end{align}
where in \eqref{psi_weaklim_definition} the notation $\la\bn,\cdot\ra$ stands for the average
$$\la\bn,\psi(\lv,\lf,\lz,\lw,\li)\ra=\int\psi(\lv,\lf,\lz,\lw,\li)\:\bn(d\lv,d\lf,d\lz,d\lw,d\li)$$
and $\lv\in\mathbb{R}^3,$ $\lf\in\mathbb{M}^{3\times 3},$ $\lz\in\mathbb{M}^{3\times 3},$ $\lw\in\mathbb{R},$ 
$\li\in\mathbb{R}.$

We now verify that the sequences $(V^h,\Xi^h,H^h)$ and $(v^h,\xi^h,\eta^h)$ generate the same Young 
measure. 
Observe that according to \eqref{uniform_energy_estimate6}
$$\sum_{j=1}^{\infty}\left(\|v^j-v^{j-1}\|^2_{L^2}+\|\xi^j-\xi^{j-1}\|^2_{L^2}+\|\eta^j-\eta^{j-1}\|^2_{L^2}\right)
\leq\:E,$$ so one can obtain the bound:
\begin{align*}
\|F^h-f^h\|_{L^2(Q_{T})}&\leq\left(\sum_{j=1}^{\infty}\int_{0}^{T}\chi^j(t)\left|\frac{t-h(j-1)}{h}\right|^2
\int|F^j-F^{j-1}|^2\:dxdt\right)^{1/2}\\
&\leq\left(\frac{h}{3}\:\sum_{j=1}^{\infty}\int|F^j-F^{j-1}|^2\:dx\right)^{1/2}\leq\sqrt{hE}.
\end{align*}
	and since
	\begin{equation*}
	\|F^h\|_{L^{p}(Q_{T})}+\|f^h\|_{L^{p}(Q_{T})}<C
	\end{equation*}
	we have
	\begin{align}
	\label{convergence_F_L^pbar}
	\|F^h-f^h\|_{L^{\bar{p}}(Q_{T})} 
	\to 0,\qquad \bar{p}\in[2,p) \, .
	\end{align}
	Similarly,  when $q \ge 2$, $\rho \ge 2$, $\ell \ge 2$, we deduce
	\begin{equation}
	\label{convergence_Z,V_L^pbar}
	\|Z^h-\zeta^h\|_{L^{\bar{q}}(Q_{T})}\to 0\quad \bar{q}\in[2,q),\qquad
	\|V^h-v^h\|_{L^2(Q_{T})}\to 0,
	\end{equation}
	\begin{equation*}
	\|W^h-w^h\|_{L^{\bar{\rho}}(Q_{T})}\to 0\quad \bar{\rho}\in[2,\rho),\qquad 
	\|H^h-\eta^h\|_{L^{\bar{\ell}}(Q_{T})}\to 0\quad \bar{\ell}\in[2,\ell).
	\end{equation*}
	Additionally, because $Q_{T}$ is a bounded domain, it also implies
	\begin{align*}
	\|W^h-w^h\|_{L^{\bar\rho}(Q_{T})}&\leq\|W^h-w^h\|_{L^2(Q_{T})},\qquad\text{if}\:\:\bar\rho\leq 2\\
	\|H^h-\eta^h\|_{L^{\bar\ell}(Q_{T})}&\leq\|H^h-\eta^h\|_{L^2(Q_{T})},\qquad\text{if}\:\:\bar\ell\leq 2 \, ,
	\end{align*}
	which implies that, even in the case $1 < \rho < 2$ or $1 < \ell < 2$, it still holds
\begin{equation}
\label{convergence_W,H_L^pbar}
\|W^h-w^h\|_{L^{\rho}(Q_{T})}\to 0 \, ,  \quad 
\|H^h-\eta^h\|_{L^{\ell}(Q_{T})}\to 0 \, .
\end{equation}
	We conclude
	$$
	(V^h - v_h ,\Xi^h - \xi_h ,H^h - \eta_h ) \to 0 \quad \mbox{in measure}
	$$ 
	and as a result of \cite[Lemma 5.3]{MR1741439} the sequences $(V^h  ,\Xi^h  ,H^h  )$ and $(v_h , \xi_h , \eta_h )$
generate the same Young measure. 

Therefore, in the class of representable functions,
\begin{align}
\label{psi_weak-lim_INXI}
\begin{split}
\text{wk-$\ast$-}\lim_{h\to 0}\psi(V^h,\Xi^h,H^h)&=\text{wk-$\ast$-}\lim_{h\to 0}\psi(v^h,\xi^h,\eta^h)\\
&=\text{wk-$\ast$-}\lim_{h\to 0}\psi(v^h(t-h),\xi^h(t-h),\eta^h(t-h))\\
&=\la\bn,\psi(\lv,\lx,\li)\ra.
\end{split}
\end{align}


	Equation \eqref{h_limit_F-cofF-detF} implies
	\begin{align}
	\la\bn , \Phi(\lf)\ra= \Phi (\la \bn, \lf \ra )= (F,\mathrm{cof}F,\det F) \, .
	\end{align}
	Because of the way the minimization scheme is constructed and Lemma \ref{lemma.weak.minors_TIME_DERIVATIVES}, 
	the constraints $\xi^B=\Phi^B(F)$ are inherited in the $h \to 0$ limit.
	Finally, by \cite[Lemma 4]{MR1831179}, the transport identities \eqref{transiden} are weakly continuous and \eqref{e:mv.sol.xi-v-eta}$_1$
	holds.

We continue the proof by writing equations \eqref{lemma_weak_formulation},\eqref{E-L_min} and \eqref{weak_enery_inequality}
for the iterates $(v^j,\xi^j,\eta^j)$ and $(v^{j-1},\xi^{j-1},\eta^{j-1}),$ namely
\begin{align*}
\int\phi\frac{1}{h}(\xi^j-\xi^{j-1})^B\:dx&=-\int\phzj v_i^j\partial_{\alpha}\phi\:dx\\
\int\phi\frac{1}{h}(\eta^j-\eta^{j-1})\:dx&=\int\phi\frac{r}{\hat{\theta}(\xi^{j-1},\eta^{j-1})}\:dx\\
\int\phi\frac{1}{h}(v_i^j-v_i^{j-1})\:dx&=-\int\pgej\phzj\partial_{\alpha}\phi\:dx
\end{align*}
and
\begin{small}
	\begin{align*}
	\int\phi\frac{1}{h}\left(\frac{1}{2}|v^j|^2+\right.&\left.\hat{e}(\xi^j,\eta^j)-\frac{1}{2}|v^{j-1}|^2-\hat{e}(\xi^{j-1},\eta^{j-1})\right)\:dx\leq\\
	&\leq-\int\left(\pgej\phzj v_i^j\right)\partial_{\alpha}\phi\:dx
	+\int\phi\:\hat{\theta}(\xi^j,\eta^j)\frac{r}{\hat{\theta}(\xi^{j-1},\eta^{j-1})}\:dx.
	\end{align*}
\end{small}
Observing that
\begin{align}
\label{dt-V-Xi-Eta}
\partial_tV^h=\frac{1}{h}(v^h-v^h(t-h)),\quad\partial_t\Xi^h=\frac{1}{h}(\xi^h-\xi^h(t-h)),\quad\partial_tH^h=\frac{1}{h}(\eta^h-\eta^h(t-h)),
\end{align}
we can rewrite \eqref{discrete_thermoelasticity}, \eqref{dicrete_energy_ineq} in terms of the sequences
$(V^h,\Xi^h,H^h),$ $(v^h,\xi^h,\eta^h)$ and $(v^h(t-h),\xi^h(t-h),\eta^h(t-h)):$
\begin{align}
\label{weakform:dtXi}
\int_{Q_{T}}\phi\partial_t(\Xi^h)^B\:dxdt&=-\int_{Q_{T}}\frac{\partial\Phi^B}{\partial F_{i\alpha}}(f^h(t-h))v_i^h\partial_{\alpha}\phi\:dxdt\\
\label{weakform:dteta}
\int_{Q_{T}}\phi\partial_tH^h\:dxdt&=\int_{Q_{T}}\phi\frac{r}{\hat{\theta}(\xi^h(t-h),\eta^h(t-h))}\:dxdt\\
\label{weakform:dtv}
\int_{Q_{T}}\phi\partial_tV^h\:dxdt
&=-\int_{Q_{T}}\frac{\partial\hat{e}}{\partial\xi^B}(\xi^h,\eta^h)
\frac{\partial\Phi^B}{\partial F_{i\alpha}}(f^h(t-h))\partial_{\alpha}\phi\:dxdt
\end{align}
where \eqref{weakform:dtXi} is a vector relation that splits into the following:
\begin{align*}
\int_{Q_{T}}\phi\partial_tF_{i\alpha}^h\:dxdt&=-\int_{Q_{T}}v_i^h\partial_{\alpha}\phi\:dxdt\\
\int_{Q_{T}}\phi\partial_tZ_{k\gamma}^h\:dxdt
&=-\int_{Q_{T}}\epsilon_{ijk}\epsilon_{\alpha\beta\gamma}f_{j\beta}^h(t-h)v_i^h\partial_{\alpha}\phi\:dxdt\\
\int_{Q_{T}}\phi\partial_tW^h\:dxdt
&=-\int_{Q_{T}}\bigl((\mathrm{cof}f^h(t-h)\bigr)_{i\alpha}v_i^h\partial_{\alpha}\phi\:dxdt.
\end{align*}
Finally the energy inequality becomes
\begin{align}
\label{weakform:energy}
\begin{split}
\int_{Q_{T}}\phi\partial_t\left(\frac{1}{2}|V^h|^2+\right.&\left.\vphantom{\frac{1}{2}}\hat{e}(\Xi^h,H^h)\right)\:dxdt\\
&\leq-\int_{Q_{T}}\left(\frac{\partial\hat{e}}{\partial\xi^B}(\xi^h,\eta^h)
\frac{\partial\Phi^B}{\partial F_{i\alpha}}(f^h(t-h)) v_i^h\right)\partial_{\alpha}\phi\:dxdt\\
&+\int_{Q_{T}}\phi\:\hat{\theta}(\xi^h,\eta^h)\frac{r}{\hat{\theta}(\xi^h(t-h),\eta^h(t-h))}\:dxdt
\end{split}
\end{align}
for any $0<\phi=\phi(x,t)\in C_c^{\infty}(\mathbb{T}^3\times[0,T)).$

The derivation of \eqref{weakform:dtXi}, \eqref{weakform:dteta} and \eqref{weakform:dtv} from \eqref{discrete_thermoelasticity} and 
\eqref{dt-V-Xi-Eta} is straightforward,. By contrast, the derivation of \eqref{weakform:energy} is a nontrivial variant of retrieving
\eqref{dicrete_energy_ineq} from \eqref{discrete_thermoelasticity}. Let $I(V^h,\Xi^h,H^h)$ as in \eqref{I(U)}, then 
using \eqref{weakform:dtXi}, \eqref{weakform:dteta} and \eqref{weakform:dtv} we can calculate
\begin{align*}
\partial_t&\left(\frac{1}{2}|V^h|^2+\hat{e}(\Xi^h,H^h)\right)=\\
&=V_i^h\partial_t V_i^h+\frac{\partial\hat{e}}{\partial\xi^B}(\Xi^h,H^h)\partial_t(\Xi^h)^B+\frac{\partial\hat{e}}{\partial\eta}(\Xi^h,H^h)\partial_tH^h\\
&=V_i^h\frac{v_i^h-v_i^h(t-h)}{h}+\frac{\partial\hat{e}}{\partial\xi^B}(\Xi^h,H^h)\frac{(\xi^h-\xi^h(t-h))^B}{h}
+\frac{\partial\hat{e}}{\partial\eta}(\Xi^h,H^h)\frac{\eta^h-\eta^h(t-h)}{h}\,.
\end{align*}
\tcb{
	Since $I(v,\xi, \eta)$ is a convex function and $(V^h,\Xi^h,H^h)$ is by its definition a convex combination of $(v^h,\xi^h,\eta^h)$ 
	and $(v^h(t-h),\xi^h(t-h),\eta^h(t-h))$, we have the inequality
	$$
	\begin{aligned}
	&V_i^h\frac{v_i^h-v_i^h(t-h)}{h}+\frac{\partial\hat{e}}{\partial\xi^B}(\Xi^h,H^h)\frac{(\xi^h-\xi^h(t-h))^B}{h}
	+\frac{\partial\hat{e}}{\partial\eta}(\Xi^h,H^h)\frac{\eta^h-\eta^h(t-h)}{h}
	\\
	&\le v_i^h\frac{v_i^h-v_i^h(t-h)}{h}+\frac{\partial\hat{e}}{\partial\xi^B}(\xi^h,\eta^h)\frac{(\xi^h-\xi^h(t-h))^B}{h}+\hat{\theta}(\xi^h,\eta^h)\frac{\eta^h-\eta^h(t-h)}{h}
	\end{aligned}
	$$
	Next, proceeding as in the derivation of \eqref{lemma2_calculation}, we conclude
	\begin{small}
		\begin{align*}
		\partial_t&\left(\frac{1}{2}|V^h|^2+\hat{e}(\Xi^h,H^h)\right)\\
		&\le v_i^h\frac{v_i^h-v_i^h(t-h)}{h}+\frac{\partial\hat{e}}{\partial\xi^B}(\xi^h,\eta^h)\frac{(\xi^h-\xi^h(t-h))^B}{h}+\hat{\theta}(\xi^h,\eta^h)\frac{\eta^h-\eta^h(t-h)}{h}\\
		&=\partial_{\alpha}\left(\frac{\partial\hat{e}}{\partial\xi^B}(\xi^h,\eta^h)\frac{\partial\Phi^B}{\partial F_{i\alpha}}(f^h(t-h)) v_i^h\right)
		+\hat{\theta}(\xi^h,\eta^h)\frac{r}{\hat{\theta}(\xi^h(t-h),\eta^h(t-h))}.
		\end{align*}
	\end{small}
}

This leads to \eqref{weakform:energy}, which we can now rewrite for a test function
depending solely on time, namely $0\leq\varphi(t) \in C_c^{\infty}([0,T));$ as a result, the divergence
term integrates to zero and we obtain
\begin{align}
\label{dissipative_weakform:energy}
\begin{split}
-\int_{\mathbb{T}^3}\varphi(0)\left(\frac{1}{2}|V^h|^2+\hat{e}(\Xi^h,H^h)\right)(x,0)\:dx
&-\int_{Q_{T}}\left(\frac{1}{2}|V^h|^2+\hat{e}(\Xi^h,H^h)\right)\varphi'(t)\:dxdt\\
\leq&\int_{Q_{T}}\varphi(t)\:\hat{\theta}(\xi^h,\eta^h)\frac{r}{\hat{\theta}(\xi^h(t-h),\eta^h(t-h))}\:dxdt.
\end{split}
\end{align}

First, we pass to the limit in \eqref{weakform:dteta} and \eqref{weakform:dtv}
to retrieve equations \eqref{e:mv.sol.xi-v-eta}$_2$ and \eqref{e:mv.sol.xi-v-eta}$_3$.
We do this using the well-known Theorem of Young Measure representation in the $L^p$ 
setting given in \cite{MR1036070}. 
Starting with \eqref{weakform:dteta}, we need to make sure that the term
\begin{align}
\label{tem_r_and_theta}
\frac{r}{\hat{\theta}(\xi^h(t-h),\eta^h(t-h))}
\end{align}
is representable. The function $r=r(x,t)$ the (external) radiative heat supply is assumed in $L^{\infty}(Q_{T})$ 
and using the condition \eqref{growth.con.2} and the assumption that
$\hat{\theta}\geq\delta>0$ -- 
in order to avoid the denominator approaching zero -- we obtain
\begin{align*}
\frac{r}{\hat{\theta}(\xi^h(t-h),\eta^h(t-h))}
\rightharpoonup\left\la\bn,\frac{r}{\hat{\theta}(\Phi(\lambda_F),\li)}\right\ra,
\qquad\text{weakly in }L^1(Q_{T})
\end{align*}
and derive \eqref{e:mv.sol.xi-v-eta}$_2$. 

Moving to \eqref{weakform:dtv}, in the limit we expect
\begin{align*}
\int\left\la\bn,\lvi\right\ra(x,0)\phi(x,0)\:dx
+\int_{Q_{T}}&\left\la\bn,\lvi\right\ra\partial_t\phi\:dxdt\\
&=\int_{Q_{T}}\left\la\bn,\frac{\partial\hat{e}}{\partial\xi^B}(\Phi(\lambda_F),\lambda_{\eta})
\frac{\partial\Phi^B}{\partial F_{i\alpha}}(\lf)\right\ra
\partial_{\alpha}\phi\:dxdt.
\end{align*}	
In order to pass to the limit, we need to examine whether the term
\begin{align}
\label{e:mv.sol.v-TERM}
\frac{\partial\hat{e}}{\partial\xi^B}(\xi^h,\eta^h)\frac{\partial\Phi^B}{\partial F_{i\alpha}}(f^h(t-h))
\end{align}
is representable. Indeed, since the sequences $(\xi^h,\eta^h)$ and $f^h(t-h)$ obey the uniform bounds
	\eqref{h-sequenceses-boundness} the argument in  \eqref{bound_p_prime} together with 
	\eqref{growth.con.3}  imply the term in \eqref{e:mv.sol.v-TERM} is uniformly bounded in $L^\infty ( L^{p'})$.
Therefore in the limit $h\to 0$
\begin{align*}
\frac{\partial\hat{e}}{\partial\xi^B}(\xi^h,\eta^h)\frac{\partial\Phi^B}{\partial F_{i\alpha}}(f^h(t-h))
\rightharpoonup
\left\la\bn,\frac{\partial\hat{e}}{\partial\xi^B}(\Phi(\lf),\li)\frac{\partial\Phi^B}{\partial F_{i\alpha}}(\lf)\right\ra,
\quad\text{weakly in }L^1(Q_{T}) \, ,
\end{align*}
in turn leading to \eqref{e:mv.sol.xi-v-eta}$_3$. 

Finally we pass to the limit in \eqref{weakform:energy} to get a dissipative measure-valued solution:
\begin{align*}
-\int& \varphi(0)\left\la\bn,\frac{1}{2}|\lv|^2+\hat{e}(\Phi(\lambda_F),\li)\right\ra (x,0)\:dx \\ 
&-\int_0^T
\int  \varphi^{\prime}(t) \left(\left\la\bn,\frac{1}{2}|\lv|^2+\hat{e}(\Phi(\lambda_F),\li) \right\ra(x,t)  \, dx\:dt +\bg(dx\:dt) \right)  \\
&\leq\int_0^T\int \left\la\bn,r\right\ra\varphi(t)\:dx\:dt\,.
\end{align*}
Since we know that the functions $(V^h,\Xi^h,H^h)$
are all bounded in some $L^p$ space -because of \eqref{h-sequenceses-boundness} and \eqref{growth.con.1}- one
could apply the generalized Young measure Theorem \cite{dm87,ab97}, in order to pass to the limit, since the only 
assumption required is $L^1$ boundedness. The Theorem asserts that given a sequence of functions 
$\{u_n\},$ $u_n:\mathbb{T}^d\to\mathbb{R}^m,$ bounded in $L^p(\mathbb{T}^d),$ ($p\geq 1$) there exists a 
subsequence (which we will not relabel), a parametrized family of probability measures 
$\bn\in L^{\infty}_{weak}(\mathbb{T}^d;\mathcal{P}(\mathbb{R}^m)),$ a nonnegative measure 
$\boldsymbol{\mu}\in\mathcal{M}^+(\mathbb{T}^d)$ and a parametrized probability measure on a 
sphere $\bn^{\infty}\in L^{\infty}_{weak}((\mathbb{T}^d,\boldsymbol{\mu});\mathcal{P}(S^{m-1}))$ such that
\begin{align}
\label{ab97_weaklimits}
\psi(x,u_n)\:dx\rightharpoonup\int_{\mathbb{R}^m}\psi(x,\lambda)\:d\boldsymbol{\nu}dx
+ \int_{S^{m-1}}\psi^{\infty}(x,z)\:d\boldsymbol{\nu}^{\infty}dx
\quad\text{weakly-$\ast$,}
\end{align}
for all $\psi$ continuous with well-defined recession function 
$$\psi^{\infty}(x,z):=\lim_{\substack{s\to\infty\\z'\to z}}\frac{\psi(x,sz')}{s^p}.$$
Now since the sequences $(V^h,\Xi^h,H^h),$ $(v^h,\xi^h,\eta^h)$ and $(v^h(t-h),\xi^h(t-h),\eta^h(t-h))$ are bounded 
in different spaces and have different growth, we need to apply a refinement of the aforementioned Theorem as, for instance, in \cite{GSW2015}: 
Consider a sequence of maps $u_n=\{u^1_n,u^2_n\}$ where $\{u^1_n\}$ is bounded in some 
$L^p(\mathbb{T}^d;\mathbb{R}^b)$ and $\{u^2_n\}$ is bounded in $L^q(\mathbb{T}^d;\mathbb{R}^l).$ Define the 
non-homogeneous unit sphere 
$$
S^{b+l-1}_{pq}:=\{(\beta_1,\beta_2)\in\mathbb{R}^{b+l}:|\beta_1|^{2p}+|\beta_2|^{2q}=1\}\,.
$$
\tcr{
	Then one can pass to the limit as in \eqref{ab97_weaklimits} where 
	$$
	\begin{aligned}
	\psi^{\infty}(x,z) 
	&:=\lim_{\substack{x'\to x\\s\to\infty\\(\beta_1',\beta_2')\to (\beta_1,\beta_2)}}
	\frac{\psi(x',s^q\beta_1',s^p\beta_2')}{s^{pq}}
	= \lim_{\substack{x'\to x\\ \tau \to\infty\\(\beta_1',\beta_2')\to (\beta_1,\beta_2)}}
	\frac{\psi(x', \tau^{\frac{1}{p}} \beta_1', \tau^{\frac{1}{q}} \beta_2')}{\tau}.
	\end{aligned}
	$$
	When $p, q > 1$ one may replace  $S^{b+l-1}_{pq}$ by 
	${S'}^{b+l-1}_{pq} = \{(\beta_1,\beta_2)\in\mathbb{R}^{b+l}:|\beta_1|^{p}+|\beta_2|^{q}=1\}$.
}

	We apply the generalized Young measure theorem \cite{dm87,ab97} and the idea of generalized balls from \cite{GSW2015}
	in the present context of uniform energy bounds for  $(V^h,\Xi^h,H^h)$:
	\begin{align*}
	\int\left(\frac{1}{2}|V^h|^2+\hat{e}(\Xi^h,H^h)\right)\:dx \le C \, .
	\end{align*}
	The relevant $L^r$ norms are dictated by \eqref{growth.con.1} and have different exponents in
	different directions. We define the generalized sphere
	$$
	S^{22} = \{ (F, \zeta, w, \eta, v) \in \mathbb{R}^{23} : |F|^p + |\zeta|^q + |w|^\rho + |\eta|^\ell + |v|^2 = 1 \} \, .
	$$
	The form of the recession function for the energy follows from \cite[Thm 2.5]{ab97} via the procedure described in
	\cite[Sec 5.4.1]{christoforou2016relative}
	The recession function reads
	\begin{align*}
	\left(\frac{1}{2}|v|^2+\hat{e}(\xi,\eta)\right)^{\infty}=\lim_{\tau \to\infty}
	\left(\frac{1}{2}|v|^2+\frac{\hat{e}( \tau^{\frac{1}{p}} F ,  \tau^{\frac{1}{q}} \zeta , \tau^{\frac{1}{\rho}} w,  \tau^{\frac{1}{\ell}} \eta)}{\tau}\right),
	\end{align*}
	and we require it to be continuous on $S^{22}$.  Then, along a subsequence in $h$,
	\begin{align*}
	\frac{1}{2}|V^h|^2+\hat{e}(\Xi^h,H^h) \overset{\ast}{\rightharpoonup}
	\left\la\bn,\frac{1}{2}|\lv|^2+\hat{e}(\Phi(\lf),\li)\right\ra \, dx
	+ \left\la\bn^{\infty},\left(\frac{1}{2}|\lv|^2+\hat{e}(\Phi(\lf),\li)\right)^{\infty}\right\ra\boldsymbol{\mu}
	\end{align*}
	weak-$\ast$ in the sense of measures, where $\bn\in\mathcal{P}(Q_{T};\mathbb{R}^{23}),$ 
	$\bn^{\infty}\in\mathcal{P}((Q_{T},\boldsymbol{\mu});S^{22})$ and $\boldsymbol{\mu}\in\mathcal{M}^+(Q_T).$ 
	Note that
	\eqref{growth.con.1} implies $\ds\left(\frac{1}{2}|\lv|^2+\hat{e}(\Phi(\lf),\li)\right)^{\infty}>0$
	so that the concentration measure $\gamma \in \mathcal{M}^+(Q_T)$ is nonnegative,
	\begin{align}
	\label{concmeas}
	\bg:=\left\la\bn^{\infty},\left(\frac{1}{2}|\lv|^2+\hat{e}(\Phi(\lf),\li)\right)^{\infty}\right\ra\boldsymbol{\mu} \ge 0 \, .
	\end{align}

In summary, we have constructed a measure-valued solution that satisfies in the sense of distributions
\begin{align*}
\partial_t \Phi^B(F)&=\partial_{\alpha}\left(\phv \right) \\ 
\partial_t \eta &= \Big\la\bn,\frac{r}{\hat{\theta}(\Phi(\lf),\li)}\Big\ra\\
\partial_t v_i
&=\partial_{\alpha}\left\la\bn,\frac{\partial \hat{e}}{\partial\xi^B}(\Phi(\lf),\lambda_{\eta})\frac{\partial\Phi^B}{\partial F_{i\alpha}}(\lf)\right\ra
\end{align*}
and for  $\varphi(t) \in C^{\infty}_c([0,T))$, $\varphi(t)\ge 0$,
\begin{align*}
-\int& \varphi(0)\left(\Big\la\bn,\frac{1}{2}|\lv|^2+\hat{e}(\Phi(\lf),\li)\Big\ra (x,0)\:dx +\bg_0(dx) \right)\\
&-\int_0^T
\int  \varphi^{\prime}(t) \left(\Big\la\bn,\frac{1}{2}|\lv|^2+\hat{e}(\Phi(\lf),\li) \Big\ra(x,t)  \, dx\:dt +\bg(dx\:dt) \right)  \\
&\leq\int_0^T\int  r(x,t) \varphi(t)\:dx\:dt \, .
\end{align*}

	\begin{remark}\rm
		The representation theory of \cite{ab97} gives a precise form of the concentration measure \eqref{concmeas}
		but requires continuity of the recession function. Instead, one can use a hands-on construction,
		carried out in \cite[App]{MR1831175}, that provides less information on the concentration measure but
		requires only that $\hat e(\xi, \eta)$ is strictly convex and positive.
		This can be applied in the present context, since all the relevant functions can be represented by classical Young measures
		and the only function that may have concentrations is the energy and yields a positive concentration measure.	
	\end{remark}

\section{Convergence of the scheme in the smooth regime}\label{3:SEC6}
In this Section we compare the measure-valued solutions constructed in the previous Section and satisfying equations 
\eqref{e:mv.sol.xi-v-eta},\eqref{e:mv.sol.Fvt.energy} against a
strong solution for polyconvex thermoelasticity via the relative entropy method. The goal is to show that the solutions constructed
via the variational scheme converge to the solution of \eqref{e:adiabatic_thermoelasticity} so long as the latter is smooth.
Consider the Lipschitz solution $(\Phi(\bar{F}),\bar{v},\bar{\eta})$ defined on $[0,T] \times \mathbb{T}^3$ and solving 
\begin{align}
\label{e:strong_system}
\begin{split}
\partial_t \Phi^B(\bar{F})&=\partial_{\alpha}\left(\phbv \right)\\ 
\partial_t\bar{\eta} &=\frac{r}{\hat{\theta}(\Phi(\bar{F}),\bar{\eta})}\\
\partial_t \bar{v}_i&=\partial_{\alpha}\left(\frac{\partial \hat{e}}{\partial\xi^B}(\Phi(\bar{F}),\bar{\eta})\phb\right)\\
\partial_t \left(\frac{1}{2}|\bar{v}|^2+\hat{e}(\Phi(\bar{F}),\bar{\eta})\right)
&=\partial_{\alpha}\left(\frac{\partial\hat{e}}{\partial\xi^B}(\Phi(F),\bar{\eta})\phb \bar{v}_i\right)+r .
\end{split}
\end{align}
We assume the initial data have no concentration $\bg_0=0.$
Next we write the weak form of the difference between \eqref{e:mv.sol.xi-v-eta}$_{1,3}$,\eqref{e:mv.sol.Fvt.energy} and \eqref{e:strong_system}$_{1,3,4}$,
tested against the functions 
\begin{align*}
-\hat\theta(\Phi(\bar{F}),\bar{\eta})G(\bar U)\varphi(t) = \left(-\frac{\partial \hat{e}}{\partial\xi^B}(\Phi(\bar{F}),\bar{\eta})\, , -\bar{v} ,1 \right)\varphi(t) 
\end{align*}
where $0<\varphi(t)\in C^{\infty}_c([0,T))$ namely
\begin{align}
\begin{split}
\label{e:eq1phi.wk.mv}
\int &\left(-\frac{\partial\hat{e}}{\partial\xi^B}(\Phi(\bar{F}),\bar{\theta})(\Phi^B(F)-\Phi^B(\bar{F}))\right)(x,0)\varphi(0)\:dx \\
&+\int_0^T\int\left(-\frac{\partial\hat{e}}{\partial\xi^B}(\Phi(\bar{F}),\bar{\theta})(\Phi^B(F)-\Phi^B(\bar{F}))\right)\varphi^{\prime}(t)\:dx\:dt\\
&=\int_0^T\int \left[\partial_t\Big(\frac{\partial\hat{e}}{\partial\xi^B}(\Phi(\bar{F}),\bar{\theta})\Big)(\Phi^B(F)-\Phi^B(\bar{F}))\right.\\
&\qquad\qquad\left.-\partial_{\alpha}\Big(\frac{\partial\hat{e}}{\partial\xi^B}(\Phi(\bar{F}),\bar{\theta})\Big)\Big(\phv-\phbv\Big)\right]\varphi(t) dxdt \, ,\end{split}
\end{align}
\begin{equation}
\begin{split}
\label{e:eq2phi.wk.mv}
\int(&-\bar{v}_i(\left\la\bn,\lvi\right\ra-\bar{v}_i))(x,0)\varphi(0)\:dx 
+\int_0^T\int-\bar{v}_i(\left\la\bn,\lvi\right\ra-\bar{v}_i)\varphi^{\prime}(t)\:dx\:dt\\
&=-\int_0^T\int\left[-\partial_{\alpha}\left(\frac{\partial\hat{e}}{\partial\xi^B}(\Phi(\bar{F}),\bar{\eta})\phb\right)(\left\la\bn,\lvi\right\ra-\bar{v}_i)\right.\\
&\:\:\hspace{2cm}+\partial_{\alpha}\bar{v}_i\left(\left\la\bn,\frac{\partial\hat{e}}{\partial\xi^B}(\Phi(\lambda_{F}),\lambda_{\eta})\phl\right\ra\right.\\
&\:\:\hspace{4cm}\left.\left.-\frac{\partial\hat{e}}{\partial\xi^B}(\Phi(\bar{F}),\bar{\eta})\phb\right)\right]\varphi(t)\:dx\:dt\;,
\end{split}
\end{equation}
and 
\begin{align}
\label{e:eq3phi.wk.mv}
\int&\left(\left\la\bn,\frac{1}{2}|\lv|^2+\hat{e}(\Phi(\lf),\li)\right\ra
-\frac{1}{2}|\bar{v}|^2-\hat{e}(\Phi(\bar{F}),\bar{\theta})\right)\!(x,0)\;\varphi(0)\:dx\nonumber \\
&
+\int_0^T \int \Big\{\left(\left\la\bn,\frac{1}{2}|\lv|^2+\hat{e}(\Phi(\lf),\li)\right\ra
-\frac{1}{2}|\bar{v}|^2-\hat{e}(\Phi(\bar{F}),\bar{\theta})\right)
+\bg\Big\}\varphi^{\prime}(t)\:dx\:dt\nonumber \\
&\geq-\int_0^T \int (\left\la\bn,r\right\ra-\bar{r})\varphi(t)\:dx\:dt,
\end{align}
while testing the difference between \eqref{e:mv.sol.xi-v-eta}$_{2}$ and \eqref{e:strong_system}$_{2}$ against $\theta(\Phi(\bar{F}),\bar{\eta})\varphi(t)$ we get
\begin{align}
\begin{split}
\label{e:entr2.wk.mv}
-\int\theta&(\Phi(\bar{F}),\bar{\eta})(\left\la\bn,\li\right\ra-\hat{\eta})(x,0)\varphi(0)\:dx\\
&-\int_0^T\int\theta(\Phi(\bar{F}),\bar{\eta})(\left\la\bn,\li\right\ra-\hat{\eta})\varphi^{\prime}(t)\:dx\:dt\\
&=\int_0^T\int \Bigg[\partial_t\theta(\Phi(\bar{F}),\bar{\eta})(\left\la\bn,\li\right\ra-\hat{\eta})\\
&\qquad\quad\qquad
+\theta(\Phi(\bar{F}),\bar{\theta})\left(\left\la\bn,\frac{r}{\bar{\theta}(\Phi(\lf),\li)}\right\ra-\frac{r}{\bar{\theta}(\Phi(\bar{F}),\bar{\eta})}\right)\Bigg]\varphi(t)\:dx\:dt.
\end{split}
\end{align}
The formulation of the relative entropy inequality follows along the lines of the derivation in \cite[Section 4]{CGT2018}. 
The formal calculations are parallel, so here we omit the details. Having in mind \eqref{relative_I(U)}, we define
\begin{align*}
\left\la\bn,I(\lambda_{U}|\bar{U})\right\ra&=\left\la\bn,I(\lv,\Phi(\lambda_{F}),\li|\bar{v},\Phi(\bar{F}),\bar{\eta})\right\ra\\
&:=\left\la\bn,\frac{1}{2}|\lv-\bar{v}|^2+\hat{e}(\Phi(\lf),\li|\Phi(\bar{F}),\bar{\eta})\right\ra
\end{align*} 
where
\begin{align*}
\hat{e}&(\Phi(\lf),\li|\Phi(\bar{F}),\bar{\eta})\\
&:=\hat{e}(\Phi(\lf),\li)-\hat{e}(\Phi(\bar{F}),\bar{\eta})-\frac{\partial\hat{e}}{\partial\xi^B}(\Phi(\bar{F}),\bar\eta)(\Phi^B(F)-\Phi^B(\bar{F}))
-\frac{\partial\hat{e}}{\partial\eta}(\Phi(\bar{F}),\bar\eta)(\li-\bar\eta),
\end{align*}
and we also define the following relative quantities:
\begin{align*}
&\left \la\bn,\hat{\theta}(\Phi(\lambda_{F}),\li|\Phi(\bar{F}),\bar{\eta})\right\ra
:=\Big \la\bn,\hat{\theta}(\Phi(\lambda_{F}),\li)-\hat{\theta}(\Phi(\bar{F}),\bar{\eta}) 
\\
&\qquad 
-\frac{\partial\hat{\theta}}{\partial\xi^B}(\Phi(\bar{F}),\bar\eta)(\Phi^B(F)-\Phi^B(\bar{F}))
-\frac{\partial\hat{\theta}}{\partial\eta}(\Phi(\bar{F}),\bar\eta)(\li-\bar\eta)\Big \ra
\end{align*} 
and
\begin{align*}
\left\la\bn,\frac{\partial\hat{e}}{\partial\xi^B}(\Phi(\lambda_{F}),\li|\Phi(\bar{F}),\bar{\eta})\right\ra
:=\left\la\bn,\frac{\partial\hat{e}}{\partial\xi^B}(\Phi(\lambda_{F})\li)-\frac{\partial\hat{e}}{\partial\xi^B}(\Phi(\bar{F}),\bar{\eta})\right.\\
\left.-\frac{\partial^2\hat{e}}{\partial\xi^B\partial\xi^A}(\Phi(\bar{F}),\bar\eta)(\Phi^B(F)-\Phi^B(\bar{F}))
-\frac{\partial^2\hat{e}}{\partial\xi^B\partial\eta}(\Phi(\bar{F}),\bar\eta)(\li-\bar\eta)\right\ra.
\end{align*} 
Then because of \eqref{theta-Phi} and employing the null-Lagrangian property \eqref{Euler-Lagrange} in conjunction with the findings of Lemma
\ref{Lemma_null_lagrangian_WEAK}, we can add together \eqref{e:eq1phi.wk.mv},\eqref{e:eq2phi.wk.mv},\eqref{e:eq3phi.wk.mv} and \eqref{e:entr2.wk.mv}
and derive the relative entropy inequality:
	\begin{align}
	\label{e:rel.en.id.mv}
	\begin{split}
	&\int\varphi(0)[\left\la\bn,I(\lambda_{U_0}|\bar{U_0})\right\ra\:dx]
	+\int_{0}^{T}\int\varphi'(t)\left[\left\la\bn,I(\lambda_{U}|\bar{U})\right\ra\:dx\:dt+\bg(dx\,dt)\right]\\
	&\geq-\int_{0}^{T}\int\varphi(t)\Bigg[-\partial_t\bar{\eta}\left\la\bn,\hat{\theta}(\Phi(\lf),\li|\Phi(\bar{F}),\bar{\eta})\right\ra\\
	&+\partial_t\Phi^B(\bar{F})\left\la\bn,\frac{\partial\hat{e}}{\partial \xi^B}(\Phi(\lf),\li|\Phi(\bar{F}),\bar{\eta})\right\ra\\
	&+\partial_{\alpha}\bar{v}_i\left\la\bn,\left(\frac{\partial\hat{e}}{\partial\xi^B}(\Phi(\lambda_{F}),\lambda_{\eta})
	-\frac{\partial\hat{e}}{\partial\xi^B}(\Phi(\bar{F}),\bar{\eta})\right)\left(\phl-\phb\right)\right\ra\\
	&+\partial_{\alpha}\left(\frac{\partial\hat{}}{\partial\xi^B}(\Phi(\bar{F}),\bar{\eta})\right)\!\left(\ph-\phb\right)\!\la\bn,(\lvi-\bar{v}_i)\ra\\
	&+\left\la\bn,\left(\frac{r}{\hat{\theta}(\Phi(\lambda_{F}),\li)}-\frac{r}{\hat{\theta}(\Phi(\bar{F}),\bar{\eta})}\right)
	(\hat{\theta}(\Phi(\lambda_{F}),\li)-\hat{\theta}(\Phi(\bar{F}),\bar{\eta}))\right\ra\Bigg]\:dx\,dt.
	\end{split}
	\end{align}
Now having \eqref{e:rel.en.id.mv}, we can show that classical solutions are unique in the class of dissipative measure-valued solutions 
generated as limits of the discrete scheme. To prove this, we assert first that the following bounds on the relative entropy and the terms on the 
right hand side of \eqref{e:rel.en.id.mv}, can be obtained given the growth conditions \eqref{growth.con.1},\eqref{growth.con.2},
\eqref{growth.con.3} and due to the convexity of $\hat e,$ in the same manner as in \cite[Section 5]{CGT2018}:
\begin{lemma}
	Given the growth conditions \eqref{growth.con.1}-\eqref{growth.con.3},
	and if $\hat{e}\in C^3(\mathbb{R}^{19}\times[0,\infty)),$ and the smooth solution $(\bar{F},\bar{v},\bar{\eta})$
	lies in the compact set
	\begin{equation*}
	\Gamma_{M}:=\left\{(\bar{F},\bar{v},\bar{\eta}): |\bar{F}|\leq M, |\bar{v}|\leq M, |\bar{\eta}|\leq M\right\}
	\end{equation*}
	for a positive constant $M$ then:
	\begin{enumerate}
		\item[(I)]
		There exist $R=R(M)$ and constants $K_1=K_1(M,c)>0,\:K_2=K_2(M,c)>0$ such that
		\begin{small}
			\begin{align}
			\label{bound4}
			\begin{split}
			I(v,\Phi(F),\eta|\bar{v},\Phi(\bar{F}),\bar{\eta}) \geq
			\begin{dcases}
			\frac{K_1}{2}(|F|^p+\eta^{\ell}+|v|^2),& \text{if } |F|^p\!+\eta^{\ell}\!+|v|^2>R\\
			K_2(|\Phi(F)-\Phi(\bar{F})|^2 &\text{if } |F|^p\!+\eta^{\ell}\!+|v|^2\leq R\\
			\quad+|\eta-\bar{\eta}|^2\!+\!|v-\bar{v}|^2), & 
			\end{dcases}
			\end{split}
			\end{align}
		\end{small}
		for all $(\bar{F},\bar{v},\bar{\eta})\in\Gamma_{M}$.
		\item[(II)]
		There exist constants $C_1,C_2,C_3,C_4>0$ such that
		\begin{equation}
		\label{bound6}
		\begin{split}
		\Bigg|\left(\frac{\partial\Phi^B}{\partial F_{i\alpha}}(F)
		-\frac{\partial\Phi^B}{\partial F_{i\alpha}}(\bar{F})\right)
		&\left(\frac{\partial\hat{\psi}}{\partial\xi^B}(\Phi(F),\eta)
		-\frac{\partial\hat{\psi}}{\partial\xi^B}(\Phi(\bar{F}),\bar{\eta})\right)\Bigg|\leq\\
		&\leq C_1 I(v,\Phi(F),\eta|\bar{v},\Phi(\bar{F}),\bar{\eta}) \;,
		\end{split}
		\end{equation}
		\begin{equation}
		\label{bound7}
		\left|\frac{\partial\hat{e}}{\partial\xi}(\Phi(F),\eta|\Phi(\bar{F}),\bar{\eta})\right|\leq C_2 I(v,\Phi(F),\eta|\bar{v},\Phi(\bar{F}),\bar{\eta})\;,
		\end{equation}
		\begin{equation}
		\label{bound5}
		|\hat{\theta}(\Phi(F),\eta|\Phi(\bar{F}),\bar{\eta})|\leq C_3 I(v,\Phi(F),\eta|\bar{v},\Phi(\bar{F}),\bar{\eta})\;,
		\end{equation}
		and 
		\begin{equation}
		\label{bound8}
		\left|\left(\frac{\partial\Phi^B}{\partial F_{i\alpha}}(F)
		-\frac{\partial\Phi^B}{\partial F_{i\alpha}}(\bar{F})\right)(v_i-\bar{v}_i)\right|
		\leq C_4 I(v,\Phi(F),\eta|\bar{v},\Phi(\bar{F}),\bar{\eta})
		\end{equation}
		for all $(\bar{F},\bar{v},\bar{\eta})\in\Gamma_{M}$.
		
		\item[(III)] Under hypothesis \eqref{lowerb}, for some constant $C_5$, we have
		\begin{equation}
		\label{bound101}
		\Big | \Big(\frac{1}{\hat{\theta}(\Phi(F), \eta )}-\frac{1}{\hat{\theta}(\Phi(\bar{F}),\bar{\eta})}\Big)
		(\hat{\theta}(\Phi(F), \eta)-\hat{\theta}(\Phi(\bar{F}),\bar{\eta})) \Big | \le C_5 I(v,\Phi(F),\eta|\bar{v},\Phi(\bar{F}),\bar{\eta})
		\end{equation}
		for $(\bar{F},\bar{v},\bar{\eta})\in\Gamma_{M}$.
		
		\item[(IV)]
		There exist constants $K_1^{\prime},\:K_2^{\prime}$ and $R>0$ sufficiently large such that
			\begin{align}
			\label{bound9}
			\begin{split}
			I(v,\Phi(F),\eta|\bar{v},\Phi(\bar{F}),\bar{\eta}) \geq
			\begin{dcases}
			\frac{K_1^{\prime}}{4}(|F-\bar{F}|^p      & \text{if } |F|^p\!+\eta^{\ell}\!+|v|^2>R\\
			\quad+|\eta-\bar{\eta}|^{\ell}+|v-\bar{v}|^2),  &\\
			K_2^{\prime}(|\Phi(F)-\Phi(\bar{F})|^2 & \text{if } |F|^p\!+\eta^{\ell}\!+|v|^2\leq R\\
			\quad+\!|\eta-\bar{\eta}|^2\!+\!|v-\bar{v}|^2),   &\\
			\end{dcases}
			\end{split}
			\end{align}
		for all $(\bar{F},\bar{v},\bar{\eta})\in\Gamma_{M}$.

	\end{enumerate}
\end{lemma}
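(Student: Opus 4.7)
The strategy throughout is to split into a bounded regime $\{|F|^p+\eta^\ell+|v|^2\le R\}$ and a high-energy regime $\{|F|^p+\eta^\ell+|v|^2> R\}$, handling the first by Taylor expansion (using $\hat e\in C^3$ and strict convexity) and the second by the coercivity encoded in \eqref{growth.con.1}. For Part (I), in the bounded regime the quantity $\hat e(\Phi(F),\eta|\Phi(\bar F),\bar\eta)$ is the remainder of a second-order Taylor expansion of the $C^3$ strictly convex map $(\xi,\eta)\mapsto\hat e(\xi,\eta)$ around $(\Phi(\bar F),\bar\eta)$, and uniform strict convexity on the compact set where both arguments lie yields a lower bound of the form $K_2(|\Phi(F)-\Phi(\bar F)|^2+|\eta-\bar\eta|^2)$; combined with $\tfrac12|v-\bar v|^2$ from the kinetic term this gives the second branch of \eqref{bound4}. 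In the high-energy regime one chooses $R=R(M)$ so large that the linear terms in the Taylor expansion, which are controlled on $\Gamma_M$, are dominated by half of the coercive lower bound $c(|F|^p+|\zeta|^q+|w|^\rho+|\eta|^\ell)$ from \eqref{growth.con.1}; since $|\Phi(F)|\gtrsim |F|^p+|\mathrm{cof}\,F|^q+|\det F|^\rho$ after reindexing, this yields the first branch.

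For Part (II), bounds \eqref{bound7} and \eqref{bound5} follow by the same Taylor/coercivity dichotomy: $\frac{\partial\hat e}{\partial\xi}(\Phi(F),\eta\,|\,\Phi(\bar F),\bar\eta)$ and $\hat\theta(\Phi(F),\eta\,|\,\Phi(\bar F),\bar\eta)$ are first-order Taylor remainders in $(\xi,\eta)$ that are pointwise quadratic when $(F,\eta,v)$ stays bounded, and that grow no faster than the energy itself at infinity by \eqref{growth.con.3} combined with \eqref{growth.con.1}; hence both are absorbed into $I(\cdot|\bar\cdot)$. Bound \eqref{bound6} exploits the bilinear structure of $\mathrm{cof}$ and the trilinear structure of $\det$, so that $\frac{\partial\Phi^B}{\partial F_{i\alpha}}(F)-\frac{\partial\Phi^B}{\partial F_{i\alpha}}(\bar F)$ is at worst quadratic in $F,\bar F$; in the bounded regime this yields an $|F-\bar F|^2$ factor that pairs with the $|\partial\hat e/\partial\xi|$ difference to give a quadratic quantity controlled by $I(\cdot|\bar\cdot)$, while in the unbounded regime one uses Young's inequality together with \eqref{growth.con.3} exactly as in the derivation of \eqref{bound_p_prime} to absorb into the energy. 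Bound \eqref{bound8} is proved identically, pairing the $\Phi^B$-difference against $(v-\bar v)$ via Young's inequality.

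Part (III) is the easiest: the hypothesis $\hat\theta\ge\delta>0$ bounds $1/\hat\theta$ from above by $1/\delta$, so the factor $\bigl(\tfrac{1}{\hat\theta(\Phi(F),\eta)}-\tfrac{1}{\hat\theta(\Phi(\bar F),\bar\eta)}\bigr)(\hat\theta(\Phi(F),\eta)-\hat\theta(\Phi(\bar F),\bar\eta))$ equals $\bigl(\hat\theta-\bar{\hat\theta}\bigr)^2/(\hat\theta\,\bar{\hat\theta})$, nonnegative and bounded above by $\delta^{-2}(\hat\theta-\bar{\hat\theta})^2$. This last quadratic is controlled by $I(\cdot|\bar\cdot)$ by a Taylor expansion of $\hat\theta$ in the bounded regime and by the quadratic growth built into \eqref{growth.con.2} in the high-energy regime. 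For Part (IV), one refines the first branch of \eqref{bound4}: instead of just using $|F|^p+\eta^\ell+|v|^2$, one chooses $R$ even larger so that $|F|^p\ge 2^p|\bar F|^p$ on the high-energy set and hence $|F-\bar F|^p\ge 2^{-p}|F|^p$; one similarly handles $\eta^\ell$ and $v^2$, yielding the first branch of \eqref{bound9}, while the bounded regime is unchanged.

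The main obstacle will be verifying that the quadratic upper bounds in Part (II), particularly \eqref{bound6}, hold \emph{uniformly} across the full range of $(F,\eta,v)$ without loss of any power $p,q,\rho,\ell$: the term inside the absolute value on the left of \eqref{bound6} is the product of two quantities that individually may be very large, and only their \emph{joint} Taylor-type smallness (in the sense of the relative energy) rescues the estimate. The careful bookkeeping of Young exponents $p/(p-1)$, $p/(p-2)$, $p/(p-3)$ performed around \eqref{bound_p_prime}, together with $p\ge 4$, is exactly what makes this work, so the argument of Theorem \ref{minimization}, Step 3, is reused here essentially verbatim.
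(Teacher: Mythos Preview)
Your treatment of Parts (I), (II) and (IV) follows the standard bounded/unbounded dichotomy and agrees with the approach the paper defers to \cite{CGT2018}. The gap is in Part (III).

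You correctly rewrite the quantity as $|J|=(\hat\theta-\bar{\hat\theta})^2/(\hat\theta\,\bar{\hat\theta})$, but then you discard \emph{both} factors in the denominator via the lower bound $\hat\theta\ge\delta$, obtaining $|J|\le\delta^{-2}(\hat\theta-\bar{\hat\theta})^2$. In the high-energy regime this bound is too crude to close: condition \eqref{growth.con.2} is a \emph{sublinear} growth condition, $\hat\theta(\xi,\eta)=o(|\xi|_{p,q,\rho}+|\eta|^\ell)$, not a square-root bound. Hence $(\hat\theta-\bar{\hat\theta})^2$ can grow like $o\bigl((|\xi|_{p,q,\rho}+|\eta|^\ell)^2\bigr)$, whereas $I(\cdot|\bar\cdot)$ only grows like $|\xi|_{p,q,\rho}+|\eta|^\ell$ by \eqref{bound4}. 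The ratio is therefore unbounded and the inequality $(\hat\theta-\bar{\hat\theta})^2\le C\,I$ fails in general. Your phrase ``the quadratic growth built into \eqref{growth.con.2}'' misreads that hypothesis.

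The paper's fix is to keep one factor of $\hat\theta$ in the denominator. Writing
\[
|J|=\frac{\hat\theta(\Phi(\bar F),\bar\eta)}{\hat\theta(\Phi(F),\eta)}+\frac{\hat\theta(\Phi(F),\eta)}{\hat\theta(\Phi(\bar F),\bar\eta)}-2,
\]
the first term is bounded by a constant (numerator bounded on $\Gamma_M$, denominator $\ge\delta$), while the second is $\hat\theta(\Phi(F),\eta)/\bar{\hat\theta}\le\delta^{-1}\hat\theta(\Phi(F),\eta)=o(|\xi|_{p,q,\rho}+|\eta|^\ell)$ by \eqref{growth.con.2}, hence $o(I)$ by \eqref{bound4}. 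This gives $|J|\le C_5\,I$ on the high-energy set, and your quadratic Taylor argument handles the bounded set unchanged.
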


\begin{proof}
	The proofs  of (I), (II) and (IV) follow along the lines of \cite[Lemmas 5.1 and 5.2]{CGT2018}
	and  the details are omitted here. To prove (III), set
	$$
	J := \Big(\frac{1}{\hat{\theta}(\Phi(\lambda_{F}),\li)}-\frac{1}{\hat{\theta}(\Phi(\bar{F}),\bar{\eta})}\Big)
	(\hat{\theta}(\Phi(\lambda_{F}),\li)-\hat{\theta}(\Phi(\bar{F}),\bar{\eta}))
	$$
	and note that in the region $|F|^p\!+\eta^{\ell}\!+|v|^2\leq R$ we have 
	$$
	|J| \le C (|\Phi(F)-\Phi(\bar{F})|^2  +\!|\eta-\bar{\eta}|^2\!+\!|v-\bar{v}|^2) \, .
	$$
	On the complementary region $|F|^p\!+\eta^{\ell}\!+|v|^2 \ge R$,
	$$
	J = 2 - \frac{ \hat{\theta}(\Phi(\bar{F}),\bar{\eta}) }{\hat{\theta}(\Phi(F), \eta )}-\frac{\hat{\theta}(\Phi(F), \eta)}{\hat{\theta}(\Phi(\bar{F}),\bar{\eta})}
	$$
	and \eqref{growth.con.2}, \eqref{lowerb}, \eqref{bound4} imply \eqref{bound101}.
\end{proof}

\begin{theorem}
	\label{e:Uniqueness_thm.mv}
	Let $\bar{U} \in W^{1,\infty}(Q_T)$ be a Lipschitz solution of \eqref{e:strong_system} taking values in $\Gamma_{M}$ with initial data $\bar{U}^0$.
	Let $(\bn,\bg,U)$ be a dissipative measure-valued solution  satisfying \eqref{e:mv.sol.xi-v-eta},\eqref{e:mv.sol.Fvt.energy} for $r(x,t) \in L^\infty$
	under the constitutive assumptions \eqref{sigma-Phi},\eqref{theta-Phi} and with initial data 
	$\bn_{(0,x)} = \delta_{U_0(x)}$, for some $U^0$, with no concentration $\bg_0=0$. 
	\tcb{
		Suppose that
		$\nabla^2_{(\xi,\eta)}\hat e \ge c \, \mathbb{I} > 0$ and the growth conditions \eqref{growth.con.1},\eqref{growth.con.2}, \eqref{growth.con.3}
		hold for exponents $p\geq4,$ $q \ge 2$, $\rho > 1$ and  $\ell>1$. 
	}
	If  $U_0 (x) = \bar U_0 (x)$  then $\bn=\delta_{\bar{U}}$ and $U=\bar U$ a.e. on $Q_T.$
\end{theorem}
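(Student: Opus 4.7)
The proof will rest entirely on the relative entropy inequality \eqref{e:rel.en.id.mv} combined with the quantitative estimates (I)--(IV) from the preceding lemma. The plan is: (i) bound every term on the right-hand side of \eqref{e:rel.en.id.mv} by a multiple of $\langle \bn, I(\lambda_U|\bar U)\rangle$; (ii) specialize $\varphi(t)$ to a Lipschitz approximation of the indicator $\mathbf{1}_{[0,t_0]}$ so that the time-derivative term produces boundary values; (iii) apply Gr\"onwall to conclude that both the relative entropy and the concentration measure vanish; (iv) invoke the positivity estimate \eqref{bound9} to conclude $\bn=\delta_{\bar U}$.

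More concretely, because $\bar U\in W^{1,\infty}(Q_T)$ takes values in the compact set $\Gamma_M$, the coefficients $\partial_t\bar\eta$, $\partial_t\Phi^B(\bar F)$, $\partial_\alpha\bar v_i$, and $\partial_\alpha(\partial\hat e/\partial\xi^B(\Phi(\bar F),\bar\eta))$ appearing on the right-hand side of \eqref{e:rel.en.id.mv} are all bounded. Estimates \eqref{bound5} and \eqref{bound7} directly control the first two terms inside the braces by $C\langle\bn,I(\lambda_U|\bar U)\rangle$; estimate \eqref{bound6} controls the third (quadratic in the differences of $\partial\hat e/\partial\xi^B$ and $\partial\Phi^B/\partial F$); estimate \eqref{bound8} together with Young's inequality absorbs the fourth term (linear in $v-\bar v$ multiplied by a linear factor in $F-\bar F$) into $C\langle\bn,I(\lambda_U|\bar U)\rangle$; finally \eqref{bound101} takes care of the radiative term. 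The outcome is a pointwise bound
\begin{equation*}
|(\text{RHS integrand of }\eqref{e:rel.en.id.mv})| \le C(M,\|\bar U\|_{W^{1,\infty}},\|r\|_{L^\infty})\,\langle\bn,I(\lambda_U|\bar U)\rangle.
\end{equation*}

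With this bound in hand, fix $t_0\in(0,T)$ and choose a sequence $\varphi_\varepsilon\in C^\infty_c([0,T))$ approximating the indicator of $[0,t_0]$ (smooth, $\varphi_\varepsilon(0)=1$, affine descent from $1$ to $0$ on $[t_0,t_0+\varepsilon]$). Since by hypothesis $\bn_{(0,x)}=\delta_{\bar U_0(x)}$ and $\bg_0=0$, the initial relative-entropy term $\int\varphi_\varepsilon(0)\langle\bn,I(\lambda_{U_0}|\bar U_0)\rangle\,dx$ vanishes. Letting $\varepsilon\to 0$, and using that the nonnegative measure $\bg$ satisfies $\bg(\{t\}\times\mathbb T^3)=0$ for a.e.\ $t$ (or working with any Lebesgue point of $t\mapsto \int\langle\bn,I\rangle\,dx + \bg([0,t]\times\mathbb T^3)$), inequality \eqref{e:rel.en.id.mv} reduces to
\begin{equation*}
\int_{\mathbb T^3}\langle\bn,I(\lambda_U|\bar U)\rangle(x,t_0)\,dx + \bg\bigl([0,t_0]\times\mathbb T^3\bigr) \le C\int_0^{t_0}\!\!\int_{\mathbb T^3}\langle\bn,I(\lambda_U|\bar U)\rangle\,dx\,dt.
\end{equation*}
Gr\"onwall's inequality then forces $\int\langle\bn,I(\lambda_U|\bar U)\rangle(x,t)\,dx=0$ and $\bg\equiv 0$ on $[0,T]\times\mathbb T^3$. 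Finally, because $I(v,\Phi(F),\eta\,|\,\bar v,\Phi(\bar F),\bar\eta)\ge 0$ with equality only when $(v,F,\eta)=(\bar v,\bar F,\bar\eta)$---quantified by \eqref{bound9}, which bounds $I$ below by $|F-\bar F|^p+|\eta-\bar\eta|^\ell+|v-\bar v|^2$ in the unbounded regime and by the corresponding quadratic expression in the bounded regime---the vanishing of the averaged relative entropy implies that $\bn_{(x,t)}$ is supported on the singleton $\{\bar U(x,t)\}$ for a.e.\ $(x,t)$, hence $\bn=\delta_{\bar U}$ and $U=\langle\bn,\lambda_U\rangle=\bar U$ almost everywhere.

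The main technical obstacle is the careful handling of the concentration measure $\bg$: it enters the relative entropy inequality only through the $\varphi'(t)$ term, so one must take the limit $\varepsilon\to 0$ in $\varphi_\varepsilon$ along a subsequence of $t_0$'s that are Lebesgue points of $t\mapsto\bg([0,t]\times\mathbb T^3)$, and verify that the limit inequality holds for a.e.\ $t_0\in(0,T)$. This, combined with checking that all the integrands on the right-hand side of \eqref{e:rel.en.id.mv} are integrable against the Young measure under only the growth hypotheses \eqref{growth.con.1}--\eqref{growth.con.3} (which is where the exponent restrictions $p\ge 4$, $q\ge 2$, $\rho,\ell>1$ are used, via the estimates (II)--(IV)), constitutes the genuine work; the remaining algebra is routine given the derivation of \eqref{e:rel.en.id.mv}.
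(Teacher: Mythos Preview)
Your proposal is correct and follows essentially the same route as the paper's proof: both bound the right-hand side of \eqref{e:rel.en.id.mv} by $C\langle\bn,I(\lambda_U|\bar U)\rangle$ via estimates \eqref{bound5}--\eqref{bound8} and \eqref{bound101}, specialize the test function to a monotone approximation of $\mathbf{1}_{[0,t_0]}$, apply Gr\"onwall, and conclude via \eqref{bound9}. The only difference is cosmetic: the paper simply discards the concentration term using $\varphi_n'\le 0$ and $\bg\ge 0$, whereas you retain $\bg([0,t_0]\times\mathbb T^3)$ on the left-hand side and deduce $\bg\equiv 0$ as an additional consequence; this is a slight refinement but not a different strategy.
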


\begin{proof} 
	Let $\{\varphi_n\}$ be a sequence of $C^1$ monotone decreasing functions such that $\varphi_n\geq 0,$ for all $n\in\mathbb{N},$ converging as $n \to \infty$ to the 
	Lipschitz function
	\begin{align*}
	\varphi(\tau)=\begin{dcases}
	1 & 0\leq\tau\leq t\\
	\frac{t-\tau}{\varepsilon}+1 & t\leq\tau\leq t+\varepsilon\\
	0 & \tau\geq t+\varepsilon
	\end{dcases}
	\end{align*}
	for some $\varepsilon>0.$ Testing the relative entropy inequality~\eqref{e:rel.en.id.mv} against the functions $\varphi_n$ yields
		\begin{align}
		\label{rel.entr.wk-n.mv}
		\begin{split}
		&\int\varphi_n(0)\left\la\bn,I(\lambda_{U_0}|\bar{U_0})\right\ra\:dx
		+\int_{0}^{t}\int\varphi'_n(\tau)\left[\left\la\bn,I(\lambda_{U}|\bar{U})\right\ra\:dx\:d\tau+\bg(dx\,d\tau)\right]\\
		&\geq-\int_{0}^{t}\int\varphi_n(\tau)\Big[-\partial_t\bar{\eta}\left\la\bn,\hat{\theta}(\Phi(\lf),\li|\Phi(\bar{F}),\bar{\eta})\right\ra\\
		&+\partial_t\Phi^B(\bar{F})\left\la\bn,\frac{\partial\hat{e}}{\partial \xi^B}(\Phi(\lf),\li|\Phi(\bar{F}),\bar{\eta})\right\ra\\
		&+\partial_{\alpha}\bar{v}_i\left\la\bn,\left(\frac{\partial\hat{e}}{\partial\xi^B}(\Phi(\lambda_{F}),\lambda_{\eta})
		-\frac{\partial\hat{e}}{\partial\xi^B}(\Phi(\bar{F}),\bar{\eta})\right)\left(\phl-\phb\right)\right\ra\\
		&+\partial_{\alpha}\left(\frac{\partial\hat{}}{\partial\xi^B}(\Phi(\bar{F}),\bar{\eta})\right)\!\left(\ph-\phb\right)\!\la\bn,(\lvi-\bar{v}_i)\ra\\
		&+\left\la\bn,\left(\frac{r}{\hat{\theta}(\Phi(\lambda_{F}),\li)}-\frac{r}{\hat{\theta}(\Phi(\bar{F}),\bar{\eta})}\right)
		(\hat{\theta}(\Phi(\lambda_{F}),\li)-\hat{\theta}(\Phi(\bar{F}),\bar{\eta}))\right\ra\Big]\:dx\,d\tau.
		\end{split}
		\end{align}
	
	Passing first to the limit $n \to \infty$ using the fact that $\bg\geq 0$ and subsequently passing to the limit $\varepsilon \to 0^{+}$ using the 
	estimates (\ref{bound6}), (\ref{bound7}),~(\ref{bound5}),~\eqref{bound8} and \eqref{bound101}, we arrive at
	\begin{align*}
	\int  \left\la\bn,I(\lv,\Phi(\lf),\li  \right.|&\left.   \bar{v},\Phi(\bar{F}),\bar{\eta})\right\ra\:dx\:dt \leq\\
	\leq C  &\int_0^t\int \left\la\bn,I(\lv,\Phi(\lf),\li| \bar{v},\Phi(\bar{F}),\bar{\eta})\right\ra\:dx\:d\tau\\
	&+\int \left\la\bn,I(\lv,\Phi(\lf),\li| \bar{v},\Phi(\bar{F}),\bar{\eta})\right\ra(x,0)\:dx
	\end{align*}
	for $t\in (0,T).$ Note that the constant $C$ depends only on the smooth bounded solution $\bar{U}.$ 
	Then Gronwall's inequality implies 
	\begin{align*} 
	\int  \left\la\bn,I(\lv,\Phi(\lf),\li\right.&   |\left. \bar{v},\Phi(\bar{F}),\bar{\eta})\right\ra\:dx\:dt \leq\\
	&\leq C_1 e^{C_2t}\int \left\la\bn,I(\lv,\Phi(\lf),\li| \bar{v},\Phi(\bar{F}),\bar{\eta})\right\ra(x,0)\:dx.
	\end{align*}
	Now note that $\gamma_0 = 0$ and since  $\nu_{(x,0)} = \delta_{U_0(x)}$ and the initial data are the same, $U_0 = \bar U_0$ at $t=0$, the right hand side vanishes. 
	The proof follows by (\ref{bound9}).
\end{proof}

{\bf Acknowledgement.}
The authors thank the anonymous referee for very helpful comments that helped considerably in improving this work.

%
%


\begin{thebibliography}{10}
	
	
		\bibitem{ab97}
{ J.J. Alibert and G. Bouchitt\'{e}}.
Non-uniform integrability and generalized Young measures,
{\em J. Convex Analysis}, 129-147
{\bf 4}  (1997).

\bibitem{MR1036070}
J.~M. Ball.
\newblock A version of the fundamental theorem for Young measures.
\newblock {\em PDEs and continuum models of phase transitions Nice}, (1988).

\bibitem{MR0475169}
J.~M. Ball.
\newblock Convexity conditions and existence theorems in nonlinear elasticity.
\newblock {\em Arch. Rational Mech. Anal.}, 63(4):337--403, (1976/77).


\bibitem{BCO81}
J.M. Ball, J.C. Currie, P.J. Olver,
\newblock Null Lagrangians, weak continuity and variational problems of arbitrary order,
\newblock {\em J. Funct. Anal.} {\bf 41}, 135-174 (1981).

%
%
%

\bibitem{MR2048567}
Y. Brenier.
\newblock Hydrodynamic structure of the augmented {B}orn-{I}nfeld equations.
\newblock {\em Arch. Ration. Mech. Anal.}, 172(1):65--91, (2004).


\bibitem{Cavalletti}
F. Cavalletti, M. Sedjro and M. Westdickenberg.
\newblock {A variational time discretization for compressible Euler equations.}
\newblock {Trans. Amer. Math. Soc. 371 (2019)}.

\bibitem{Callen}
H.B.~Callen.
\newblock {Thermodynamics and an Introduction to Thermostatistics.},
\newblock Wiley, New York, (1985).


%

\bibitem{CGT2017}
C.~Christoforou, M.~Galanopoulou and A.~Tzavaras.
\newblock A symmetrizable extension of polyconvex thermoelasticity and applications to zero-viscosity limits and  weak-strong uniqueness.
{\em Comm. Partial Differential Equations} {\bf 43},  1019-1050,  (2018).

\bibitem{CGT2018}
C.~Christoforou, M.~Galanopoulou and A.~Tzavaras.
\newblock Measure-valued solutions for the equations of polyconvex adiabatic thermoelasticity.
{\em Discrete Contin. Dyn. Syst.} {\bf 39}, 6175-6206 (2019).

\bibitem{christoforou2016relative}
C.~Christoforou and A.~Tzavaras.
\newblock Relative entropy for hyperbolic--parabolic systems and application to
the constitutive theory of thermoviscoelasticity.
{\em Arch. Rational Mech. Anal.} {\bf 229},(1) 1--52 (2018). 


\bibitem{CN1959}
{ B.D. Coleman and W. Noll}.
On the thermostatics of continuous media.
{\em Arch. Rational Mech. Anal.} { 4}:97--128, (1959). 


\bibitem{CN1963}
{ B.D. Coleman and W. Noll}.
The thermodynamics of elastic materials with heat conduction and viscosity.
{\em Arch. Rational Mech. Anal.} { 13} :167--178, (1963).



%
%
%
%

\bibitem{MR3468916}
C.~M. Dafermos.
\newblock {\em Hyperbolic conservation laws in continuum physics}, volume 325
of {\em Grundlehren der Mathematischen Wissenschaften [Fundamental Principles
	of Mathematical Sciences]}.
\newblock Springer-Verlag, Berlin, second edition, (2005).

%
%



\bibitem{MR1831179}
S.~Demoulini, D.~M.~A. Stuart, and A.~E. Tzavaras.
\newblock A variational approximation scheme for three-dimensional
elastodynamics with polyconvex energy.
\newblock {\em Arch. Ration. Mech. Anal.}, 157(4):325--344, (2001).

\bibitem{MR1831175}
S.~Demoulini, D.~M.~A. Stuart, and A.~E. Tzavaras.
\newblock Weak-strong uniqueness of dissipative measure-valued solutions for
polyconvex elastodynamics.
\newblock {\em Arch. Ration. Mech. Anal.}, 205(3):927--961, (2012).


\bibitem{dm87}
{ R.~J. DiPerna and A.~J. Majda}.
Oscillations and concentrations in weak solutions of the incompressible Euler equations,
{\em Commun. Math. Phys.} {\bf 108} (1987), 667-689.



\bibitem{Duan2017}
X. Duan.
\newblock Hyperbolicity of the time-like extremal surfaces in Minkowski spaces.
\newblock {\em arXiv preprint arXiv: 1706.04768}, (2017).

\bibitem{Evans}
L.~C. Evans, Entropy and Partial Differential Equations, UC Berkely, Lecture Notes.
\newblock available through https://math.berkeley.edu/$\sim$evans/entropy.and.PDE.pdf


\bibitem{Fosdick_Serrin}
{ R.~L. Fosdick and J. Serrin}.
Global properties of continuum thermodynamic processes.
{\em Arch. Rational Mech. Anal.} {\bf 59}::97--109, (1975).



\bibitem{MR0285799}
K.~O. Friedrichs and P.~D. Lax.
\newblock Systems of conservation equations with a convex extension.
\newblock {\em Proc. Nat. Acad. Sci. U.S.A.}, 68:1686--1688, (1971).


\bibitem{GSW2015}
{ P. Gwiazda, A. Swierczewska-Gwiazda and E. Wiedemann}.
Weak-strong uniqueness for measure-valued solutions of some compressible  fluid models.
{\em Nonlinearity}, { 28}:3873-3890, (2015).


\bibitem{LT2006}
{ C. Lattanzio and A.E. Tzavaras},
Structural properties of stress relaxation and convergence from viscoelasticity 
to polyconvex elastodynamics.
{\em Arch. Rational Mech. Anal.}, { 180}:449-492, (2006).


\bibitem{MR3148623}
{ A. Miroshnikov and A.E. Tzavaras},
A Convergence of variational approximation schemes for
elastodynamics with polyconvex energy
{\em Z. Anal. Anwend.}, 33, 43--64, (2014).


\bibitem{MR1741439}
P. Pedregal.
\newblock {\em Variational methods in nonlinear elasticity},
\newblock {Society for Industrial and Applied Mathematics (SIAM), Philadelphia, PA}, 2000.


\bibitem{MR1651340}
T. Qin.
\newblock Symmetrizing nonlinear elastodynamic system.
\newblock {\em J. Elasticity}, 50(3):245--252, 1998.


\bibitem{MR848765_Ser}
J.~Serrin.
\newblock  An outline of thermodynamical structure. 
\newblock In: {\it New perspectives in thermodynamics}
\newblock {\em Springer-Verlag, Berlin}, pp. 3-32, (1986).

\bibitem{MR1420151}
J.~Serrin.
\newblock The equations of continuum mechanics and the laws of thermodynamics.
\newblock {\em Meccanica}, Volume 31, pp. 547--563, (1996).

\bibitem{MR589945}
M.~\v{S}ilhav\'{y}.
\newblock On measures, convex cones, and foundations of thermodynamics.
\newblock {\em Czechoslovak J. Phys. B}, 30:961--991, (1980).

\bibitem{MR848765_Sil}
M.~\v{S}ilhav\'{y}.
\newblock Foundations of continuum thermodynamics
\newblock In: {\it New perspectives in thermodynamics}
\newblock {\em Springer-Verlag, Berlin}, pp. 33-48, (1986).

\bibitem{TN1992}
{ C. Truesdell and W. Noll}.
{\it  The non-linear field theories of mechanics}. Second edition.
Springer Verlag, Berlin (1992).

\bibitem{MR3232713}
A.~E. Tzavaras.
\newblock Stress relaxation models with polyconvex entropy in {L}agrangean and
{E}ulerian coordinates.
\newblock {\em Commun. Inf. Syst.}, 13(4):487--515, (2013).

\bibitem{Wagner2009}
D.~H. Wagner.
\newblock
Symmetric-hyperbolic equations of motion for a hyperelastic material. 
\newblock {\em J. Hyperbolic Differ. Equ.} 6:615--630, (2009).

\bibitem{Westdickenberg2017}
{M. Westdickenberg}.
\newblock
Euler-Lagrange equation for a minimization problem over monotone transport maps.
{\em Quart. Appl. Math.} 75 (2017).
\end{thebibliography}



\end{document}